\newmdenv[shadow=true,shadowcolor=lightblue,font=\sffamily,rightmargin=6pt]{shadedbox}
\DeclareSymbolFont{CMlargesymbols}{OMX}{cmex}{m}{n} %CM big ( )
\DeclareMathDelimiter{(}{\mathopen} {operators}{"28}{CMlargesymbols}{"00}
\DeclareMathDelimiter{)}{\mathclose}{operators}{"29}{CMlargesymbols}{"01}
\DeclareMathAlphabet\mathcal{OMS}{cmsy}{m}{n} %CM mathcal
\SetMathAlphabet\mathcal{bold}{OMS}{cmsy}{b}{n} %CM bold mathcal
\numberwithin{figure}{section}
\numberwithin{table}{section}
\newcommand{\ignore}[1]{}
\newcommand{\eq}[1]{\begin{equation}\label{#1}}
\newcommand{\en}{\end{equation}}
\def\RR{\mathbb{R}}
\def\CC{\mathbb{C}} 
\def\PP{\mathcal{P}}
\def\XX{\mathcal{X}}
\def\calL{\mathcal{L}}
\def\FF{\mathcal{F}}
\def\calB{\mathcal{B}} 
\newcommand{\K}{\mathcal{K}}
\newcommand{\epsalg}{$\epsilon$-algorithm\xspace}
\def\eps{\epsilon}
\def\emphc#1{\textcolor{red}{#1}}
\def\emphk#1{\textcolor{black}{\textsl{#1}}}
\def\gbox#1{\colorbox{lightblue}{\textcolor{red}{\textsl{ #1 }}\,}} 
\def\lt{{\tt <}}%
\def\Span{\mbox{Span}}
\def\nref#1{(\ref{#1})}
\def\inv{^{-1}}
\def\up#1{^{(#1)}}%
\definecolor{lightblue}{rgb}{0.72,0.72,1.0}
\def\dwn#1{_{(#1)}}%
\def\half{\frac{1}{2}}
\def\grad{\nabla} 
\def\betab{\vspace{-5pt}\begin{tabbing}
xx\=xx\=xx\=xx\=xx\=xx\=xx\=xxx\= \kill} 
\def\entab{\end{tabbing}\vspace{-3pt}}
\newtheorem{corollary}{Corollary}
\newtheorem{proposition}{Proposition}[section]
\newtheorem{theorem}{Theorem}[section]
\newtheorem{lemma}{Lemma}[section]
\title{Acceleration methods for fixed point iterations}
\author{Yousef Saad
  \thanks{
  Department of Compter Science and Eng., University of Minnesota,
  Twin cities, MN 55455, USA. 
E-mail: {saad@umn.edu}} }
\begin{document}

\label{firstpage}
\maketitle

\begin{abstract}

  A pervasive approach in scientific computing is to express the solution to a given
  problem as the limit of a sequence of vectors or other mathematical objects.
  In many situations these sequences are generated by
  slowly converging iterative procedures and this led practitioners to seek
  faster alternatives to reach the limit.  ``Acceleration
  techniques'' comprise a broad array of methods specifically designed with this
  goal in mind.  They started as a means of improving the convergence of general
  scalar sequences by various forms of ``extrapolation to the limit'', i.e., by
  extrapolating the most recent iterates to the limit via linear
  combinations. Extrapolation methods of this type, the best known example of
  which is Aitken's Delta-squared process, require only the sequence of vectors
  as input.

  However, limiting methods to only use the iterates is too
  restrictive. Accelerating sequences generated by fixed-point iterations by
  utilizing both the iterates and the fixed-point mapping itself has proven
  highly successful across various areas of physics. A notable example of these
  Fixed-Point accelerators (FP-Accelerators) is a method developed by
  Donald Anderson in 1965 and now widely known as Anderson Acceleration (AA). Furthermore,
  Quasi-Newton and Inexact Newton methods can also be placed in this category
  since they can be invoked to find limits of fixed-point iteration
  sequences by employing the exact same ingredients as those of the
  FP-accelerators.
%% Furthermore, the field of acceleration includes
%% an even broader range of techniques, such as certain Quasi-Newton and
%% Inexact Newton methods.
      
  This paper presents an overview of these methods -- with an emphasis on those,
  such as AA, that are geared toward accelerating fixed-point iterations.  We
  will navigate through existing variants of accelerators, their implementations,
  and their applications,   to unravel the close connections between them.
  These   connections were often not recognized by the
  originators of certain methods, who sometimes stumbled on slight variations of
  already established ideas.  Furthermore, even though new accelerators were
  invented in different corners of science, the underlying principles behind them are
  strinkingly similar or identical.
 
  The plan of this article will approximately follow the historical trajectory
  of extrapolation and acceleration methods, beginning with a brief description of
  extrapolation ideas, followed by the special case of linear systems, the
  application to self-consistent field (SCF) iterations, and a detailed view of
  Anderson Acceleration.  The last part of the paper is concerned with more recent
  developments, including theoretical aspects, and a few thoughts on accelerating
  Machine Learning algorithms. 
\end{abstract}

\paragraph{Keywords:}
Acceleration techniques;
 Extrapolation techniques;
 Aitken's delta-squared;
Sequence transformations;
 Anderson Acceleration;
 Reduced Rank Extrapolation;
Krylov Subspace methods;
 Chebyshev acceleration;
 Quasi-Newton methods;
 Inexact Newton;
  Broyden methods.

\paragraph{AMS subject classifications:} 65B05, 65B99, 65F10, 65H10.

\tableofcontents 

%% \section{Acceleration and extrapolation techniques:  A historical perspective}
\section{Historical perspective and overview}
Early iterative methods for solving systems of equations, whether linear or
nonlinear, often relied on simple \emph{fixed-point iterations} of the form
\eq{eq:FixedPt} x_{j+1} = g(x_j) \en which, under certain conditions, converge to a
\emph{fixed-point} $x_*$ of $g$, i.e., a point such that $g(x_*) = x_*$.  Here,
$g$ is some mapping from $\RR^n$ to itself which we assume to be at least
continuous.  Thus, the iterative method for solving linear systems originally
developed by Gauss in 1823 and commonly known today as the Gauss-Seidel
iteration, see, e.g., \cite{Saad-book2},
can be recast in this form.  The fixed-point iterative approach can
be trivially adopted for solving a system of equations of the form: \eq{eq:fx}
f(x) = 0, \en where $f$ is again a mapping from $\RR^n$ to itself.  This can be
achieved by selecting a non-zero scalar $\gamma$ and defining the mapping
$g(x) \equiv x + \gamma f(x)$, whose fixed-points are identical with the zeros
of $f$.  The process would generate the iterates:
\eq{eq:FixedPt1} x_{j+1} = x_j + \gamma f(x_j) , \quad j=0,1, \cdots \en
starting from some initial guess
$x_0$.  Approaches that utilize the above framework are common in optimization
where $f(x)$ is the negative of the gradient of a certain objective function $\phi(x)$ whose
minimum is sought.  The simplicity and versatility of the fixed-point iteration
method for solving nonlinear equations are among the reasons it
has been successfully used across various fields. 

Sequences of vectors, whether of the type \eqref{eq:FixedPt} introduced above, or
generated by some other `black-box' process, often converge slowly or may even
fail to converge.  As a result practitioners have long sought to build sequences
that converge faster to the same limit as the original sequence or even to
establish convergence in case the original sequence fails to converge.
%% develop techniques to `accelerate convergence'.
We need to emphasize a key distinction between two different strategies that
have been adopted in this context.  In the first, one is just given all, or a
few of the recent members of the sequence and no other information and the goal
is to produce another sequence from it, that will hopefully converge faster.
These procedures typically rely on forming a linear combination of the current
iterate with previous ones in an effort to essentially extrapolate to the limit and
for this reason they are often called \emph{`extrapolation methods'.}  Starting
from a sequence $\{ x_j \}_{j=0,1,...}$ a typical extrapolation technique builds the new,
extrapolated, sequence as follows:
\eq{eq:extrap} y_j = \sum_{i=[j-m]}^j
\gamma_i\up{j} x_i \en
where we recall the notation $[j-m] = \max \{j-m,0 \}$ and $m$ is a parameter,
known as the `depth' or `window-size', and the $\gamma_i\up{j}$'s are
`acceleration parameters'. If the new sequence is to converge to the same
limit as the original one, the condition
\eq{eq:cond1} \sum_{i=[j-m]}^j
\gamma_i\up{j} = 1 \en must be imposed. Because the process works by forming linear
combinations of iterates of the original sequences, it is also often
termed a \emph{`linear acceleration'} procedure, see, e.g.,
\cite{Brezinski-RZ-book,BrezinskiReview00,Forsythe53} but the term
`extrapolation' is more common.  Extrapolation methods are
 discussed in Section~\ref{sec:extrapol}.

In the second strategy, we are again given all,  or a few,  of
the recent iterates, but now we also have access to the fixed-point mapping $g$
to help build the  next member of the sequence. A typical example of these methods
is the Anderson/Pulay acceleration which is discussed in detail in
Sections~\ref{sec:AA1} and~\ref{sec:AA2}.
When presented from the equivalent form of Pulay mixing, this method builds a new iterate  as follows
\eq{eq:pulay0} 
x_{j+1} =  \sum_{i=[j-m]}^j \theta_i\up{j}  g(x_{i}) ; 
\en
where the $\theta_i\up{j} $'s satisfy the constraint $ \sum_{i=[j-m]}^j \theta_i\up{j} = 1$.
As can be seen, the function $g$ is now invoked when building a new iterate.
Of course having access to $g$ may allow one to develop more powerful methods than
if we were to only use the iterates as is done in extrapolation methods.
It is also clear that there are  situations when this is not practically feasible.

%% We will briefly cover extrapolation techniques in Section~\ref{sec:extrapol} but
%% this paper is  concerned more about
The emphasis of this paper is on 
this second class which we will refer to
as \emph{Fixed-Point acceleration methods}, or \emph{FP-Acceleration methods}.
Due to their broad applicability, FP-acceleration  emerged from
different corners of science.
Ideas related to extrapolation can be traced as   far back as the 17th century,
see  \cite[sec. 6]{BrezinskiGenesis2019} for references. 
However, one can say that modern extrapolation and acceleration ideas took off
with the work of \cite{Richardson1910} and \cite{Aitken},
among others, in the early part of 20th century and then gained importance
toward the mid 1950s, with the advent of electronic computers, e.g.,
\cite{Romberg1955,Shanks55}.

\begin{figure}[h]
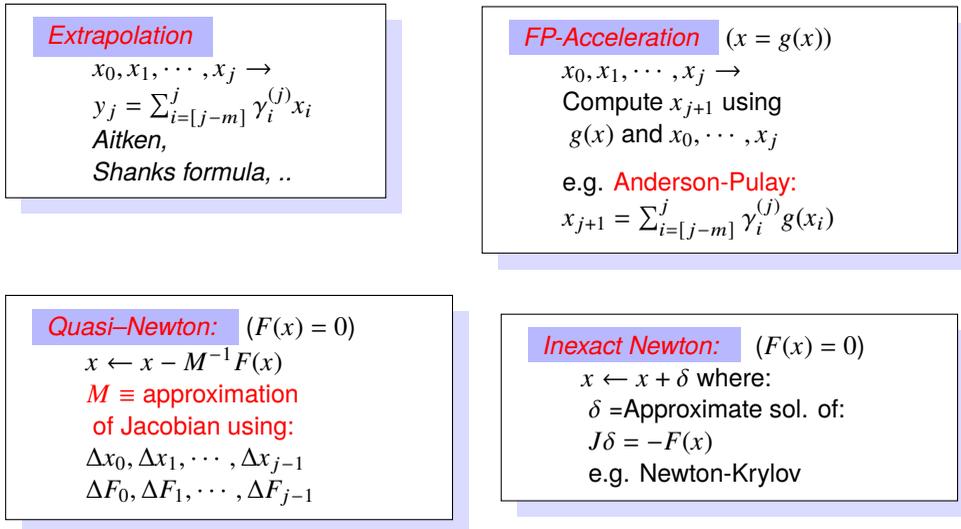

\noindent
\begin{minipage}{0.4\textwidth}
    \begin{shadedbox}      
    {\gbox{Extrapolation}}

\rule{20pt}{0pt} $x_0, x_1, \cdots, x_j  \rightarrow$

%% \rule{20pt}{0pt}  $t_k^{(j)}, j=0,1, 2, ..$
\rule{20pt}{0pt}  $y_j = \sum_{i=[j-m]}^j \gamma_i\up{j} x_i$ 

\rule{20pt}{0pt}  \emphk{Aitken, \\
\rule{20pt}{0pt}  Shanks  formula, ..}
\end{shadedbox}
 \end{minipage}
\hfill 
\begin{minipage}{0.5\textwidth}
    \vspace{.3in} 
  \begin{shadedbox}
  \gbox{FP-Acceleration} {$(x = g(x))$} \\
\rule{20pt}{0pt}$x_0, x_1, \cdots, x_j  \rightarrow$

\rule{20pt}{0pt}Compute  $x_{j+1}$ using \\
\rule{20pt}{0pt} $g(x)$  and $x_0,\cdots, x_j$

\medskip
\rule{20pt}{0pt}e.g. \emphc{ Anderson-Pulay:} 

\rule{20pt}{0pt}$x_{j+1} = \sum_{i=[j-m]}^j \gamma_i\up{j} g(x_i) $
       \end{shadedbox}
  \end{minipage}

  \vspace{0.2in}
  \noindent
  \begin{minipage}{0.47\textwidth}
    \begin{shadedbox}
\gbox{Quasi--Newton:} {($F(x) = 0$) }

\rule{20pt}{0pt}$x \gets x- M^{-1} F(x)$ 

\rule{20pt}{0pt}\emphc{  $M  \equiv$ approximation \\
  \rule{20pt}{0pt} of Jacobian using:}

\rule{20pt}{0pt}$\Delta x_0, \Delta x_1,\cdots, \Delta x_{j-1}$\\
\rule{20pt}{0pt}$\Delta F_0, \Delta F_1,\cdots, \Delta F_{j-1}$
\end{shadedbox} 
\end{minipage}\hfill \hfill 
  \begin{minipage}{0.48\textwidth}
      \begin{shadedbox}
        \gbox{Inexact Newton:}
        { ($F(x)=0$)}

        \rule{20pt}{0pt}$x \gets x + \delta $ where: 
        
        \rule{20pt}{0pt} $\delta$ =Approximate sol. of: 

        \rule{20pt}{0pt} $J \delta = - F(x)$ 

        \rule{20pt}{0pt} e.g. Newton-Krylov
    \end{shadedbox}
  \end{minipage}
  \caption{Four distinct classes of acceleration methods}\label{fig:groups}
\end{figure}

%% \begin{figure}[H]
%%    \centerline{\includegraphics[width=0.7\textwidth]{groups1}}
%%   \caption{Three classes of acceleration methods}\label{fig:groups}
%%   \end{figure}

%%% PUT EARLIER 

Because acceleration and extrapolation methods are often invoked for solving
nonlinear equations, they naturally compete with ``second-order type methods''
such as those based on Quasi-Newton approaches.  Standard Newton-type techniques
cannot be placed in the same category of methods as those described above
because they exploit a local second order model and invoke the differential of
$f$ explicitly.  In many practical problems, obtaining the Jacobian of $f$ is
not feasible, or it may be too expensive.  However, inexact Newton and
quasi-Newton methods,  are two alternatives that bypass the need to compute the
whole Jacobian $J$ of $f$.
%%Instead, they proceed by building a rough approximation
%%to $J$ or to its inverse, by exploiting information from the previous iterates.
Therefore, they satisfy our requirements for what can be viewed as a method for
accelerating iterations of the form \nref{eq:FixedPt1}, in that they only invoke
previous iterates and our ability to apply the 
mapping $g$ (or $f$) to any given vector.
There should therefore be no
surprise that one can find many interesting connections between this class
methods and some of the FP-acceleration techniques.  A few of the methods
developed in the quasi-Newton context bear strong similarities, and are in some
cases even mathematically equivalent, to techniques from the FP acceleration class.

The four classes of methods discussed above are illustrated in
Figure~\ref{fig:groups}.
This article aims to provide a coverage of these four classes of acceleration
and interpolation methods, with an emphasis on those geared toward accelerating
fixed-point iterations, i.e., those in the top-right corner of
Figure~\ref{fig:groups}.  There is a vast amount of literature on these methods
and it would be challenging and  unrealistic to try to be
exhaustive. However, one of our specific goals is to unravel the various
connections between the different methods.  Another is to present, whenever
possible,  interesting variants of these methods and details on their
implementations.  

%% \medskip\noindent \textbf{Notation} 

\subsubsection*{Notation}
%% ADD notation as it becomes necessary
%%* We will often use $g(x)$, a mapping from $\RR^n$ to itself, as a
%%fixed-point mapping. 
%%The problem can also be to  seek  a zero of a function
%%  A fixed-point of $g$ or a zero of $f$ will denoted by $x_*$

\begin{itemize}
  \item 
Throughout the article $g$ will denote a mapping from $\RR^n $ to itself, and
one is interested in a fixed-point $x_*$
of $g$. Similarly, $f$ will denote also a
mapping from $\RR^n $ to itself, and one is interested in a zero of $f$.
\item  $\{ x_j \}_{j=0,1,\cdots,}$ denotes a sequence in $\RR^n$.

  \item
Given a sequence $\{x_j\}_{j = 0,1,\cdots,}$ the forward difference operator $\Delta$
builds a new sequence whose terms are defined by:
\eq{eq:Deltaxj}
\Delta x_j = x_{j+1} - x_j, j=0,1, \cdots,
\en

\item We will often refer to an evolving set of columns
  where the most recent $m$ vectors from a sequence are retained.
  In order to cope with the different indexing used in the algorithms,
  we found it convenient to define for any $k\in \mathbb{Z}$ 
  \eq{eq:mj}
  [k] = \max \{k,0 \}
  \en
Thus, we will often see matrices of the form
$X_{j} = [x_{[j-m+1]}, x_{[j-m+1]+1},\cdots, x_{j}]$ that have 
$j+1$ columns when $j<m$ and $m$ columns otherwise. 

\item Throughout the paper $\| . \|_2$ will denote the Euclidean norm
  and $\| . \|_F$ is the Frobenius norm of matrices.

\end{itemize}

\section{Extrapolation methods for general sequences}\label{sec:extrapol}
Given a sequence $\{ x_j \}_{j=0,1,\cdots,}$,  an extrapolation method builds
an auxiliary sequence $\{ y_j \}_{j=0,1,\cdots,}$, where $y_j$ is typically a linear
combination of the most recent iterates as in 
\nref{eq:extrap}.  The goal is to produce a sequence that converges faster to
the limit of $x_j$. Here $m$ is a parameter 
 known as the `depth' or `window-size', and the
$\gamma_i\up{j}$'s are `acceleration parameters', which sum-up to one, see
\eqref{eq:cond1}. 
Aitken's $\delta^2$ process \cite{Aitken} is 
an early instance of such a procedure that had  a major impact.
%% =======================================================================
 
\subsection{Aitken's procedure}
Suppose we have a scalar sequence $\{ x_i \}$ for $i=0,1,\cdots,$ that converges
to a limit $x_*$. 
Aitken  assumed that in this situation the sequence roughly satisfies the relation
%%converges to a limit $x_* $, then  it 
\eq{eq:Aitken0}
x_{j+1} - x_*= \lambda (x_{j} - x_*) \quad {\forall j}
\en
where $\lambda$ is some unknown scalar.
This is simply an expression of a geometric convergence to the limit.
The above condition defines a set of sequences and the condition is termed
a `kernel'. In this particular case  \nref{eq:Aitken0} is the Aitken kernel.

The scalar     $\lambda, $ and the limit $x_*$  can be trivially determined from
three consecutive iterates $x_{j}, x_{j+1}, x_{j+2}$   by writing:
\eq{eq:ansatz}
\frac{x_{j+1} - x_*}{x_{j} - x_*} = \lambda, \quad
\frac{x_{j+2} - x_*}{x_{j+1} - x_*} = \lambda 
\en
from which it follows by eliminating $\lambda$ from the two equations that 
\begin{equation}  \label{eq:Aitken1}
%%%\lambda & = \frac{x_{j+2} - x_{j+1}}{x_{j+1} - x_j} \\ 
x_*  = \frac{x_j x_{j+2} - x_{j+1}^2}
{x_{j+2} - 2 x_{j+1} +x_j} 
= x_j - \frac{(\Delta x_j)^2}{\Delta^2 x_j}  .
\end{equation}
%%\begin{align}
%% \lambda & = \frac{x_{j+2} - x_{j+1}}{x_{j+1} - x_j} \\ 
%% x_* & = \frac{x_j x_{j+2} - x_{j+1}^2}
%% {x_{j+2} - 2 x_{j+1} +x_j} 
%% = x_j - \frac{(\Delta x_j)^2}{\Delta^2 x_j}   \label{eq:Aitken1}
%% \end{align}

Here $\Delta $ is the forward difference operator defined earlier in \nref{eq:Deltaxj}
and  $\Delta^2 x_j = \Delta (\Delta x_j)$.
As can be expected, the set of sequences that satisfy
Aitken's kernel, i.e., \nref{eq:Aitken0} is very narrow. In fact, subtracting
the same relation obtained by replacing $j$ by
$j-1$ from  relation \nref{eq:Aitken0}, we would obtain $ x_{j+1} - x_{j} = \lambda (x_{j} - x_{j-1})$,
hence $x_{j+1} - x_{j} = \lambda^{j} (x_1 - x_0) $. Therefore, scalar
sequences that satisfy Aitken's kernel exactly are of the form
\eq{eq:AitkenEx}
x_{j+1} = x_0 + (x_1 - x_0) \sum_{k=0}^j \lambda^k \quad \text{for} \quad j \ge
0.
\en

%% ....

Although a given sequence is  unlikely to satisfy Aitken's kernel exactly, 
it may nearly satisfy it when approaching the limit and in this case,
the extrapolated value \nref{eq:Aitken1} will provide a way to build an `extrapolated' sequence defined as follows: 
\eq{eq:Aitken2} 
y_j  = x_j - \frac{(\Delta x_j)^2}{\Delta^2 x_j} \quad j=0,1, \cdots, 
\en
Note that to compute $y_j $ we must have available three consecutive iterates,
namely, $x_{j},x_{j+1},x_{j+2}$, so $y_j$  starts with a delay relative to the
original sequence.

A related approach known as Steffensen's method is geared toward solving the  equation
$f(x)=0$ by the Newton-like iteration:
\eq{eq:steff}
x_{j+1} = x_j - \frac{f(x_j)}{d(x_j)} \quad \mbox{where}\quad d(x) = \frac{f(x+ f(x))-f(x)}{f(x)} . 
\en
One can recognize in $d(x)$ an approximation of the derivative of $f$ at
$x$. This scheme converges quadratically under some smoothness assumptions for
$f$. In addition, it can be easily verified, that when the sequence $\{x_j\}$ is
produced from the fixed-point iteration $x_{j+1}=g(x_j)$, one can recover
Aitken's iteration for this sequence by applying Steffensen's scheme to the
function $f(x) = g(x)-x$. Therefore, Aitken's method also converges
quadratically for such sequences when $g$ is smooth enough.

\iffalse
This idea of Aitken extrapolation was successfully
accelerating numerical integration \cite{Romberg1955}.

SEE BREZINSKI:

 ROMBERG+RICHARDSON-EXTRAPOLATION HAVE NOTHING TO DO WITH AITKEN

 AITKEN WAS KNOWN [INDEPENDENTLY] BY JAPANESE 17TH CENTURTY 

 Richardson developed extrapolation methods for ...

 romberg adapted this for integration ...
%% 

 \fi
 
%%% paper by Shanks appeared which aimed to generalize Aitken's process\cite{Shanks55}.

 \subsection{Generalization: Shanks transform and the $\epsilon$-algorithm}
 Building on the  success of Aitken's acceleration procedure, 
 \cite{Shanks55} explored ways to generalize it by replacing
 the kernel~\ref{eq:Aitken0} with  a kernel of the form:

%%proposed process is of the form  {$a_1 (x_j - x_*) + a_2 (x_{j+1}-x_*) = 0 \ \forall n$ }
%% \bigskip   What if we generalize this to 

\eq{eq:shanksKernel} 
a_0 (x_j - x_*) + a_1 (x_{j+1} - x_*) + \cdots + a_m (x_{j+m} - x_*)  = 0 .
\en
where the scalars $a_0, \cdots, a_{m}$ and the limit $x_*$ are unknowns. The
sum of the scalars $a_i$  cannot be equal to zero and there is no loss of generality
in assuming that they add up to 1, i.e.,
\eq{eq:shanks2}
a_0 + a_1 + \cdots + a_m  = 1.
\en
In addition, it is commonly assumed that $a_0 a_m \ne 0$, so that exactly $m+1$
terms are involved at any given step. 
We can set-up a linear system to compute $x_*$ by putting equation
\nref{eq:shanksKernel} and with \nref{eq:shanks2} together into the
 $(m+2) \times (m+2) $ linear system: 
\begin{equation}
  \left\{
\begin{array}{lllllllllllll} 
  a_0         &+& a_1           &+& \cdots      &+& a_m            & &      &=& 1 & \\
  a_0 x_{j+i} &+& a_1 x_{j+i+1} &+& \cdots      &+& a_m x_{j+i+m}  &-& x_*  &=& 0 &
   \quad  i=0,\cdots,m  . 
\end{array} \right.
\end{equation} 
Cramer's rule can now be invoked to solve this system and derive a formula for  $x_*$.
With a few row manipulations, we end up with the following formula known as 
\emph{
 the Shanks (or 'Schmidt-Shanks') Transformation for scalar sequences}:
\eq{eq:shanksFormula}
y_j\up{m}  =  
\frac{
\left| \begin{array}{cccc} 
x_j           &      x_{j+1}  & \cdots & x_{j+m} \cr
 \Delta x_j & \Delta x_{j+1}  & \cdots & \Delta x_{j+m} \cr 
\vdots        &  \vdots       &  \vdots  &  \vdots     \cr
\Delta x_{j+m-1}  & \Delta x_{j+m} & \cdots & \Delta x_{j+2m-1}
\end{array} 
\right|} 
{
\left| \begin{array}{cccc} 
 1           &    1      & \cdots & 1 \cr
 \Delta x_j & \Delta x_{j+1}  & \cdots & \Delta x_{j+m} \cr 
\vdots        &  \vdots       &  \vdots  &  \vdots     \cr
\Delta x_{j+m-1}  & \Delta x_{j+m} & \cdots & \Delta x_{j+2m-1}
\end{array} 
\right|} 
\en

%%% NEW
A more elegant way to derive the above formula, one that will lead to
useful extensions, is to exploit the  following relation which follows from
the kernel~\nref{eq:shanksKernel}
\eq{eq:shanksKernel2}
a_0 \Delta x_j + a_1 \Delta x_{j+1} + \cdots + a_m \Delta x_{j+m}   = 0 .
\en
With this, we will build  a new system, now 
for the unknowns $a_0, a_1, \cdots, a_m$, as follows:

\begin{equation}
  \left\{
\begin{array}{lllllllllllll} 
  a_0         &+& a_1           &+& \cdots      &+& a_m            & =& 1 & \\
  a_0 \Delta x_{j+i} &+& a_1 \Delta  x_{j+i+1} &+& \cdots      &+& a_m
  \Delta x_{j+i+m}  & =& 0 & ; \ 
  i=0,\cdots,m-1 .                               
%% \   0 \le i \le m-1 
\end{array} \right. 
\label{eq:Kernel3} 
\end{equation} 
The right-hand side of this $(m+1)\times (m+1)$ system is the vector
$e_1 = [1, 0, \cdots, 0]^T \ \in \ \RR^{m+1}$.  Using Kramer's rule will yield
an expression for $a_k $ as the fraction of 2 determinants. The denominator of
this fraction is the same as that of \nref{eq:shanksFormula}. The numerator is
$(-1)^k$ times the determinant of that same denominator where the first row
and the $(k+1)$-st column are deleted.  Substituting these formulas for $a_k$ into
the sum $a_0 x_j + a_1 x_{j+1} + \cdots + a_m x_{j+m}$ will result in an
expression that amounts to expanding the determinant in the numerator of
\nref{eq:shanksFormula} with respect to its first row.
By setting $a_i \equiv \gamma_i\up{m}$ we can therefore rewrite  Shank's formula
in the form: 
\eq{eq:extrapolGen}
y_j\up{m} = \gamma_0\up{m} x_j + \gamma_1\up{m} x_{j+1} + \cdots + \gamma_m\up{m} x_{j+m}
\en
where each $\gamma_j\up{m}$ is  the ratio of two determinants, as was just explained.

It can be immediately seen that the particular case $m=1$ yields exactly
 Aitken's $\delta^2$ formula: 
\[ 
y_j^{(2)} =  
\frac{
\left| \begin{array}{cc} 
x_j           &      x_{j+1} \cr
 \Delta x_j & \Delta x_{j+1} 
\end{array} 
\right|} 
{
\left| \begin{array}{cccc} 
 1           &    1    \cr 
 \Delta x_j & \Delta x_{j+1}  
\end{array} 
\right|}  =
\frac{x_j \Delta x_{j+1} - x_{j+1} \Delta x_j}{ \Delta^2 x_j } 
= x_j - \frac{(\Delta x_j)^2}{ \Delta^2 x_j } . 
\]
As written, the above expressions are meant for scalar sequences,
but they can be generalized to vectors and this will be discussed later.

The generalization discovered by Shanks relied on ratios of determinants of size
of order $m$ where $m$ is the depths of the recurrence defined above.  The
non-practical character of this technique prompted Peter Wynn in a 1956 article
to explore an alternative implementation and this resulted in an amazingly
simple formula which he dubbed the `$\epsilon$-algorithm' \cite{Wynn}.  This
remarkable discovery spurred a huge following among the numerical linear algebra
community
\cite{CabayJackson,EddyWang,Brezinski,rre,rrem,sidi-al-86,Wynn62,Brezinski-TEA,KanielStein,Brezinski-mmpe,Jbilou-Sbeta,JbilouSadok91,bgb}
among many others.  The article~\cite{BrezinskiReview00} gives a rather
exhaustive review of these techniques up to the year 2000. The more recent
article \cite{BrezinskiGenesis2019}  surveys these methods while
also providing a wealth of information on the history of their development.

So what is Wynn's procedure to compute  the  $y_j\up{m}$?
The formula given above leads to expressions with 
determinants that involve Hankel matrices from which some  recurrences can be obtained
but these are not only complicated but also  numerically unreliable.

Wynn's  $\epsilon$-algorithm  is a recurrence relation to compute $y_j\up{m}$ 
shown in Equation~\nref{eq:epsalg}. It defines 
 sequences $\{\eps_j\up{m}\}_{j=0,1,\cdots,}$ each
 indexed by an integer \footnote{
   Note that this article adopts a different notation from
   the common usage in the literature: 
  the index of the sequence is a subscript rather than a superscript.
  In Wynn's notation $\{\eps_m\up{j}\}_{j=0,1,\cdots}$ denotes the $m$-th extrapolated sequence.}
$m$ starting with $m=-1$.

 The sequence $\{ \eps_{j}\up{-1}\}$, is just the zero sequence
$ \eps_{j}\up{-1} = 0 , \forall \ j$, while the sequence $\{ \eps_j\up{0}\}$, is
the original sequence $\eps_j\up{0} = x_j, \forall j$.
The other sequences, i.e.,  $\{\eps_j\up{m}\} $ are then obtained
recursively from as follows: 
\eq{eq:epsalg}
\epsilon_{j}\up{-1} = 0; \quad \epsilon_{j}\up{0} = x_j; \quad
\epsilon_{j}\up{m+1} = \epsilon_{j+1}\up{m-1} + \frac{1}{ \epsilon_{j+1}\up{m} -
  \epsilon_{j}\up{m} } \quad \mbox{for} \ m, j \ge 0 .
\en Each sequence
$\{\eps_j\up{m}\}$, where $m$ is constant,  can be placed on a vertical line on a grid as shown in
Figure~\ref{fig:rhombus}. With this representation, the $j$-th iterate for the
$(m+1)$-st sequence can be obtained using a simple rhombus rule from two members
of the $m$-th sequence and one member of the $(m-1)$-st sequence as is shown in
the figure.  Only the even-numbered sequences are accelerations of the original
sequence. The odd-numbered ones are just intermediate auxiliary sequences.
For example, the sequence $\eps_2\up{m}$ is identical with  the sequence obtained from Aitken's
$\delta^2$ process but the sequence $\eps_1\up{m}$ is auxiliary.
 
\begin{figure}[h]
\centerline{
\includegraphics[width=0.4\textwidth]{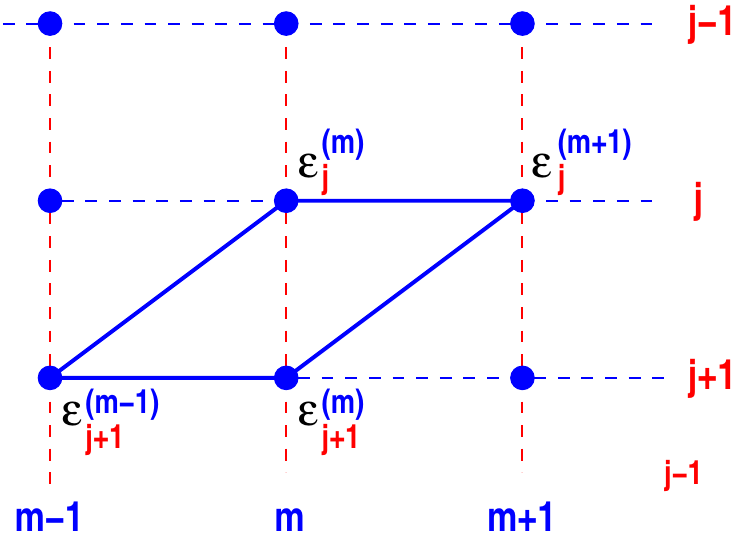}}
\caption{Wynn's rhombus rule for the $\epsilon$-algorithm}\label{fig:rhombus}
\end{figure} 

%% \begin{align*}
%%  \  \eps_{2m} &= e_m(x_j)    & & \>   \text{Shanks transform of} x_j , &  \\
%%   \eps_{2m+1} &= 1 / [e_m(x_j)]   & &  \text{Auxilliary variables}  &
%% \end{align*}

%% move!

\begin{figure}[h]
\centerline{
\includegraphics[width=0.8\textwidth]{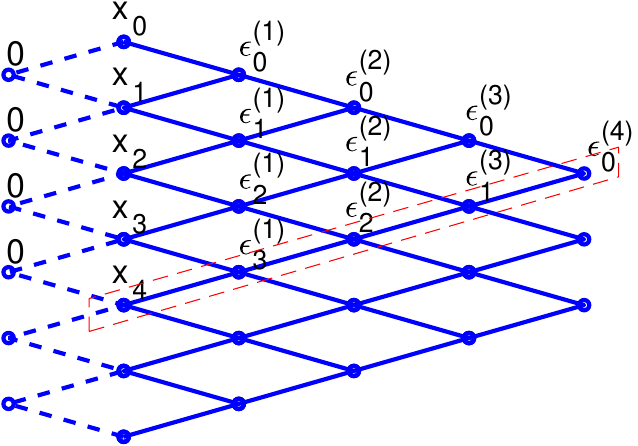} } 
\caption{Computational diagram of the $\epsilon$ algorithm.}\label{fig:epsDiagrm}
\end{figure}

If we denote by $\eta_m$  the transformation that maps a sequence $\{x_j\}$
into $\{t_j\up{m}\}$, i.e., we have $\eta_m(x_j) = t_j\up{m}$, then it can be shown that 
$\eps_j\up{2m} = \eta_m (x_j) $ and
$\eps_j\up{2m+1} = 1/  \eta_m (\Delta x_j ) $.

From a computational point of view, we would proceed differently in order not to
have to store all the sequences. The process, shown in
Figure~\ref{fig:epsDiagrm}, works on diagonals of the table.  When the original
sequence is computed we usually generate $x_0, x_1, \cdots, x_j, ..$ in this order.
Assuming that we need to compute up to the $ m$-th sequence $\{\eps_j\up{m}\}$
then once $\eps_j\up{0}=x_j$ becomes available we can obtain the entries
$\eps_{j-1}\up{1}, \eps_{j-2}\up{2}, \cdots, \eps_{j-m}\up{m}$.  This will
require the entries
$\eps_{j-1}\up{0}, \eps_{j-2}\up{1}, \cdots, \eps_{j-m}\up{m-1}$ and so we will
only need to keep the previous diagonal.

Wynn's result on the equivalence between the \epsalg and Shank's
formula is stunning not only for its elegance but also because it is highly
nontrivial and complex to establish, see comments regarding this in 
\cite[p. 162]{Brezinski}.
\iffalse
had to say regarding this:
Here is what \cite[p. 162]{Brezinski} had to say regarding this:
%%'s book [Pade-type approximation and general
%%ortho. polynomials, '80, p. 162]:
\emph{``Wynn's proof of this equivalence between 
the $\eps$ algorithm and Shanks transformation was quite tedious;
it involved very unusual properties of determinants. 
The same proof has now been achieved without any special knowledge
of these properties but, of course (...) a proof is always
easier when both the starting point and the end point are known''}

\fi 

\subsection{Numerical illustration}
Extrapolation methods were quite popular for dealing with scalar 
sequences such as those originating  from numerical integration, or from
computing  numerical series.
The following illustration highlights the difference between fixed-point
acceleration and extrapolation.

In the  example we compute $\pi$ from the Arctan expansion: 
\eq{eq:atanz}
\text{atan}(z) = z - \frac{z^3}{3} + \frac{z^5}{5} - \frac{z^7}{7} + \cdots
= \sum_{j=0}^\infty \frac{ (-1)^{j} z^{2j+1}}{2j+1} .
\en
Applying this to $z=1$ will give a sequence that converges to $\pi/4$.
For an arbitrary $z$, we would  take the following sequence, starting with $x_0 = 0$:
\eq{eq:atanz0}
 x_{j+1} = x_j + \frac{(-1)^j z^{2j+1}}{2 j+1}, \quad   j=0,1,\cdots, n-1.
 \en
 In this illustration, we take $n=30$ and $z=1$.

 \begin{figure}[H]
   \centerline{
     \includegraphics[width=0.6\textwidth]{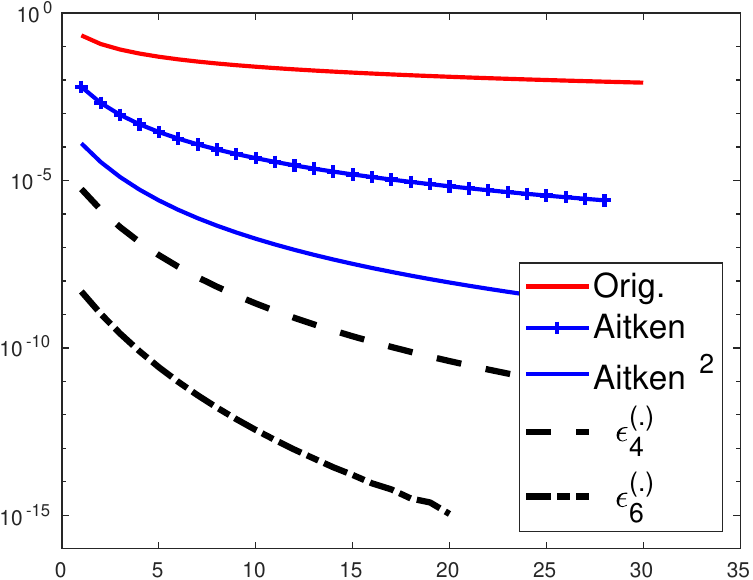}
   }
   \caption{Comparison of a few extrapolation methods for computing $\pi/4$ with Formula~\nref{eq:atanz0}.}\label{fig:comparExtrap}
\end{figure}

Figure~\ref{fig:comparExtrap} shows the convergence of the original sequence
along with 4 extrapolated sequences.  In the figure, Aitken$^2$ refers to Aitken
applied twice, i.e., Aitken is applied to the extrapolated sequence obtained
from applying the standard Aitken process. Note that all extrapolated sequences
are shorter since, as was indicated above, a few
iterates from the original sequence are needed in order to get the
new sequence  started. The plot shows a
remarkable improvement in convergence relative to the original sequence: 15
digits of accuracy are obtained for the 6-th order $\epsilon$-algorithm in 20
steps in comparison to barely 2 digits obtained with 30 steps of the original
sequence.

%% =======================================================================

\subsection{Vector sequences}\label{sec:vecs} 
Extrapolation methods have been generalized to vector sequences in a number of
ways. There is no canonical generalization that seems compelling and natural
but the simplest  consists of applying the acceleration procedure
component-wise. However, this naive approach is not recommended in the literature as
it fails to capture the intrinsic vector character of the sequence.
Many  of the generalizations relied on extending in some way the notion of inverse
of  a vector or that  of the division of a vector by another vector.
For the Aitken procedure this leads to quite a few possible generalizations, see,
e.g., \cite{ramiere15}.

Peter Wynn himself considered a generalization of his scheme to
vectors \cite{Wynn62}.
Note that generalizing the recurrence formula \nref{eq:epsalg} of the \epsalg
only requires that we  define the inverse of a vector. To this end 
Wynn considered several options and ended up adopting a definition
proposed by Samelson:
\eq{eq:invvec}
%%  x\inv  = \frac{\bar{x} }{\bar{x} , x)
x\inv = \frac{\bar x} { (\bar x,x)}  = \frac{\bar x} { \| x \|_2^2 } 
\en
where $\bar x$ denotes the complex conjugate of $x$.
This is nothing but  the pseudo-inverse of $x$ viewed as a matrix from $\CC^1 $ to $\CC^n $. It is also known as the Samelson inverse of a vector. 
Recently, Brezinski et al \cite{Brezinski23} considered  more complex extensions of the
$\epsilon$-algorithm by resorting to Clifford Algebra.

The various generalizations of the \epsalg  to vectors based on extending the inverse
of a vector were not too convincing to  practitioners. 
For this reason,  \cite{Brezinski-TEA,Brezinski-mmpe}
adopted a different approach that essentially introduced duality as a tool.
We start with the simple case of Aitken's acceleration for which
%%% TEA etc
we rewrite  the original ansatz \nref{eq:Aitken0} as follows: 
\[
  x_* = x_{j} + \mu \Delta x_j 
\]
where $\mu =  1 / (1-\lambda)$ is a scalar. In the scalar case, it can be readily seen
from \nref{eq:Aitken1}, that $\mu$ is given by:
\eq{eq:mu} \mu = - \frac{\Delta x_j}{\Delta^2
  x_j} .
\en
In the vector case, we need $\mu$ to be a scalar, and so a natural
extension to vector sequences would entail taking inner products of the
numerator and denominator in \nref{eq:mu} with a certain vector $w$:
\eq{eq:muw}
\mu = - \frac{(w,\Delta x_j)}{(w,\Delta^2 x_j)} .
\en
This leads to the formula,
\eq{eq:aitkV}
y_j = x_{j} - \frac{(w,\Delta x_j)}{(w,\Delta^2 x_j)} \ \Delta x_j
= \frac{
\left| \begin{array}{cc} 
x_j           &      x_{j+1} \cr
 (w, \Delta x_j) & (w, \Delta x_{j+1}) 
\end{array} 
\right|} 
{
\left| \begin{array}{cccc} 
 1           &    1    \cr 
 (w, \Delta x_j) & (w, \Delta x_{j+1)}  
\end{array} 
\right|}  . 
\en
We need to clarify notation. The determinant in the numerator of
the right-hand side in \nref{eq:aitkV} now has the vectors $x_{j}$ and $x_{j+1}$
in its first row, instead of scalars. This is to be interpreted with the help of
the usual  expansion 
of this determinant with respect to this row. Thus, this determinant is
evaluated as: 
\begin{align*}
   (w, \Delta x_{j+1}) x_j -  (w,\Delta x_j) x_{j+1} &= 
  (w, \Delta x_{j+1} -\Delta x_j) x_j   + (w,\Delta x_j) (x_j - x_{j+1})\\
 &= (w, \Delta^2 x_j) x_j   \ - \ (w,\Delta x_j) \Delta x_j  .
\end{align*}
This establishes the second equality in \nref{eq:aitkV} by noting that 
the denominator in the last expression of \nref{eq:aitkV} is just $(w,\Delta^2 x_j)$.

The vector $w$ can be selected in a number of ways but it is rather natural to
take $w = \Delta^2 x_j $ and this leads to
\eq{eq:Aitken3} y_{j} = x_{j} -
\frac{ (\Delta x_j, \Delta^2 x_j) }{ \| \Delta^2 x_j \|_2^2 } \Delta x_j .
\en
Our aim in showing the expression on the right of \nref{eq:aitkV} is to unravel
possible extensions of the more general Shanks formula \nref{eq:shanksFormula}.
It turns out that the above formulation of Aitken acceleration for vector
sequences is a one-dimensional version of the RRE extrapolation method, see
Section~\ref{sec:RRE} for details.

The vector version of  Aitken's extrapolation described above can easily be
 extended to a vector form of  Shanks formula \nref{eq:shanksFormula}
by selecting a vector $w$ and replacing every difference
$\Delta x_{j+i}$ in \nref{eq:shanksFormula} by $(w, \Delta x_{j+i})$.
The first row of the determinant in the numerator will still have the vectors
$x_{j+i}$ and the resulting determinant is to be interpreted as was explained above.
Doing this would lead the following extension of Shanks formula~\nref{eq:shanksFormula}:
\eq{eq:shanksFormulaV}
y_j\up{m}  =  
\frac{
\left| \begin{array}{cccc} 
x_j           &      x_{j+1}  & \cdots & x_{j+m} \cr
 (w, \Delta  x_j) & (w, \Delta  x_{j+1})  & \cdots & (w, \Delta  x_{j+m}) \cr 
\vdots        &  \vdots       &  \vdots  &  \vdots     \cr
(w, \Delta  x_{j+m-1})  & (w, \Delta  x_{j+m}) & \cdots & (w, \Delta  x_{j+2m-1})
\end{array} 
\right|} 
{
\left| \begin{array}{cccc} 
 1           &    1      & \cdots & 1 \cr
 (w, \Delta  x_j) & (w, \Delta  x_{j+1})  & \cdots & (w, \Delta  x_{j+m}) \cr 
\vdots        &  \vdots       &  \vdots  &  \vdots     \cr
(w, \Delta  x_{j+m-1})  & (w, \Delta  x_{j+m}) & \cdots & (w, \Delta  x_{j+2m-1})
\end{array} 
\right|} 
\en
This generalization was advocated   in \cite{Brezinski-mmpe}.
%% in what he called the topological \epsalg.

However, there is a missing step in our discussion so far: Although we now have
a generalized Shanks formula we still need an effective way to evaluate the related expressions,
hopefully with something similar to the \epsalg, that avoids determinants.
This is not an issue for  Aitken's procedure which corresponds to the case $m=2$
as the corresponding determinants
are trivial. For the cases where $m$ is larger, a  different approach is required.
This was examined by
\cite{Brezinski-mmpe,Brezinski-TEA} who proposed a whole class of  techniques referred to
\emph{Topological \epsalg}, see also, \cite{BrezinskiGenesis2019,Brezinski}.

\subsection{The projection viewpoint and MMPE}\label{sec:mmpe}
Another way to extend extrapolation methods to vector sequences
is to invoke a projection approach. We return to
equations~\nref{eq:Kernel3} where the second equation is taken with $i=0$ and
eliminate the first equation by setting $a_0 =  1 - a_1 - a_2 - \cdots - a_{m}$
which leads to
%% \[
%%   \Delta x_j +
%%   a_1 (\Delta x_{j+1} - \Delta x_{j})  + \cdots  a_i (\Delta x_{j+i} - \Delta x_{j}) + 
%%   \cdots  + a_m (\Delta x_{j+m} - \Delta x_{j})  = 0
%% \]
%%%  \Delta x_{j+m} - \sum_{i=0}^{m-1} \beta_i [\Delta x_{j+i+1} - \Delta x_{j+i}] = 0 \] 
\eq{Deltax0}
     \Delta x_j + \sum_{i=1}^m a_i (\Delta x_{j+i} - \Delta x_{j}) = 0 .
   \en
   Making use of the simple identity
   $
     u_{j+i} - u_j = \Delta u_{j+i-1} + \Delta u_{j+i-2} +\cdots + \Delta u_{j}
   $ 
   will show that the term $\Delta x_{j+i} - \Delta x_{j} $ is the sum 
   of the vectors $\Delta^2 x_{k} $ for $k=j$ to $k=j+i-1$  and this leads to the following
   reformulation of the left-hand side of \nref{Deltax0}:
     \begin{align*}
       \Delta x_j + \sum_{i=1}^m a_i \sum_{k=j}^{j+i-1} \Delta^2  x_{k} &= 
    \Delta x_j + \sum_{k=j}^{j+m-1} [a_{k-j+1} + a_{k-j+2} +\cdots + a_m] \Delta^2 x_{k} \\
         &\equiv     \Delta x_j + \sum_{k=j}^{j+m-1} \beta_{k-j+1}  \Delta^2 x_{k} 
%%        &\equiv     \Delta x_j + \sum_{i=1}^{m} \beta_{i}  \Delta^2 x_{j+i-1} 
     \end{align*} 
     where we set $\beta_{k-j+1} \equiv a_{k-j+1} + a_{k-j+2} + \cdots + a_m$.
     Thus, equation \nref{Deltax0} becomes
%%     \[ \Delta x_j + \sum_{i=1}^{m} \beta_{i}  \Delta^2 x_{j+i-1} \]
   \eq{eq:Deltax2}
     \Delta x_j + \sum_{i=1}^{m} \beta_{i} \Delta^2 x_{j+i-1} = 0 
     \en
     where $\beta_i = a_{i}+a_{i+1} + \cdots + a_m$.     It is convenient to   define:
   \begin{align} 
   \Delta X_j & = [\Delta x_j, \Delta x_{j+1}, \cdots , \Delta x_{j+m-1}] \label{eq:Delta1} \\
     \Delta^2 X_j & = [\Delta^2 x_j, \Delta x_{j+1}, \cdots , \Delta^2 x_{j+m-1}] \label{eq:Delta2}\\
     \beta &= [\beta_1, \beta_2, \cdots, \beta_m]^T  .  \label{eq:beta}                    
   \end{align} 
   With this notation the  system~\nref{Deltax0} takes the matrix form:
   \eq{eq:Deltax3}
   \Delta x_j + \Delta^2 X_j \ \beta = 0 . 
   \en
   This is an over-determined system of equations with $m$ unknowns. Taking a
   projection viewpoint, we can select a set of vectors
   $W \ \ \in \ \RR^{n\times m} $ and extract a solution to the projected system
   \eq{eq:shanksProj0} W^T \left( \Delta x_j + \Delta^2 X_j \ \beta \right) = 0
   .  \en Assuming that the $m \times m$ matrix $ W^T [\Delta^2 X_j] $ is
   nonsingular then the accelerated iterate exists and is given by
   \eq{eq:shanksProj} y_j = x_j - \Delta X_j \ \beta \quad \mbox{where} \quad
   \beta = \left[ W^T \Delta^2 X_{j}\right]\inv (W^T \Delta x_j) .  \en A number
   of methods developed in \cite{Brezinski-mmpe} were of the type shown above
   with various choices of the set $W$.  Among these, one approach in particular
   is worth mentioning due do its connection with other methods.

       This approach starts with another  natural extensions of \nref{eq:shanksFormulaV} which
       is to use a different vector $w$ for each of the
       rows of the determinants but apply these to the same $\Delta x_k$ on the same column:
\eq{eq:shanksFormulaV2}
y_j\up{m}  =  
\frac{
\left| \begin{array}{cccc} 
x_j           &      x_{j+1}  & \cdots & x_{j+m} \cr
 (w_1, \Delta  x_j) & (w_1, \Delta  x_{j+1})  & \cdots & (w_1, \Delta  x_{j+m}) \cr 
\vdots        &  \vdots       &  \vdots  &  \vdots     \cr
(w_m, \Delta  x_{j})  & (w_m, \Delta  x_{j+1}) & \cdots & (w_m, \Delta  x_{j+m})
\end{array} 
\right|} 
{
\left| \begin{array}{cccc} 
 1           &    1      & \cdots & 1 \cr
 (w_1, \Delta  x_j) & (w_1, \Delta  x_{j+1})  & \cdots & (w_m, \Delta  x_{j+m}) \cr 
\vdots        &  \vdots       &  \vdots  &  \vdots     \cr
(w_m, \Delta  x_{j})  & (w_m, \Delta  x_{j+1}) & \cdots & (w_m, \Delta  x_{j+m})
\end{array} 
\right|}  \ .
\en
It was first discussed in \cite{Brezinski-mmpe}  and essentially the same method
was independently published  in the Russian literature in 
\cite{Pugachev77}. The method was later 
 rediscovered by  \cite{sidi-al-86}  who gave it the name
 \emph{Modified Minimal Polynomial Extrapolation} (MMPE) by which it is commonly known today.
 Because there is some freedom in the  selection of the set $W$, MMPE represents more
 than just one method.  We will discuss it further in Section~\ref{sec:RRE}. 

 It is rather interesting that the above determinant can be expressed in the
 same form as \nref{eq:shanksProj}.  To see this, we need to process the
 numerator and denominator of \nref{eq:shanksFormulaV2} as follows.  Starting
 from the last column and proceeding backward, we subtract column $k-1$ from
 column $k$, and this is done for $k=m+1, m, \cdots, 2$. With this, the first
 row of the determinant in the denominator will be a one followed by $m$ zeros.
 The first row of the determinant in the numerator will be the vector $x_j$
 followed by $\Delta x_j, \Delta x_{j+1}, \Delta x_{j+m-1}$.  The entries in the
 first columns of both denominators are unchanged.  The block consisting of
 entries $ (2:m+1) \times (2:m+1)$ of both denominators will have the entries
 $(w_i, \Delta^2 x_{j+k-1}) $ in its $k$-th column, with $k=1,\cdots,m$.  This
 block is nothing but the matrix $W^T \Delta^2 X_j$.  To expand the resulting
 determinant in the numerator, we utilize the following relation where $\tau$ is
 a scalar and $S$ is an invertible $m \times m$ matrix: \eq{eq:SchurF}
 \left| \begin{matrix} \tau & f \\ b & S\end{matrix} \right| = \det(S) [\tau - f
 S\inv b ] .  \en
 This relation is also true in the case when $\tau $ is a
 vector in $\RR^{\nu\times 1}$ and $f$ is in $\RR^{\nu \times m}$ with
 the interpretation of such determinants seen earlier.
%% . by applying the formula component wise.
  After transforming the numerator of \nref{eq:shanksFormulaV2} as discussed above,
  we will obtain a determinant in the form of the left-hand side of \nref{eq:SchurF} in which  $f = \Delta X_j $,
  $b = W^T \Delta X_j$ and $S = W^T \Delta^2 X_j$. Applying the above formula to this
  determinant results in the expression~\nref{eq:shanksProj}.

  %% ==== END OF SECTION - intro to RRE etc/ 

Vector acceleration algorithms were also defined as processes devoted
solely to vector sequences generated from linear iterative procedures.
%%Thus, 
%%the Reduced Rank Extrapolation (RRE) and  the Minimal Polynomial Extrapolation (MPE)
%%were specifically designed for sequences of vectors generated by a fixed-point
%%iteration for solving linear systems.
The next section  explores this
framework a  little  further.

\subsection{RRE and related methods}\label{sec:RRE}
The difficulties encountered in extending Aitken's method and the \epsalg to
vector sequences led researchers to seek better motivated alternatives, by
focusing on vector sequences generated from specific linear processes.  Thus,
\cite{CabayJackson} introduced a method called the Minimal
Polynomial Extrapolation (MPE) which exploits the low rank character of the set
of sequences in the linear case. At about the same time,  \cite{rre} and
\cite{Mesina77} developed a method dubbed `Reduced Rank Extrapolation' (RRE)
which was quite similar in spirit to MPE.  Our goal here is mainly to link these
methods with the ones seen earlier and developed from a different viewpoint.

We begin by describing RRE by following the notation and steps of the original
paper \cite{Mesina77}.  RRE was initially designed for vector sequences generated from the
following fixed-point (linear) iterative process: \eq{eq:rre0} x_{j+1} = M x_{j} + f .
\en
%%We use the (forward) difference vectors $\Delta x_j$ defined earlier in
%%\nref{eq:Deltaxi} along with $\Delta^2 x_j$ and
The author chose to express the extrapolated sequence in the form:
\eq{eq:rre_x}
y_{m} = x_0 + \sum_{i=1}^{m} \beta_i \Delta x_{i-1} 
\en
where $\beta_i$, $i=1,\cdots, m$ are scalars to be determined.

With  the notation ~(\ref{eq:Delta1} -- \ref{eq:beta}) 
we can write  $y_m = x_0 + \Delta X_0 \beta $. For the 
 sequences under consideration, i.e., those defined by  \nref{eq:rre0},
 the relation $\Delta^2 x_j = - (I-M) \Delta x_j$ holds, and therefore we also have
  $ \Delta^2 X_0 = -(I-M) \Delta X_0$.  Furthermore, since we are in effect solving the system
 $(I-M) x = f$ the residual  vector associated with $x$ is  $r = f - (I-M)x $.
 Consider now the residual $r_m$ for the vector $y_m$: 
 \begin{align}
   r_m &= f- (I-M) y_m = f - (I-M) [x_0 + \Delta X_0 \beta] \nonumber\\
%%%       &= f  - (I-M) x_0 - (I-M) \Delta X_0 \beta \nonumber\\
       &= f  + M x_0 - x_0 + \Delta^2 X_0 \beta \nonumber\\
       &= \Delta x_0 + \Delta^2 X_0 \beta . \label{eq:resrre}
 \end{align}
 Setting the over-determined system $ \Delta x_0 + \Delta^2 X_0 \beta = 0$ yields the exact same system as
 \nref{eq:Deltax2} with $j=0$.

 The idea  in the original paper was  to determine $\beta$ so as to minimize
 the Euclidean norm of the residual \nref{eq:resrre}.
 It is common to formulate this method in the
 form of a projection technique: the norm 
 $\| \Delta x_0 + \Delta^2 X_0 \beta \|_2 $  is  minimized by imposing
 the condition that the
 residual $r = \Delta x_0 + \Delta^2 X_0 \beta$ be orthogonal to the span of the columns of
 $\Delta^2 X_0$, i.e., to the second order forward differences $\Delta^2 x_i$:
 \eq{eq:rreP} 
   (\Delta^2 X_0)^T [ \Delta x_0 + \Delta^2 X_0 \beta ] = 0 .
 \en
 This is  the exact same system as in \nref{eq:shanksProj0}  with $j=0$ and
 $W=\Delta^2 X_0$. Therefore, RRE is an instance of the MMPE method seen earlier
 where we set $W$ to be $W=\Delta^2 X_0$.
 The case $m = 1$ is  interesting. Indeed,
 when $m=1$ equations~\nref{eq:rre_x} and \nref{eq:rreP} yield
 the same extrapolation as the vector version of the Aitken process shown in Formula
 \nref{eq:Aitken3} for the case $j=0$.  One can therefore view the vector version
 of the Aitken process \nref{eq:Aitken3} as a particular instance of RRE with
 a projection dimension $m$ equal to 1.
 
 In the early development of the method, the idea was to exploit a low-rank
 character of $\Delta^2 X_0$. If $\Delta^2 X_0$ has rank $m$ then $y_m$ will be an
 exact solution of the system $(I-M) x =f $  since $r_m$ will be zero.
 Let us explain this in order  to make a connection with Krylov subspace methods to
 be discussed in Section~\ref{sec:Krylov}.
 From~\nref{eq:rre0} it follows immediately that $x_{j+1} - x_j = M (x_j -x_{j-1})$ and therefore
 \eq{eq:RREKry1}
 \Delta x_j = x_{j+1} - x_j = M^j (x_1 -x_{0})  = M^j r_0 .
 \en
 Therefore, the accelerated vector  $y_m$ is of the form
 $ y_m = x_0+q_{m-1} (M) r_0 $ where $q_{m-1}$ is a polynomial of degree $\le m-1$.
 The residual vector $r_m $ is $f - (I-M) y_m$ and thus:
 \eq{eq:rreKry2}
     r_m = r_0 - (I-M) q_{m-1}(M) r_0 \equiv p_m(M) r_0
 \en
 where $p_m(t) \equiv 1 - (1-t)q_{m-1}(t)$ is a degree $m$ polynomial such that $p_m(1) = 1$.
 The polynomial $q_{m-1}$ and therefore also the residual polynomial $p_m$ is parameterized
 with the
 $m$ coefficients $\beta_i, i=1,\cdots, m$. The minimization problem of RRE, can be
 translated in terms of polynomials:
 find the degree $m$ polynomial $p_m$ satisfying  the constraint $p_m(1)=1$ for which 
 the norm of the residual $p_m(M)r_0 $ is minimal. If the minimal polynomial for $M$ is
 $m$ then the smallest norm residual is zero. This was behind the motivation of the method. 
 Note that the accelerated solution $y_m$ belongs to the subspace
 \[K_m(M,r_0) = \Span \{r_0, M r_0, \cdots, M^{m-1} r_0 \}\]
 which is called a \emph{Krylov subspace}. This is the same subspace as the one,
 more commonly associated with linear systems, in which $M$ is replaced by the coefficient matrix
 $I-M$.

 %%% MPE 
 In the scenario where the minimal polynomial is of degree $m$, the rank of $M$
 is also $m$ hence, the term `Reduced Rank Extrapolation' given to the method.
 Although the method was initially designed with this special case in mind, it
 can clearly be used in a more general setting.  Note also that although the RRE
 acceleration scheme was designed for sequences of the form \nref{eq:rre0}, the
 process is an extrapolation method, in that it does not utilize any other
 information than just the original sequence itself. All we need are the first
 and second order difference matrices $\Delta X_0$ and $\Delta^2 X_0$ and the
 matrix $M$ is never referenced. Furthermore, nothing prevents us from using the
 procedure to accelerate a sequence produced by some nonlinear fixed-point
 iteration.

 The MPE method mentioned at the beginning of this section
 is closely related to RRE. Like RRE, MPE
 expresses the extrapolated solution in the form \nref{eq:rre_x} and so the
 residual of this solution also satisfies \nref{eq:resrre}.  Instead of trying
 to minimize the norm of this residual, MPE imposes a Galerkin condition of the
 form $ W^T( \Delta x_0 + \Delta^2 X_0 \beta) = 0$, but now $W$ is selected to
 be equal to $\Delta X_0$.  As can be seen this can again be derived from the
 MMPE framework discussed earlier.  Clearly, if $M$ is of rank $m$ then $y_m$
 will again be the exact solution of the system.  Note also that RRE was
 presented in difference forms by \cite{rre} and \cite{Mesina77}, and
 the equivalence between the two methods was established in
 \cite{SmithFordSidi87}.

%% -------------------------------------------???

\subsection{Alternative formulations of RRE and MPE}
In formula \nref{eq:rre_x} the accelerated vector $y_m$ is expressed as an
update to $x_0$, the first member of the sequence.  It is also possible to express
it as an update to $x_m$, its most recent member. Indeed,  the affine space
$x_0 + \Span \{ \Delta X_0 \}$ is identical with $x_m + \Span \{ \Delta X_0 \}$
because of the relation:
\[
  x_m = x_ 0 + \Delta x_0 + \Delta x_1 + \cdots + \Delta x_{m-1}
  = x_0 + \Delta X_0 e 
\]
   where $e$ is the vector of all ones. If we set $y_m  = x_m + \Delta X_0 \gamma $
   then the residual in \nref{eq:resrre} becomes $\Delta x_m + \Delta X_0 \gamma$ and so we
   can reformulate RRE as:
\eq{eq:rre2}
y_m = x_m  + \Delta X_0 \gamma  \quad \mbox{where} \quad \gamma = \text{argmin}_\gamma
\| \Delta x_m + \Delta X_0 \gamma \|_2 . 
\en 
A similar formulation also holds for MPE.
The above formulation will be useful when comparing
RRE with the Anderson acceleration to be seen later. 
%%--------------------------------------------

In another expression of MPE and RRE the $x_i$'s are invoked directly instead of
their differences in \nref{eq:rre_x}.  We will  explain this now because 
similar alternative formulations will appear a few times later in the paper.
Equation \nref{eq:rre_x} yields:
%% \[y_m = \sum_{i= 0}^{m+1} \beta_i \Delta x_{i-1} \]
%%with $\beta_{m+1} = 0, \beta_{0}  =1$ and $x_{-1} = 0$ and then \nref{eq:rre_x} leads to 
 \begin{align*} 
   y_m &= x_0 + \beta_1 (x_1 - x_0) +  \beta_2 (x_2 - x_1) + \cdots + \beta_{m} (x_m - x_{m-1}) \\
       &= (1 - \beta_1) x_0 + (\beta_1 - \beta_2) x_1 + \cdots + (\beta_{i} -\beta_{i+1}) x_i + \cdots +
         \beta_{m} x_m .
%%       & = \sum_{i=0}^{j} \alpha_i x_i \quad \text{with} \quad \alpha_i = \beta_{i-1} - \beta_i ,
%%         \quad i=0, \cdots, j .
 \end{align*}
 This can be rewritten as
 $   \sum_{i=0}^{m} \alpha_i x_i $ by setting
 $ \alpha_i \equiv  \beta_i -\beta_{i+1}$ for $i=0,\cdots,m$, with the convention that
 $\beta_{m+1} = 0$, and $\beta_{0}  =1$.
 %% with the previous convention that  $\beta_j = 0; \beta_{-1} = 1$.
 We then observe that
 the $\alpha_i$'s sum up to one.
Thus, we can reformulate \nref{eq:rre_x} in the form
\eq{eq:yjnew}
y_m = \sum_{i=0}^{m} \alpha_i x_i  \quad  \text{with} \quad \sum_{i=0}^{m} \alpha_i = 1.
\en 
The above setting is a quite common alternative to that of
\nref{eq:rre_x} in acceleration methods.
It is also possible to proceed in reverse by formulating an accelerated sequence stated as in
\nref{eq:yjnew} in the form \nref{eq:rre_x} and this was done previously,
see Section~\ref{sec:mmpe}.

\iffalse
Indeed, from the condition \nref{eq:yjnew} on the $\alpha_i$'s
we note that $\alpha_0 = 1 -\sum_{i=1}^j \alpha_i$, so:
\[
  y_j = (1-\sum_{i=1}^j \alpha_i)  x_0 + \sum_{i=1}^j \alpha_i  x_i = x_0 + \sum_{i=1}^j \alpha_i  (x_i- x_0)
  \] 
  and the result follows by writing each term $ x_i-x_0$ in the sum as $x_i -x_0= \sum_{j=0}^{i-1} \Delta x_j$.

  More generally,   it is also possible to reformulate the iteration in the form
  \[
    y_j = x_p + \sum \beta_i \Delta x_i 
  \]
  where $p$ is between $0 $ and $j-1$.  This is because
  \begin{align*}
    x_p + \sum \beta_i \Delta x_i
    &= x_0 + (x_p-x_0) + \sum \beta_i \Delta x_i \\ 
    &= x_0 + \sum_{j=0}^{p-1} \Delta x_j + \sum_{i=0}^j \beta_i \Delta x_i   \\
    &= x_0 + \sum_{i=0}^j \beta_i' \Delta x_i   \quad \mbox{with} \quad
      \beta'_i = \beta_i + \mathbf{1}(i<p)
      \end{align*} 
      Here $\mathbf{1}(i<p)$ is zero if $i\ge 0$ and 1 otherwise. 
\fi 
 
\subsection{Additional comments and references}
%%      Two Main points: 1) Only sequence items used
%%      2) linear systems provide foundation for case of vectors
%%  What characterizes the extrapolation algorithms seen in this section is
%%  that they use no other information than the original sequence in order to
%%  build the new sequence. 

It should be stressed that extrapolation algorithms, of the type discussed in
this section, often played a major role in providing a framework for the other
classes of methods.  They were invented first, primarily to deal with scalar
sequences, e.g., those produced by quadrature formulas.  Later, they served as
templates to deal with fixed-point iterations, where the fixed-point mapping $g$
was brought to the fore to develop effective techniques.  While
`extrapolation'-type methods were gaining in popularity, physicists and chemists
were seeking new ways of accelerating computationally intensive, and
slowly converging, fixed-point-iterations.  The best known of these fixed-point
techniques is the Self-Consistent Field (SCF) iteration which permeated a large
portion of computational chemistry and quantum mechanics.  As a background, SCF
methods, along with the acceleration tricks developed in the context of the
Kohn-Sham equation, will be summarized in Section~\ref{sec:dft}.  Solving linear
systems of equations is one of the most common practical problems encountered in
computational sciences, so it should not be surprising that acceleration methods
have also been deployed in this context.  The next section addresses the special
case of linear systems.

We conclude this section with a few bibliographical pointers. 
Extrapolation methods generated a rich literature starting in the 1970s and a
number of surveys and books have appeared that provide a wealth of details, both
historical and technical.  Among early books on the topic, we
mention~\cite{Brezinski,Brezinski-RZ-book}.  Most of the developments of
extrapolation methods took place in the 20th century and these are surveyed by
\cite{BrezinskiReview00}.
A number of other  papers  provide an in-depth review of extrapolation and acceleration methods,
see, e.g., \cite{higham2016anderson,JbilouSadok00,sidi2012review}.
A nicely written more recent 
survey of extrapolation and its relation to rational approximation is the 
volume by  \cite{Brezinski-RZ-book20} which contains
a large number of references while also discussing the fascinating lives of the
main contributors to these fields.

%%The paper by \cite{GBonneDelahaye80} established a negative result
%%which essentially stated that a universal sequence transformation that is  able to accelerate all
%%sequences, or even all monotonically converging scalar ones, cannot exist.

%%** stability issuesz; More stable variants exist [Brezinski, Redivo-Zaglia, '91]

%%** current research 

%% ==============

%%  Among these we refer the reader to \cite{Brezinski19}
%%BrezinskiHist96.pdf for references and additions..
%%% ADD: many references by Brezinski and MRZ

\section{Accelerators for linear iterative methods}
\label{sec:LinCase}
Starting with the work of  Gauss in 1823, see ~\cite{Letter2Gerling}, 
quite a few iterative methods for solving linear systems of equations were developed.
The idea of accelerating these iterative procedures is natural and it has been invoked
repeatedly in the past.  A diverse set of techniques were advocated for this purpose,
including Richardson's method,  Chebyshev acceleration, and the class of
Krylov subspace methods.  It appears that acceleration techniques were
first suggested in the early 20th century with the work of Richardson, and
reappeared in force a few decades later as modern electronic computers started
to emerge.

\subsection{Richardson's legacy}\label{sec:Richardson}
Consider a linear system of the form
\eq{eq:Ax=b} A x = b,
\en
where $A \ \in \ \RR^{n \times n}$ and $b \ \in \ \RR^{n}$.
 Adopting the point of view expressed in Equation~\nref{eq:FixedPt1} with the function
 $f(x) \equiv Ax - b $ we obtain the iteration
 \eq{eq:Richardson0} x_{j+1}  = x_j -  \gamma (Ax_j -b) = x_j + \gamma r_j  
 \en
 where $x_j$ is the current iterate and $r_j = b - A x_j$ the related residual.
 This  simple `first-order' scheme was  proposed by ~\cite{Richardson1910}.
Assuming that the eigenvalues of $A$ are real and included in the interval
$[\alpha, \beta]$ with $\alpha > 0$, it is not difficult to see that the scheme will
converge to the solution for any $\gamma $ such that $0 \lt \gamma < 2/\beta$ and that the value of
$\gamma$ that yields the best convergence rate is $\gamma_{opt} = 2/[\alpha + \beta]$,
see, e.g., \cite{Saad-book2}.

Richardson also considered a more general procedure where the scalar
$\gamma$  changed at each step: 
\eq{eq:mr} x_{j+1} = x_j + \gamma_j r_j . \en
If $x_*$ is the exact solution, he studied the question of selecting the
 best sequence of $\gamma_j$'s to use if we want the
error norm $\| x_j -x_*\|_2$ at the $j$-th step to be the smallest possible.
This will be addressed in the next section. %% Section~\ref{sec:Cheb}.

\typeout{======================================== $x_*$ defined???}

A number of procedures discovered at different times can be cast into the
general Richardson iteration framework represented by \nref{eq:mr}.  Among these, two
examples stand out. The first 
example is the \emph{Minimal Residual iteration} (MR) where $\gamma_j $
is selected as the value of $\gamma$ that minimizes the next residual norm:
$\| b - A (x_j + \gamma r_j)\|_2$.  The second is the steepest descent
algorithm where instead of minimizing the residual norm, we 
minimize $\| x - x_* \|_A $ where $\| v\|_A^2 = (Av,v)$.  Both methods are
one-dimensional projection techniques and will be discussed in
Section~\ref{sec:proj}.

\subsection{The Chebyshev procedure}\label{sec:Cheb}
 \cite{Richardson1910} seems to have been the first to consider a
general scheme given by \nref{eq:mr}, where the $\gamma_j$'s are sought with the
goal of achieving the fastest possible convergence.  From \nref{eq:mr} we get
\eq{eq:resPol1} r_{j+1} = b - A (x_j +\gamma_j r_j) = r_j - \gamma_j A r_j = (I
-\gamma_j A ) r_j \en which leads to the relation \eq{eq:resPol2} r_{j+1} = (I
-\gamma_j A ) (I -\gamma_{j-1} A )\cdots (I -\gamma_0 A ) r_0 \equiv p_{j+1}(A)
r_0 \en
where $p_{j+1}(t) $ is a polynomial of degree $j+1$. Since
$p_{j+1} (0) =1$ then $p_{j+1}$ is of the form $p_{j+1}(t) = 1 - t q_j (t)$,
with deg $(q_j) = j$.  Therefore,
\[r_{j+1} = (I - A q_j(A) ) r_0 = (b - A x_0) - A q_j(A) r_0 = b - A [x_0 + q_j (A) r_0], \]
  which means that 
\eq{eq:xjp1}
x_{j+1} = x_{0} + q_j(A) r_0 .
\en

Richardson  worked with error vectors instead of residuals. Defining the error  
 $u_j = x_* - x_j$ where $x_*$ is the exact solution, and   using  the relation
 $A u_j = b - A x_j = r_j$ we can multiply \nref{eq:resPol2} by
 $A\inv$ to obtain  the relation
\eq{eq:errPol2}
  u_{j+1} = p_{j+1} (A) u_0 
  \en
  for the  error vector at step $j+1$,   where $p_{j+1}$ is the same polynomial as above.
He formulated the problem of selecting the
  $\gamma_i$'s in \nref{eq:errPol2} with a goal of making the error $u_{j+1}$ as
  small as possible.  The $\gamma_i$'s in his formula were the inverses of the
  ones above - but the reasoning is identical.
  Such a scheme can be viewed as an `acceleration' of the first-order Richardson
  method \nref{eq:Richardson0} seen earlier.   Richardson assumed only the
  knowledge of an interval $[\alpha, \beta]$ containing the eigenvalues of $A$.
  When $A$ is Symmetric Positive Definite (SPD) then there exists $\alpha, \beta$,
  with $\alpha >0$ such that   $\Lambda(A) \subset [\alpha, \ \beta] $.

%%%%* Idea: Replace $\Lambda (A)$ by $[\alpha, \ \beta] $ 

  If we wish to minimize the maximum deviation from zero in the interval then
  the best polynomial can be found from a well-known and simple result in
  approximation theory.  We will reason with residuals, recall equation
  \nref{eq:resPol2}, and denote by $\PP_{j+1,0}$ the set of Polynomials $p$ of
  degree $j+1$ such that $p(0) = 1$.  Thus, $p_{j+1}$ in \nref{eq:resPol2} and
  \nref{eq:errPol2} is a member of $\PP_{j+1,0}$ and our problem is to find a
  polynomial $ p \in \PP_{j+1,0}$ such that
  $ \max_{t \ \in \ [\alpha, \beta] } |p (\lambda)| $ is minimal. In other words
  we seek the solution to the min-max problem:
  \eq{eq:minmax} \min_{p \in
    \PP_{j+1,0} }\ \max_{ t \in [\alpha,\beta] } | p( t ) | .  \en The
  polynomial $T_{j+1}$ that realizes the solution to \nref{eq:minmax} is known
  and it can be expressed in terms of the Chebyshev polynomials of the first
  kind $C_j(t)$ : \eq{eq:cheb} T_{j+1} (t) \equiv \frac{1 }{ \sigma_{j+1} }
  C_{j+1} \left( \frac{{ \beta + \alpha - 2 t } }{ {\beta-\alpha} } \right)
  \quad \mbox{with} \quad \sigma_{j+1} \equiv C_{j+1} \left( \frac{{\beta +
        \alpha }}{{\beta -\alpha} } \right) .  \en If the polynomial $T_{j+1}$
  is set to be the same as $p_{j+1}$ in \nref{eq:resPol2} then clearly the
  inverses of its roots will yield the best sequence of $\gamma_i$'s to use in
  \nref{eq:mr}. Richardson seems to have been unaware of Chebyshev
  polynomials. Instead, his approach was to select the roots $1/\gamma_i$ by
  spreading them in an ad-hoc fashion in $[\alpha, \beta]$.  One has to wait
  more than four decades before this idea, or similar ones based on Chebyshev
  polynomials, appeared.

A few of the early methods in this context computed the roots of the modified
Chebyshev polynomial \nref{eq:cheb} and used the inverses of these roots as the $\gamma_j$'s
in \nref{eq:mr} \cite{shortley53,sheldon55,Young54}.  These methods were
difficult to use in practice and prone to numerical instability.  A far more 
elegant approach is to exploit the 3-term recurrence of the Chebyshev
polynomials:
\eq{eq:ChRec} C_{j+1}(t) = 2tC_j(t) - C_{j-1}(t), \quad j\ge 1.
\en starting with $C_0(t) = 1, C_1(t) = t$.  A 1952 article by 
\cite{Lanczos52} suggested a process for preprocessing a right-hand side of a
linear system prior to solving it with what was then a precursor to a Krylov subspace
method. The residual polynomials related to this process are more
 complicated than Chebyshev polynomials as was noted by \cite{Young54}. However they too rely on Chebyshev
 polynomials and Lanczos does exploit the 3-term recurrence in his 1952 article while Shortley, Sheldon, and
 Young do not.

The first real acceleration scheme based on the Chebyshev polynomials that
exploits the 3-term recurrence seems to be the 1959 paper by
\cite{VonNeumann59}. The article included two appendices written by von
Neumann and the second of these discussed the method.  The method
must have been developed around the year 1956 or earlier by von Neumann who died
on Feb 8, 1957. Two years after the von Neumann article
\cite{GolubVarga1961} published a very similar technique which they
named `semi-iterative method'.  They included a footnote 
acknowledging the  earlier contribution by von Neumann.

%% ===== acceleration viewpoint

The arguments of von Neumann's contribution were rooted in acceleration
techniques with a goal of producing a process for linearly combining the previous
iterates of a given sequence, in order to achieve faster
convergence. Specifically, the new sequence is of the form
$ y_j = \sum_{i=0}^{j} \eta_{i,j} x_{i} $ where the $\eta$'s satisfy the
constraint that $\sum\eta_{i,j} = 1 $.  Though this may seem different from what
was done above, it actually amounts to the same idea.

Indeed, the residual of the `accelerated' sequence is:
\[
 b - A y_j = b -  \sum_{i=0}^{j} \eta_{i,j}  A x_{i}
  =  \sum_{i=0}^j \eta_{i,j} [b - A x_{i}] 
  =  \sum_{i=0}^{j} \eta_{i,j} p_i(A) r_0 
\]
where $p_i(t)$ is the residual polynomial of degree $i$ associated with the original sequence,
i.e., it is defined by  \nref{eq:resPol2}. As was already seen, the
polynomial $p_i$ satisfies  the constraint
$p_i(0)=1$. Because  $\sum_{i=0}^j  \eta_{ij} p_i(0) = \sum  \eta_{ij} =1$, the new
residual polynomial $\tilde p_j(t) = \sum_{i=0}^{j} \eta_{i,j} p_i(t)$ also satisfies
the same condition  $\tilde p_j(0) = 1$. Therefore, we can say that  
the procedure seeks to find a degree $j$ polynomial, expressed in the form
$ \sum_{i=0}^j  \eta_{i,j} p_i(t)$, whose value at zero is one and which is optimal in some sense.

We now return to Chebyshev acceleration to provide details on the procedure discovered
by John von Neumann and Golub and Varga.   Letting: 
\eq{eq:scalars}
\theta \equiv \frac{\beta + \alpha}{ 2}, \quad 
\delta \equiv  \frac{\beta - \alpha}{ 2},
\en 
we can write $T_j$ defined   by \nref{eq:cheb} as: 
\eq{eq:cheb1}
T_j (t) \equiv \frac{1 }{ \sigma_j } 
C_j \left( \frac{{ \theta -  t } }{ \delta } \right) 
 \quad \mbox{with} \quad
\sigma_j \equiv 
 C_j \left( \frac{{\theta}}{{\delta} } \right)    . 
\en 
The three-term recurrence for the Chebyshev polynomials leads to
 \eq{eq:sigmaj}
\sigma_{j+1} = 2 \ \frac{\theta }{\delta}  \sigma_j 
- \sigma_{j-1} , \ j=1, 2\, \ldots,
\quad \mbox{with:}  \quad  \sigma_0 = 1, \quad 
\sigma_1 =  \frac{\theta }{\delta} .
\en
We combine  the recurrence \nref{eq:ChRec} and \nref{eq:cheb1} into
a 3-term recurrence for the polynomials $T_j$, for $j\ge 1$:
\eq{eq:Rec0}
\sigma_{j+1} T_{j+1} (t)  =  2 \ \frac{\theta- t}{\delta}
\sigma_j T_j(t) - \sigma_{j-1} T_{j-1} (t) 
 \en
starting with $ T_0(t) = 1 , \ T_1(t) = 1 - \frac{ t  }{ \theta } $. 

We now need a way of expressing the sequence of iterates from the above recurrence
of the residual polynomials. There are at least two ways of doing this.
One  idea is to note that $ r_{j+1}-r_j  = - A( x_{j+1} - x_j) = T_{j+1}(A)r_0 -  T_j(A)r_0$.
Therefore we need to find a recurrence for $(T_{j+1}(t)-T_j(t))/(-t)$. Going back
to \nref{eq:Rec0} and exploiting the recurrence \nref{eq:sigmaj} we write for $j\ge 1$:
\begin{align}
  \sigma_{j+1} T_{j+1} -\sigma_{j+1} T_j
  & =  2 \ \frac{\theta- t}{\delta} \sigma_j T_j 
    -  \left(2 \frac{ \theta }{\delta} \sigma_j - \sigma_{j-1}\right)T_j -  \sigma_{j-1} T_{j-1}
  \nonumber \\
  & =  - 2 \ \frac{t}{\delta} \sigma_j T_j  +  \sigma_{j-1} (T_j -T_{j-1}) \qquad  \to \nonumber  \\
    \frac{T_{j+1} -T_j}{-t} 
  & =2 \ \frac{1}{\delta} \frac{\sigma_j}{\sigma_{j+1} } T_j  + \frac{\sigma_{j-1}}{\sigma_{j+1} }
    \frac{T_j -T_{j-1}}{-t} . \label{eq:Rec1}  
\end{align}
When translated into vectors of the iterative  scheme, $T_j(t)$ will give $r_j$, and
$(T_{j+1}(t) - T_j(t))/(-t)$ will translate to $x_{j+1}-x_j$ and if we set $d_j = x_{j+1}-x_j$
then \nref{eq:Rec1} yields:
\eq{eq:Rec2}
d_j = \frac{2}{\delta} \frac{\sigma_j}{\sigma_{j+1} } r_j  + \frac{\sigma_{j-1}}{\sigma_{j+1} } d_{j-1} . 
\en
We can set: 
$\rho_j  \equiv  \frac{ \sigma_j }{ \sigma_{j+1} } , \quad j=1,2, \ldots ,$ 
and invoke  the relation ~\nref{eq:sigmaj} to get 
$\rho_j = 1 / [2 \sigma_1 - \rho_{j-1}]$ and  then \nref{eq:Rec2} becomes
$d_j = \frac{\rho_j}{\delta} r_j  + \frac{\rho_j}{\rho_{j-1}} d_{j-1}$.
This leads to Algorithm~\ref{alg:ChebAcc}.

%%% =========== ALT  == CHOOSE one option. 

A second way to obtain a recurrence relation for the iterates $x_j$ is to 
write the  error as $x_{j+1} - x_* = T_{j+1}(A) (x_0 - x_*) $ and then
exploit  \nref{eq:Rec0}:
\begin{align} 
  \sigma_{j+1} (x_{j+1} - x_*)
  &= 2 \ \frac{\theta I - A}{ \delta}   \sigma_j (x_j - x_*) - \sigma_{j-1} (x_{j-1} - x_*)  \\
  &= 2 \frac{\theta}{\delta} \sigma_j (x_j - x_*) +
    \frac{2 \sigma_j}{\delta} r_j - \sigma_{j-1} (x_{j-1} - x_*) 
\end{align}  
From the relation \nref{eq:sigmaj}, we see that the terms in $x_*$ cancel out and we get:
\eq{eq:RecA1}
\sigma_{j+1} x_{j+1} 
= 2 \frac{\theta}{\delta} \sigma_j x_j + \frac{2 \sigma_j}{\delta} r_j - \sigma_{j-1} x_{j-1} .
\en
Finally, invoking \nref{eq:sigmaj} again we can write 
$ 2 \frac{\theta}{\delta} \sigma_j = \sigma_{j+1} + \sigma_{j-1}$ and hence:
\eq{eq:RecA2}
\sigma_{j+1} x_{j+1} 
= \sigma_{j+1} x_j + \sigma_{j-1} (x_j - x_{j-1}) + \frac{2 \sigma_j}{\delta} r_j \ .
\en

\begin{algorithm}[tb]
  \centering
  \caption{Chebyshev Acceleration}\label{alg:ChebAcc}
    \begin{algorithmic}[1]
      \State \textbf{Input}: System $A, b$, initial guess $x_0$ and parameters $\delta, \theta$\\ 
      Set $r_0 \equiv b - Ax_0$, $\sigma_1 = \theta  / \delta $; 
      $\rho_0 = 1/\sigma_1 $ and $d_0 = \frac{ 1 }{ {\theta } } r_0 $;
      \For{$j=0,1,\cdots,$ until convergence:} 
      \State $x_{j+1}  =  x_j + d_j$
      \State $ r_{j+1} = r_j - A d_j$ 
      \State $ \rho_{j+1}  = ( 2 \sigma_1 - \rho_j )\inv$
      \State $    d_{j+1}   = \frac{ 2 \rho_{j+1} }{  \delta }  r_{j+1} + \rho_{j+1} \rho_j d_j $ 
      \EndFor
    \end{algorithmic}
    \end{algorithm}
    Note that  Lines 4 and 7 of the algorithm,
    can be merged in order to rewrite the iteration as follows: 
    \eq{eq:chebOneLine} 
      x_{j+1} = x_j + \rho_j \left[  \rho_{j-1} (x_j - x_{j-1}) + 
        \frac{  2   }{  \delta }  (b - A x_j) \right]
    \en
    which is a `second order iteration', of the same class as 
    \emph{momentum-type methods} seen in optimization and machine learning,
    to be discussed in the     next section.
This is a  common form used in particular by \cite{GolubVarga1961} in their
seminal work on ``semi-iterative methods''.
A major advantage of the  Chebyshev iterative method is that it does not require
any inner products. On the other hand, the scheme requires   estimates for the extremal
eigenvalue in order to set the sequence of scalars necessary for the iteration. 

It is easy to see that the
iteration parameters $\rho_j$ used in the algorithm converge to a limit.
Indeed, the usual formulas for Chebyshev polynomials show that 
\[
  \sigma_j = \text{ch} \left[ j \text{ch}\inv  \frac{\theta}{\delta} \right] =
  \frac{1}{2} \left[
    \left(\frac{\theta}{\delta} + \sqrt{\left(\frac{\theta}{\delta}\right)^2 -1} \right)^j
 + \left(\frac{\theta}{\delta} + \sqrt{\left(\frac{\theta}{\delta}\right)^2 -1} \right)^{-j}
\right]  . \]
%%      +     \left(\frac{\theta}{\delta} + \sqrt{\left(frac{\theta}{\delta})^2 -1 } \right)^{-k} \right]
      As a result, we have
      \eq{eq:limrho}
      \lim_{j \to \infty} \rho_j = \lim_{j \to \infty} \frac{\sigma_j}{\sigma_{j+1}}
      =  \left(\frac{\theta}{\delta} + \sqrt{\left(\frac{\theta}{\delta}\right)^2 -1} \right)^{-1}
      = \frac{\theta}{\delta} - \sqrt{\left(\frac{\theta}{\delta}\right)^2 -1} 
      \equiv \rho . 
      \en

      One can therefore consider replacing the scalars $\rho_j$ by their limit
      in the algorithm. This scheme, which we will refer to as the
      \emph{Stationary Chebyshev iteration}, typically results in a small
      reduction in convergence speed relative to the
      standard Chebyshev iteration  \cite{Kerkhoven-Saad}.

\subsection{An overview of Krylov subspace methods}\label{sec:Krylov}
Polynomial iterations of the type introduced by Richardson  lead to
a residual of the form~\eqref{eq:resPol2} where $p_{j+1}$ is a polynomial of degree $j+1$, see Section~\ref{sec:Richardson}.
The related approximate solution  at step $j+1$    is given in Equation~\eqref{eq:xjp1}.
The approximate solution $x_j$ at step $j$,  is of the form $x_0 + \delta $
 where $\delta $ belongs to the subspace
\eq{eq:Kj}
\K_j(A,r_0) = \Span \{r_0, A r_0, \cdots, A^{j-1} r_0 \} .
\en 

This is the \emph{$j$-th Krylov subspace}. As can be seen, $\K_j(A,r_0)$ is
nothing but the space of all vectors of the form $q(A)r_0$ where $q$ is an
arbitrary polynomial of degree not exceeding $j-1$.  When there is no ambiguity
we denote $\K_j(A,r_0)$ by $\K_j$.

Krylov subspace methods for solving a system of the form \nref{eq:Ax=b}, are
projection methods on the subspace \eqref{eq:Kj} and can be viewed as a form of
optimal polynomial acceleration implemented via a projection process.  We begin
with a brief discussion of projection methods.

  \subsubsection{Projection methods}\label{sec:proj}
The primary objective of a projection method is to extract an approximate
solution to a problem from a subspace. Suppose we wish to obtain a
solution $x \ \in \ \RR^n$ to a given problem $(P)$.  The problem is projected into
a problem $(\tilde P)$ set in a subspace $\K$ of $\RR^n$ from which we obtain
an approximate solution $\tilde x$. Typically, the dimension
$m$ of $\K$ is much smaller than $n$, the size of the system.

When applied to the solution of linear systems of equations, we assume the
knowledge of some initial guess $x_0$ to the solution and two subspaces $\K$ and
$\calL$ both of dimension $m$.
From these the following projected problem is formulated:
\eq{eq:Prox0}
\mbox{Find} \quad \tilde x = x_0 +  \delta, \ \delta \in \K \quad
\mbox{such that}\quad  b-A \tilde x \perp \calL  .
\en
We have  $m$ degrees of freedom (dimension of $\K$) and $m$ constraints (dimension of $\calL$),
and so \nref{eq:Prox0} will result in an $  m \times m $ linear system
which is nonsingular under certain mild conditions on $\K $ and $\calL$.
%%%tridiagonal or Hessenberg.
The Galerkin projection process just described satisfies important
\emph{optimality properties} that play an essential role in their analysis.
There are two important cases.

\def\tlx{\tilde x}

\paragraph{Orthogonal Projection (OP) methods.}
This case corresponds to the situation where the subspaces $\K$ and $\calL$ are the
same. In this scenario the residual $b - A \tilde x$ is orthogonal to $\K$ and 
the corresponding approximate solution is the closest vector
from affine space $x_0 + \K$ 
to the exact solution $x_*$, where distance is measured with  the $A$ norm:
$\| v \|_A = (Av,v)^{1/2}$.

\begin{proposition} \label{prop:opt0} 
Assume that $A$ is Symmetric Positive Definite and $\calL =\K$.
Then a vector $\tlx$ is the result of an (orthogonal) projection method
onto $\K$ with the starting vector $x_0$ if and only if  it 
minimizes the $A$-norm of the error over $x_0 +\K$, i.e.,
if and only if 
\[
E(\tlx ) = \min_{x \in x_0 +\K} E(x) , \] 
where 
\[ E(x) \equiv (A (x_* - x), x_* - x)^{1/2}   .  \] 
\end{proposition}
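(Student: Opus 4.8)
The plan is to recast the error-minimization problem as a best-approximation problem in the inner product induced by $A$, and then to recognize the associated orthogonality condition as precisely the Galerkin condition defining the projection method. First I would pass from the iterate to the error: for any $x \in x_0 + \K$ write $x = x_0 + \delta$ with $\delta \in \K$, set $d_0 = x_* - x_0$, and note that $x_* - x = d_0 - \delta$, so that $E(x)^2 = (A(d_0-\delta),\, d_0-\delta)$. Because $A$ is SPD, the bilinear form $\langle u,v\rangle_A \equiv (Au,v)$ is a genuine inner product on $\RR^n$ whose norm is exactly $\| \cdot \|_A$. Hence minimizing $E$ over $x_0 + \K$ is equivalent to finding the element of the subspace $\K$ closest to $d_0$ in the $A$-norm.

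Next I would invoke the projection theorem in the finite-dimensional inner-product space $(\RR^n, \langle\cdot,\cdot\rangle_A)$: the best approximation $\delta_* \in \K$ to $d_0$ exists, is unique, and is characterized by the orthogonality condition $\langle d_0 - \delta_*, w\rangle_A = 0$ for all $w \in \K$. If I wanted to avoid quoting this theorem, I could argue variationally instead: the map $\delta \mapsto \| d_0-\delta \|_A^2$ is strictly convex (again because $A$ is SPD), so a point of $\K$ is a global minimizer if and only if its directional derivatives along every $w \in \K$ vanish, which yields the same condition $\langle d_0 - \delta_*, w\rangle_A = 0$ for all $w \in \K$. Either route gives a genuine equivalence (an ``if and only if''), which is what the statement requires.

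The final and crucial step is to translate this $A$-orthogonality into the residual orthogonality that defines the orthogonal projection method. Writing $\tilde{x} = x_0 + \delta_*$, I would use the identity $\langle d_0 - \delta_*, w\rangle_A = (A(x_* - \tilde{x}), w) = (Ax_* - A\tilde{x}, w) = (b - A\tilde{x}, w)$, valid since $x_*$ solves $Ax = b$. Thus $d_0 - \delta_* \perp_A \K$ holds if and only if $b - A\tilde{x} \perp \K$ in the standard Euclidean inner product, and the latter is exactly condition \nref{eq:Prox0} in the case $\calL = \K$. Chaining the equivalences gives the conclusion: $\tilde{x}$ minimizes $E$ over $x_0 + \K$ if and only if $\tilde{x}$ is the orthogonal projection solution. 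I do not anticipate a substantive obstacle here; the only points requiring care are to use SPD-ness twice — once to make $\| \cdot \|_A$ an honest norm so that the best-approximation characterization is an equivalence rather than merely a necessary condition, and once to secure uniqueness of $\delta_*$ — and to keep the two inner products (the standard one and the $A$-weighted one) carefully distinct when moving between the optimality and the Galerkin formulations.
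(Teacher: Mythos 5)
Your proof is correct; note, though, that the paper itself states Proposition~\ref{prop:opt0} without proof, quoting it as a standard result (cf.\ \cite{Saad-book2}). Your argument — recasting the minimization as the best approximation of $d_0 = x_* - x_0$ from $\K$ in the $A$-inner product, characterizing it by $A$-orthogonality of the error to $\K$, and then translating $(A(x_* - \tilde x), w) = 0$ into the Euclidean Galerkin condition $(b - A\tilde x, w) = 0$ for all $w \in \K$ — is precisely the classical proof of this equivalence, with SPD-ness correctly invoked both to make $\|\cdot\|_A$ a genuine norm (so the characterization is an if-and-only-if) and to ensure uniqueness of the minimizer.
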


The necessary condition means the following:
\[
  \calL = \K \quad \text{and} \quad A \ \text{SPD}  \longrightarrow
  \| x_*  -  \tilde x \|_A = \min_{ x \in x_0 + \K } \| x_* - x  \|_A . \]

It is interesting to note that Richardson's scheme shown in Equation
\nref{eq:mr} can be cast to include another prominent one-dimensional projection
method namely the well-known steepest descent algorithm. Here, $A$
is assumed to be symmetric positive definite and at each step the algorithm   
computes the vector of the form $x=x_{j} + \gamma r_j$ where $r_j = b - Ax_j$,
that satisfies the orthogonality condition $b - A x \perp r_j$. 
This leads to selecting at each
step $\gamma = \gamma_j \equiv (r_j,r_j)/(A r_j, r_j)$ which, according to the
above proposition, minimizes
$\| x - x_* \|_A $ over $\gamma$.  Another method in this category is the Conjugate
Gradient method which will be covered shortly.

\paragraph{Minimal Residual (MR) methods.}
This case  corresponds to the situation when 
$\calL = A \K$.
It can be shown that if $A$ is nonsingular, then $\tilde x$ minimizes the
Euclidean norm of the residual over the affine space $x_0 + \K$.

  \begin{proposition}\label{prop:opt2} 
Let  $A$ be  an arbitrary  square matrix and assume that  $\calL =A \K$.
Then a vector $\tlx$ is the result of an (oblique)
projection method onto $\K$ orthogonally to $\calL$  with the starting
vector $x_0$ if and only if it minimizes the $2$-norm of the 
residual vector $b - A x $ over $x  \ \in \ x_0 +\K$, i.e., if and
only if 
\[
R(\tlx ) = \min_{x \in x_0 +\K} R(x)   , \]
where
$R(x) \equiv  \| b - A x \|_2 $. 
\end{proposition}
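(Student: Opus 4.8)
The plan is to reduce the claim to the classical best-approximation (projection) theorem for the Euclidean inner product on $\RR^n$, applied not to the iterates themselves but to their residuals. First I would pass from iterates to residuals. Writing any candidate $x \in x_0 + \K$ as $x = x_0 + \delta$ with $\delta \in \K$, and setting $r_0 = b - A x_0$, the associated residual is
\[
  b - A x = r_0 - A\delta .
\]
Since $\delta$ ranges over all of $\K$, the vector $A\delta$ ranges over all of $\calL = A\K$. Hence the set of residuals attainable over the affine space $x_0 + \K$ is exactly $r_0 - \calL = r_0 + \calL$, the affine subspace through $r_0$ parallel to $\calL$. Minimizing $R(x) = \|b - Ax\|_2$ over $x \in x_0 + \K$ is therefore identical to the problem of finding the best Euclidean approximation $p^\star \in \calL$ to $r_0$, the optimal residual being $r_0 - p^\star$.

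Second, I would invoke the projection theorem: a vector $p \in \calL$ is a best Euclidean approximation to $r_0$ from $\calL$ if and only if the discrepancy $r_0 - p$ is orthogonal to $\calL$. Taking $p = A\delta$ (so that $x = x_0 + \delta$ and $r_0 - p = b - A x$), this orthogonality reads
\[
  b - A x \perp \calL ,
\]
which is precisely the defining condition \eqref{eq:Prox0} of the oblique projection method onto $\K$ orthogonally to $\calL$. Reading this equivalence in both directions yields the stated ``if and only if'': a vector $\tilde x$ satisfies the Petrov--Galerkin condition $b - A\tilde x \perp \calL$ exactly when its residual is the shortest element of $r_0 + \calL$, i.e. exactly when $R(\tilde x) = \min_{x \in x_0 + \K} R(x)$. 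The argument is structurally parallel to that of Proposition~\ref{prop:opt0}, with the $A$-inner product there replaced here by the ordinary Euclidean inner product acting in the residual (image) space.

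The one point requiring care, and the one I would flag as the main obstacle, is the distinction between uniqueness of the residual and uniqueness of $\tilde x$. The best approximation $p^\star \in \calL$ to $r_0$, and hence the optimal residual $b - A\tilde x = r_0 - p^\star$, is always unique, so the minimal residual \emph{norm} is well defined even when $A$ is an arbitrary, possibly singular, square matrix. The minimizing vector $\tilde x$ itself, however, need not be unique: if $\K \cap \ker A \neq \{0\}$, distinct increments $\delta \in \K$ can produce the same $A\delta = p^\star$, and hence the same residual. Because the proposition is phrased as a characterization of \emph{which} vectors qualify, this causes no difficulty, since both the projection condition and the residual-minimization property single out exactly the same set $\{\, x_0 + \delta : A\delta = p^\star \,\}$. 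Under the additional hypothesis that $A$ is nonsingular, $A$ is injective on $\K$, the increment $\delta$ is determined uniquely, and $\tilde x$ becomes the unique minimizer, recovering the familiar statement for MR methods.
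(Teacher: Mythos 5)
Your proof is correct. Note first that the paper itself states Proposition~\ref{prop:opt2} without proof, treating it (like Proposition~\ref{prop:opt0}) as a classical fact about projection methods, so there is no internal argument to compare against; your proposal supplies exactly the standard textbook argument. The reduction is sound: residuals over $x_0+\K$ sweep out the affine set $r_0 - \calL$ precisely because $\calL$ is \emph{defined} as $A\K$, and the best-approximation (projection) theorem in the Euclidean inner product converts minimality of $\|r_0 - p\|_2$ over $p\in\calL$ into the orthogonality $r_0 - p^\star \perp \calL$, which is the Petrov--Galerkin condition \eqref{eq:Prox0}. Your closing observation is also well taken and slightly sharpens the statement: since the proposition allows an arbitrary (possibly singular) square $A$, the optimal residual is unique but the minimizer $\tlx$ need not be, and both characterizations select the same solution set $\{x_0+\delta : A\delta = p^\star,\ \delta\in\K\}$; nonsingularity of $A$ (as assumed in the paper's follow-up remark on MR methods) is exactly what restores uniqueness of $\tlx$.
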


The necessary condition now means the following:
\[ \calL = A \K \quad \text{and} \quad A \  \text{nonsingular}\  \longrightarrow 
    \| b - A \tilde x \|_2 = \min_{ x \in x_0 + \K } \| b - A x \|_2 . \]
Methods in this category include the Conjugate Residual (CR) method,
the Generalized Conjugate Residual (GCR) method, and GMRES among others, see
Section~\ref{sec:MR}.

  Another instance of Richardson's general iteration of the form 
  \nref{eq:mr} is the \emph{Minimal Residual iteration} (MR)
where $\gamma_j $ is selected as the value of $\gamma$ that minimizes
the next residual norm: $\| b - A (x_j + \gamma r_j)\|_2$. A little calculation
will show that the optimal $\gamma$ is 
$\gamma_j = (r_j, A r_j)/ (Ar_j, A r_j)$, where it is assumed that
$Ar_j \ne 0$, see, e.g., \cite{Saad-book2}.
MR is a one-dimensional projection method since it computes a vector of the form
$x_j + d$ where $ d \ \in \XX = \Span \{ r_j \} $ which satisfies the
orthogonality condition $b - A x \perp A \XX$.

\subsubsection{OP-Krylov and the Conjugate Gradient method}
One particularly important instance in the OP class of projection methods  is the
\emph{Conjugate Gradient  algorithm} (CG) algorithm,  a clever
implementation of the case where $\K=\calL$ is  a Krylov subspace
of the form \nref{eq:Kj} and $A$ is SPD. This implementation is shown
in Algorithm~\ref{alg:CG}.

\begin{algorithm}[htb]
  \caption{Conjugate Gradient} \label{alg:CG}  
    \begin{algorithmic}[1]
   \State Compute $r_0   :=   b -A x_0 $,  $p_0   := r_0$.  
   \For {$j=0,1,\ldots$,   until convergence Do:}  
   \State   $ \alpha_j   :=   ( r_j  ,  r_j )  /  ( Ap_j  ,  p_j )$ 
   \State     $ x_{j+1}   :=   x_j  +  \alpha_j   p_j $ 
   \State     $ r_{j+1}   :=   r_j  -  \alpha_j   Ap_j $ 
   \State    $\beta_j  :=   (r_{j+1}  ,  r_{j+1} )  /  ( r_j ,  r_j ) $ 
   \State    $ p_{j+1} :=   r_{j+1}  + \beta_j   p_j $ 
   \EndFor
   \end{algorithmic} 
\end{algorithm}

The discovery of CG \cite{Hestenes-Stiefel} was a major breakthrough in the
early 1950s.  The original CG article was co-authored by Magnus Hestenes [UCLA]
and Eduard Stiefel [ETH], but these authors made the discovery independently
and, as they learned of each other's work, decided to publish the paper
together, see \cite{KrylovHist22} for a brief history of Krylov methods.

Nowadays the CG algorithm is often presented as a projection method on Krylov
subspaces, but in their paper Hestenes and Stiefel invoked purely geometric
arguments as their insight from the 2-dimensional case and knowledge of `conics'
led them to the notion of \emph{`conjugate directions'}.  The goal is to find
the minimum of $f(x) = \half (Ax,x) - (b,x)$. In $\RR^2$ the contour lines of
$f(x)$, i.e., the sets $\{ x | f(x) = \kappa \} $ where $\kappa$ is a constant,
are con-focal ellipses the center of which is $x_*$  the desired solution.
    
\begin{figure}[h]
\begin{center}
  \includegraphics[width=0.5\textwidth]{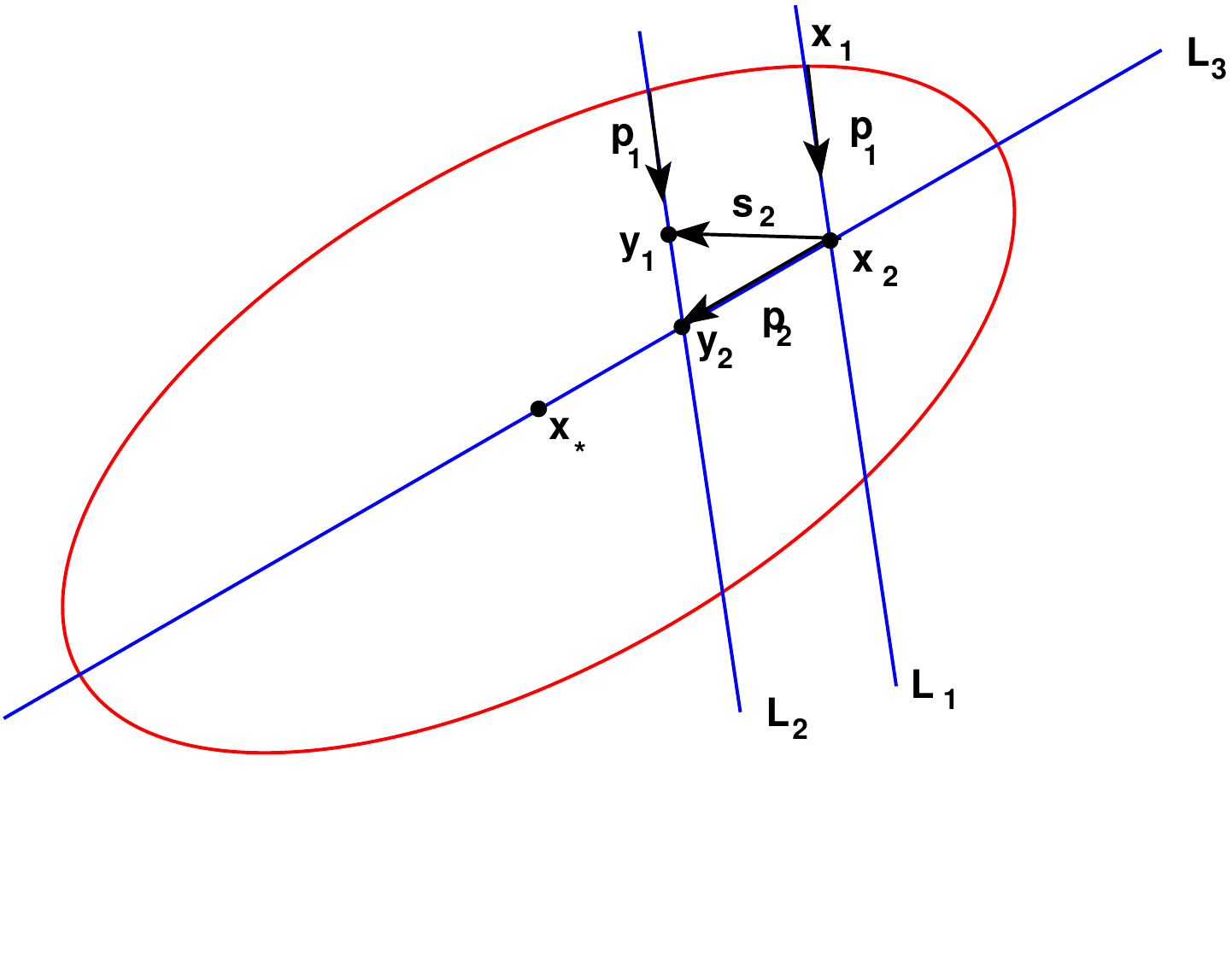} 
\end{center}
\caption{Illustration of the construction of conjugate gradient directions. 
  This illustration is based on the reference \cite{HestTodd91}}\label{fig:cgfig}
\end{figure}

Figure~\ref{fig:cgfig} provides an illustration borrowed from \cite{HestTodd91}.
The minimum of $f(x) $ on a 
given chord of the ellipse is reached in the middle of the chord.
The middles $x_2, y_2$ of  two given parallel chords $L_1, L_2$,  
will define a line $L_3$ that passes through the center $x_*$ of the ellipse.
The minimum of $f(x)$ along $L_3$ will be
at the center, i.e., at the exact solution.  If $x_2 = x_1 + \alpha_1 p_1$, is
an iterate and $y_1 = x_2 + s_2$, an intermediate iterate, the line $L_2$:
$y(t) = y_1 + t p_1$, is parallel to $L_1$. Its minimum is reached at a point
$y_2 = y_1 + \beta_1 p_1$, and the direction $p_2 = y_2 - x_2 \equiv p_1 + \beta_1 s_2$
is conjugate to $p_1$.  In the CG
algorithm, the direction $s_2$ is taken to be the residual $r_2$ of $x_2$.
%% P. 55-56 of nist article.. 

At the same time as the CG work was unfolding, Cornelius Lanczos, who also
worked at the INA, developed a similar method using a different notation and
viewpoint \cite{Lanczos52}.  This was a minimal residual approach (MR method
discussed below),
implemented with the use of what we now call a Lanczos basis \cite{Lanczos50}
for solving eigenvalue problems, another major
contribution to numerical linear algebra from the same era.

The Conjugate Gradient algorithm was not too well received initially.  It was
viewed as an unstable direct solution method and for this reason it laid dormant
for about two decades. Two important articles played a role in its revival. The
first one was by John \cite{Reid71} who discovered that the method could be
rather effective when used as an iterative procedure. At the same time, work by
\cite{Paige-thesis,paig:80a}, analyzed the Lanczos process for
eigenvalue problems from the point
of view of inexact arithmetic, and this work gave a much better understanding of
how the loss of orthogonality affected the process. The same loss of
orthogonality affected the CG method and lead to its initial rejection by
numerical analysts.

\subsubsection{MR-Krylov methods, GCR, GMRES}\label{sec:MR} 
Quite a few  projection methods on the Krylov subspaces $\K_j (A,r_0)$
were developed starting in the late 1970s, with the objective of
minimizing the residual norm. As was seen earlier this
optimality condition is equivalent to the property that the approximate solution
$\tilde x \in \K_j(A,r_0)$ be orthogonal to the subspace $ A \K_j(A,r_0)$.

%%  $\to$  { $\tilde x = q (A) b$}  \quad where $q \equiv $ pol. of degree $m-1$
%%Given the success if the CG algorithm a flurry of activity

 Implementations with orthonormal basis of $K_m$ lead to the Generalized Minimal
 Residual (GMRES) method \cite{Saad-Schultz-GMRES}, a procedure based on the
 Arnoldi process. Other implementations included Axelsson's GCG-LS method
 \cite{Axelsson80}, ORTHOMIN (Vinsome \cite{Vinsome76}), ORTHODIR (Young and
 Jea, \cite{Jea-Young}), the Generalized Conjugate Residual Method (GCR) of
 \cite{Eis-Elm-Sch}, among others.  A rather
 exhaustive treatise on this work can be found in the book by \cite{Meurant-Tebbens}.
We now briefly discuss the Generalized Conjugate Residual (GCR) method for
solving the system \nref{eq:Ax=b}, since this approach will be exploited later.

The original GCR algorithm \cite{Eis-Elm-Sch} for solving \nref{eq:Ax=b} exploits  an
orthogonal basis of the subspace $A \K_j(A,r_0)$.
The $j$-th step of the procedure can be described as follows. 
Assume that we already have vectors
$p_0, p_0, \cdots, p_{j}$ that are $A^T A $-orthogonal, i.e., such that
$(A p_i, Ap_k) = 0 \ \mbox{if} \ i\ne k$ for $i,k \le j$.
Thus, the set $\{p_i\}_{i=0,\cdots,j}$ forms an $A^TA$ -orthogonal 
basis of the Krylov subspace $\K_{j+1}$.  
In this situation the solution $x_{j+1}$ at the current iteration, i.e., the one
that minimizes the residual norm in
$x_0 + \K_{j+1} = x_0 + \Span \{p_0, p_1, \cdots, p_{j} \}$ becomes easy to express.
It can be written as follows, see, \cite[Lemma 6.21]{Saad-book2} and
Lemma~\ref{lem:gcr} below:
\eq{eq:gcr-sol} x_{j+1} = x_{j} + \frac{(r_{j}, A p_{j})}{(A p_{j},A
  p_{j})} p_{j} ,  \en
where $x_j$ is the current iterate, and  $r_j$  the related residual $r_j = b - Ax_j$.

%% We now need
Once $x_{j+1}$ is computed, the next basis vector, i.e., $p_{j+1}$,  is obtained from 
$A^TA$-orthogonalizing the residual vector $r_{j+1} = b - A x_{j+1}$
 against the previous $p_i$'s by the loop:
\eq{eq:AtA-orth}
   p_{j+1} = r_{j+1} - \sum_{i=0}^{j} \beta_{ij} p_i \quad \mbox{where} \quad
   \beta_{ij} :=  ( A r_{j+1} , Ap_i) / ( A p_i, Ap_i) . 
\en 
The above describes in a succinct way one step of the algorithm. Practically it
is necessary to keep a set of vectors for the $p_i$'s and another set for
the $Ap_i$'s. We will set $v_i = A p_i$ in what follows.
In the classical (`full') version of GCR, these sets are denoted by
\eq{eq:PjVj}
P_j=[p_0, p_1,\cdots, p_j] ; \qquad V_j=[v_0, v_1,\cdots, v_j] .
\en 
Note here are that \emph{all} previous search directions $p_i$ and the
corresponding $v_i$'s must be saved in this full version.

We will bring two modifications to the basic procedure just described. The first is to
 introduce a  \emph{``truncated''} version of the algorithm whereby
 only the most recent $\min \{m,j+1 \}$ vectors $\{p_i\}$ and $\{Ap_i\}$ are kept.
 Thus, the summation in \nref{eq:AtA-orth} starts at $i= [j-m+1]$ instead of $i=0$,
 i.e., it starts at $i=0$ for $j<m$ and at $i=j-m+1$ otherwise.
 The sets $P_j, V_j$ in \nref{eq:PjVj} are replaced by:
 \eq{eq:PjVjA} 
P_j=[p_{[j-m+1]},\cdots, p_j] ; \qquad V_j=[v_{[j-m+1]},\cdots, v_j] .
\en

The second change is to 
make the set of $v_i$'s \emph{orthonormal}, i.e., such that $(v_j, v_i) = \delta_{ij}$.
Thus, the new vector $v_{j+1} $ is made orthonormal to
$ v_{j-m+1},v_{j-m+2},\cdots, v_{j},$ when
$j-m+1\ge 0$ or to  $v_{0},v_{1},\cdots, v_{j}$ otherwise.

\begin{algorithm}[htb]
  \centering
  \caption{TGCR(m)}\label{alg:gcr}
    \begin{algorithmic}[1]
  \State \textbf{Input}: Matrix $A$, Right-hand side $b$,
  initial guess  $x_0$. \\
  Set $r_0 \equiv b-Ax_0$; $v = A r_0$; 
  \State $v_0 = v/\| v \|_2$;   $p_0 = r_0 / \| v \|_2$; 
  \For{$j=0,1,2,\cdots,$ Until convergence} 
\State  $\alpha_j = \langle r_j, v_j\rangle $
\State $x_{j+1} = x_j + \alpha_j p_j$
\State $r_{j+1} = r_j - \alpha_j v_j$
\State $p = r_{j+1}$;  \quad  $ v = A p $;
\If{j$>0$}  [with: $V_j, P_j$ defined in \nref{eq:PjVjA}]
\State Compute $s = V_j^T v$
\State Compute $v = v - V_js $ and $p = p - P_j s$
  \EndIf
\State $p_{j+1} :=p /\| v\|_2$ ; \qquad  $v_{j+1} :=v/\|v\|_2$ ; 
\EndFor
\end{algorithmic}
\end{algorithm}

Regarding notation, it may be helpful to observe that the last column of $P_j$
(resp. $V_j$) is always $p_j$ (resp. $v_j$) and that the number of columns of
$P_j$ (and also $V_j$) is $\min \{m,j+1\}$.  Note also that in practice, it
is preferable to replace the orthogonalization steps in Lines~10-11 of the
algorithm by a modified Gram-Schmidt procedure.

The following lemma explains why the update to the solution takes the simple
form of Equation~\ref{eq:gcr-sol}.  This result was stated in a slightly
different form in \cite[Lemma 6.21]{Saad-book2}.

  \begin{lemma}\label{lem:gcr} 
Assume that we are solving the linear system $Ax=b$ and
    let $\{ p_i, v_i \}_{i=[j-m+1]:j}$  be a paired set of vectors with
    $v_i = A p_i, i=[j-m+1], \cdots,   j$.  Assume also   that
    the set  $V_j $ is orthonormal. Then the solution vector of the affine space
    $x_{j  }  + \Span \{ P_j \}$ with smallest residual norm is 
    $x_{j  } + P_j y_j $ where $y_j = V_j^T r_j $. In addition,
    only the bottom component of $y_j$, namely $v_j^T r_j $, is nonzero.
%%%    $y_j $ has only one nonzero component, 
  \end{lemma}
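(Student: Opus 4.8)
The plan is to separate the statement into a pure least-squares fact and an orthogonality property inherited from the way the iterates are built. First I would reduce the minimization over the affine space to a small least-squares problem. Every vector of $x_j + \Span\{P_j\}$ can be written as $x = x_j + P_j y$ for some coefficient vector $y$, and since $v_i = A p_i$ for each column we have $A P_j = V_j$. Hence the residual of such an $x$ is
\[
b - Ax = r_j - A P_j y = r_j - V_j y ,
\]
so minimizing $\|b-Ax\|_2$ over the affine space is exactly the problem $\min_y \|r_j - V_j y\|_2$.

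Next I would solve this least-squares problem using the orthonormality hypothesis. Because the columns of $V_j$ are orthonormal, $V_j^T V_j = I$, and the normal equations $V_j^T V_j\, y = V_j^T r_j$ collapse to $y_j = V_j^T r_j$; equivalently, $V_j V_j^T r_j$ is the orthogonal projection of $r_j$ onto $\Span\{V_j\}$, and the optimal residual $r_j - V_j V_j^T r_j$ is orthogonal to $\Span\{V_j\}$. This already gives both the minimizing vector $x_j + P_j y_j$ and the formula $y_j = V_j^T r_j$, establishing the first assertion.

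The remaining claim is that only the last entry of $y_j$ survives. Note that the $i$-th component of $y_j = V_j^T r_j$ is $v_i^T r_j$, so I must show $v_i^T r_j = 0$ for $[j-m+1]\le i \le j-1$. Here I would invoke the construction of the residuals in the algorithm, namely $r_{i+1} = r_i - \alpha_i v_i$ with $\alpha_i = v_i^T r_i$ (Lines~4 and~6 of Algorithm~\ref{alg:gcr}). This immediately yields $v_i^T r_{i+1} = v_i^T r_i - \alpha_i \|v_i\|_2^2 = 0$. Propagating forward through $r_j = r_{i+1} - \sum_{k=i+1}^{j-1}\alpha_k v_k$ gives $v_i^T r_j = -\sum_{k=i+1}^{j-1}\alpha_k (v_i^T v_k)$, and each cross term $v_i^T v_k$ vanishes because, when $v_k$ was generated, $v_i$ still belonged to the active window and $v_k$ was explicitly orthonormalized against it. Consequently $y_j$ has only its bottom component $v_j^T r_j$ nonzero, and $x_j + P_j y_j = x_j + (v_j^T r_j)\,p_j$, which reproduces \nref{eq:gcr-sol}.

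The main obstacle is the bookkeeping in this last step, since in the truncated scheme the full sequence of $v_k$'s is only \emph{locally} orthonormal (window by window), not globally. I would therefore verify the index ranges carefully: for $[j-m+1]\le i \le k-1 \le j-1$, the index $i$ indeed lies in the window $V_{k-1}$ used to build $v_{k}$, so the window-orthonormality maintained by the algorithm guarantees $v_i^T v_k = 0$ for precisely the pairs $(i,k)$ that appear in the sum. Everything else is routine linear algebra; this index check is the only delicate point.
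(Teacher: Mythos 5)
Your proof is correct, and its first half — rewriting every element of the affine space as $x_j + P_j y$, noting $AP_j = V_j$, and collapsing the normal equations via $V_j^T V_j = I$ to get $y_j = V_j^T r_j$ — is exactly the paper's argument. Where you genuinely diverge is the second claim. The paper proves it by a one-step forward shift: since the first part shows the minimizer's residual satisfies $V_j^T r_{j+1} = 0$, and all columns of $V_{j+1}$ except $v_{j+1}$ are also columns of $V_j$, the vector $y_{j+1} = V_{j+1}^T r_{j+1}$ has only its last entry nonzero; optimality at step $j$ hands the sparsity statement to step $j+1$. You instead unroll the residual recurrence $r_j = r_{i+1} - \sum_{k=i+1}^{j-1} \alpha_k v_k$, kill the leading term with the one-step orthogonality $v_i^T r_{i+1} = 0$ (which uses $\|v_i\|_2 = 1$), and kill each cross term with the pairwise orthogonality $v_i^T v_k = 0$ enforced by Gram-Schmidt when $v_k$ was created, backed by the window-membership check $[j-m+1] \ge [k-m]$. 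Both routes necessarily lean on the algorithmic construction (the sparsity claim is false for an arbitrary $x_j$), but on different pieces of it: the paper's is shorter and reuses the optimality condition, though it silently identifies the algorithm's $r_{j+1}$ with the minimizer's residual — an identification that itself needs the sparsity claim at step $j$, so an induction is really present even if unstated. Your version makes the truncation bookkeeping explicit; the index check is precisely where one sees that the sliding window never discards a vector still needed for orthogonality, a point the paper's terse ``$i \le j$'' glosses over. (Incidentally, the scalar update you quote lives in Lines 5--7 of Algorithm~\ref{alg:gcr}, not Lines 4 and 6 — a citation slip, not a mathematical one.)
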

  \begin{proof} %%%% \ys{CHECK = see nltgcr paper }
    The residual of  $x = x_j + P_j y$ is  $r = r_j - A P_j y = r_j - V_j y$ and
    its norm is smallest when $V_j^T r = 0$. Hence the 1st result.
    Next, this condition implies that the 
    inner products $v_i^T r_{j+1}$ are all zero
    when $i \le j$, so the vector $y_{j+1}=V_{j+1}^T r_{j+1} $ at the next iteration will have
    only one nonzero component, namely its last one,
    i.e., $v_{j+1}^T r_{j+1}$. This proves the second result for the index
    $j+1$ replaced by $j$.   
  \end{proof}
  It follows from the lemma that the approximate solution obtained by a
  projection method that minimizes the residual norm in the affine space
  $x_j + \Span\{P_j\}$ can be written in the simple form
  $x_{j+1} = x_j + \alpha_j p_j$ where $\alpha_j = v_j^T r_j$. This explains formula
  \nref{eq:gcr-sol} shown above and Lines 5-6 of 
  Algorithm~\ref{alg:gcr} when we take into consideration the orthonormality
  of the $v_i$'s.

\iffalse 
The iterate $j$ iterate $x_j$ defined by the algorithm projects....
$K= \Span \{ p_0, \cdots, p_j\}$ orthogonally to $AK$. Therefore, according 
to Proposition~\ref{prop:opt2} it yields the smallest residual norm
among solution vectors selected from  $x_0 + \mbox{span}\{p_0, \ldots, p_{j-1}\}$,
which is the same as the affine  Krylov subspace  $x_0 + \K_j(A,r_0)$.
\fi 

We will call the `full-window' case of the algorithm the situation when there is no
truncation. This is equivalent to setting  $m=\infty$ in the algorithm.
The truncated variation to GCR $(m<\infty)$, which we will call Truncated GCR (TGCR)
was first introduced in \cite{Vinsome76} and was named `ORTHOMIN'.
\cite{Eis-Elm-Sch}  established a number of results for both the full
window case ($m=\infty$) and the truncated case ($m<\infty$) including, a
convergence analysis for the situation when $A$ is positive real, i.e., when its 
symmetric part is positive definite.
In addition to the orthogonality of the vectors
$Ap_i$, or  equivalently the $A^TA$-orthogonality of the $p_i$'s, 
 another property of (full) GCR is that the residual vectors
that it produces are `semi-conjugate' in that $ (r_i, Ar_j ) = 0$ for $i > j$.

Note that when $j\ge m$ in TGCR $(m<\infty)$, then the approximate solution
$x_j$ no longer minimizes the residual norm over the whole Krylov subspace
$x_0+\K_j(A,r_0)$ but only over $x_{j-m} + \mbox{span} \{p_{j-m},\cdots p_j\}$, see
~\cite[Th. 4.1]{Eis-Elm-Sch}.

\subsection{Momentum-based techniques}

\iffalse
TO ADD: acceleration form and idea of momentum.. 1st order vs. 2nd order and why this is a misnomer in this case.
2nd order should mean superlinear convergence -- not here.

Blurred line between acceleration and `optimal' iterative procedure. Methods like CG,
GMRES, Chebyshev, ... are termed `accelerators'. No idea where term ``semi-iterative' comes
from.
\fi

The Chebyshev iteration provides a good introduction to the notion of
\emph{momemtum}. It is sufficient to 
 frame the method for an optimization problem, where we seek to minimize
 the quadratic function $\phi(x) = \frac{1}{2} x^T A x - b^T x$ where $A$ is SPD.
In this case $\nabla \phi(x) = Ax - b$ which is the negative of the residual. 
With this we see that Chebyshev iteration \nref{eq:chebOneLine} can be written as:
\eq{eq:mmt1}
x_{j+1} = x_j + \eta_j \Delta x_{j-1} -\nu_j \nabla \phi(x_j)
\en
where $\eta_j = \rho_j \rho_{j-1}$ and $\nu_j = 2 \rho_j /\delta$.
Recall the notation: $\Delta x_{j-1} = x_j - x_{j-1}$.

\subsubsection{The `heavy-ball' method}\label{sec:HeavyBall}
Equation~\nref{eq:mmt1} is the general form of a gradient-type method \emph{with
  momentum}, whereby the next iterate $x_{j+1}$ is a combination of the term
$x_j - \nu_j \nabla \phi(x_j) $, which can be viewed as a standard gradient-type
iterate, and the previous increment, i.e., the difference $x_j-x_{j-1}$. This
difference $\Delta x_{j-1}$ is often termed \emph{`velocity'} and denoted by
$v_{j-1}$ in the literature.  Thus, a method with momentum takes the gradient
iterate from the current point and adds a multiple of \emph{velocity}.  A
comparison is often made with a mechanical system representing a ball rolling
downhill.  Often the term \emph{heavy-ball} method is used to describe the
iteration \nref{eq:mmt1} in which the coefficients $\eta_j, \mu_j$ are constant.
An illustration is provided in Figure~\ref{fig:mtm}.

It is common to rewrite Equation~\nref{eq:mmt1} by explicitly invoking a
momentum part.  This can be done by defining
$v_j \equiv \Delta x_j = x_{j+1} -x_{j}$. Then the update \nref{eq:mmt1} can be
written in two parts as \eq{eq:mmtV1} \left\{ \begin{array}{lcl}
                                                v_j &=& \eta_j v_{j-1} - \nu_j \nabla \phi(x_j) \\
                                                x_{j+1} &=& x_j + v_j \ .
\end{array} \right.  
\en
However, often the velocity  $v_j$ is defined with the opposite sign
\footnote{The motivation here is that when $\eta = 0$, which corresponds to the gradient method
without momentum, the vector $v_j$ in the update $x_{j+1} = x_j + v_j$   should be
a negative multiple of the gradient of $\phi$, so changing the sign makes $v_j$ a positive multiple of the
gradient.}, i.e., 
 $v_j \equiv - \Delta x_j =  x_j - x_{j+1}$,  in which case  the update \nref{eq:mmt1} becomes
\eq{eq:mmtV2}
 \left\{ \begin{array}{lcl}
  v_j &=& \eta_j v_{j-1} + \nu_j \nabla \phi(x_j) \\
  x_{j+1} &=& x_j - v_j .
\end{array} \right.
\en
Both of expressions \nref{eq:mmtV1} and \nref{eq:mmtV2} can be found in the
literature but we will utilize \nref{eq:mmtV2} which is equivalent to a form
seen in Machine Learning (ML). In ML, the scalar parameters of the
sequence are constant, i.e., $\eta_j \equiv \eta, \nu_j \equiv \nu$ and the
velocity vector $v_j$ is often scaled by $\nu$, i.e., we set $v_j = \nu w_j $ upon
which \nref{eq:mmtV2} becomes
\eq{eq:mmtV2a}
\left\{ \begin{array}{lcl}
          w_j &=& \eta w_{j-1} + \nabla \phi(x_j) \\
          x_{j+1} &=& x_j - \nu w_j \ .
\end{array} \right. 
\en
In this way, $w_j$ is just  a damped average of previous gradients where the
damping coefficient is a power of $\eta$ that gives more weight to recent gradients.
In Deep Learning, the gradient is actually a sampled gradient corresponding to a `batch' of
functions that make up the final objective function, see, Section~\ref{sec:stoch}.

\begin{figure}
  \centering{
\includegraphics[width=0.6\textwidth]{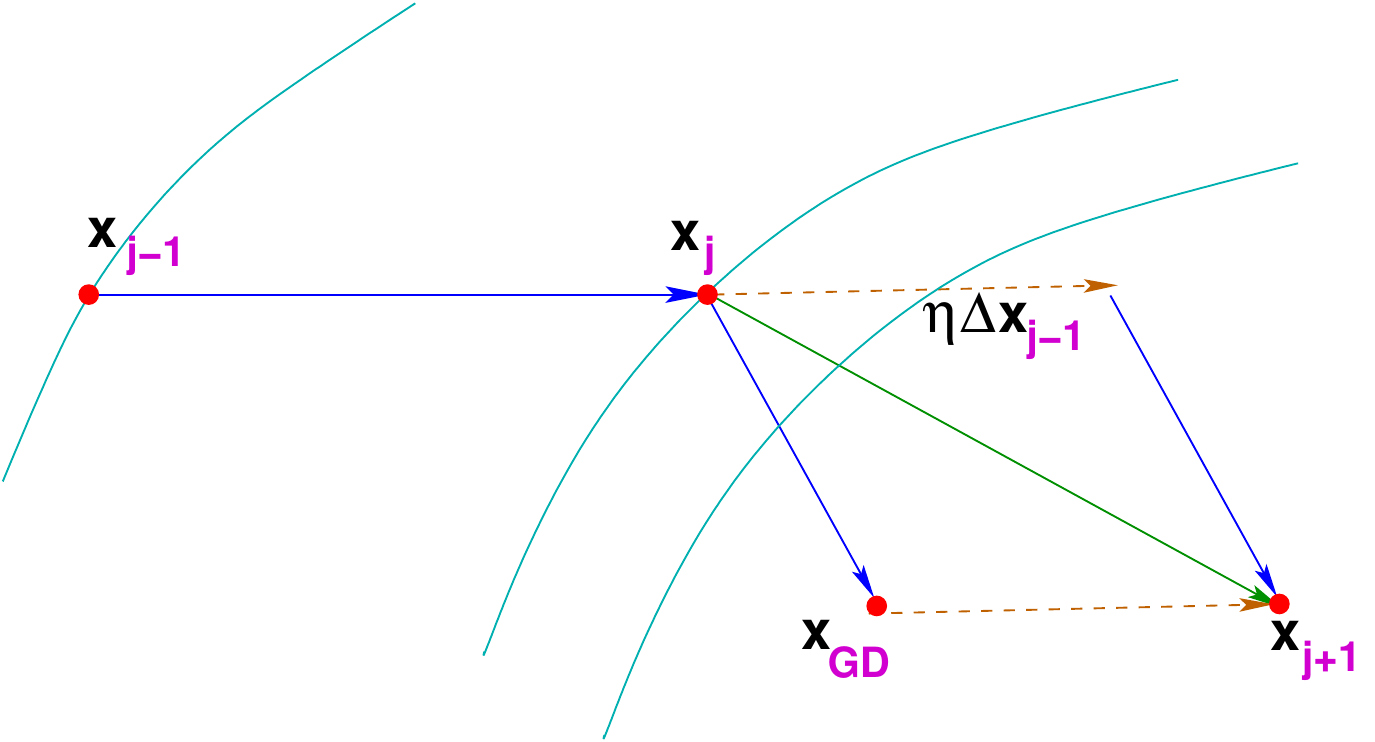}}
\caption{Illustration of the gradient method with momentum
}\label{fig:mtm} 
\end{figure}

\subsubsection{Convergence}\label{sec:convCheb}
The convergence of Chebyshev iteration for linear problems 
is well understood, see, e.g.,
\cite{Saad-book3}. Here we consider, more generally, the momentum scheme \nref{eq:mmt1} but
we restrict our study to  the particular case
where the scalars $\eta_j, \nu_j$ are constant, denoted by $\eta, \nu$ respectively. We also make the same assumption as for the Chebyshev iteration
that the eigenvalues of $A$ are real and located in the interval
$[ \alpha, \beta ]$ with $\alpha > 0$.
Equation \nref{eq:mmt1} becomes
\begin{align} 
x_{j+1} &= x_j + \eta  (x_j-x_{j-1}) -\nu (A x_j - b) \nonumber  \\
        &= [(1 + \eta) I - \nu A] x_j  - \eta x_{j-1}  + \nu b  \label{eq:mmt2}
\end{align} 
To analyze the convergence of the above iteration we can write it in the form
\eq{eq:mmt3}
\begin{pmatrix} x_{j+1} \\ x_j \end{pmatrix} =
%%\begin{pmatrix}  [(1 + \eta) I - \nu A] & -\eta I \\ 1 & 0  \end{pmatrix}
\left( \begin{array}{c|c} 
         (1 + \eta) I - \nu A  & -\eta I \\ \hline
         I & 0  \end{array}  \right)        
\begin{pmatrix}  x_j  \\ x_{j- 1} \end{pmatrix} +
\begin{pmatrix} \nu b  \\ 0  \end{pmatrix}  . 
\en
It is helpful to  introduce two matrices: 
\eq{eq:2mats}
B = \frac{1}{\delta} (A - \theta I) \quad \mbox{and} \quad 
C = \frac{1}{2} \left[(1 + \eta) I - \nu A \right]
\en
where we recall that $\delta, \theta$ are defined in \nref{eq:scalars}
and  that the eigenvalues $\lambda_i(B)$ are in the interval $[-1, 1]$.
Then the iteration matrix in \nref{eq:mmt3} is
\eq{eq:MatG}
G =
\begin{pmatrix}  2 C & -\eta I   \\ I & 0 \end{pmatrix} .
\en 
If  $\mu_j$ $j=1,\cdots,n$, are the eigenvalues of $C$, then those of $G$ are
\eq{eq:lamG}
\lambda_j = \mu_j \pm \sqrt{\mu_j^2 - \eta} . 
\en
This expression can help determine if the scheme \nref{eq:mmt1} will converge.
As a particular case, 
a sufficient condition for convergence is that $ 0< \eta < 1$ and all $\mu_j$'s be less that
$\sqrt{\eta} $ in magnitude. In this case $\mu_j^2 -\eta $ is negative and the modulus of
$\lambda_j$ is a constant equal to $\sqrt{\eta}$, which is independent of $j$.

%%====================TBC .. back to Chebyshev -- etc. 

As an example, we will look at   what happens in the case of
the stationary Chebyshev iteration defined earlier.

\begin{proposition}
  Consider  the stationary Chebyshev iteration
  in which $\eta_j = \eta \equiv \rho^2$ and
  $\nu_j = \nu = 2 (\rho/\delta)$, where $\rho $ was 
 defined in \nref{eq:limrho}.  
  Then each of the eigenvalues $\mu_j$ of
  the matrix $C$ in \nref{eq:2mats}   satisfies the inequality
  $|\mu_j | \le \rho $.
  In addition, if $\rho <1 $ the stationary Chebyshev iteration converges
  and its convergence factor, i.e., 
  the spectral radius of the matrix $G$ in \nref{eq:MatG}, is equal to  $\rho$.
\end{proposition}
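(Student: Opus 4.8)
The plan is to compute the eigenvalues of $C$ explicitly and then feed them into the formula \nref{eq:lamG} for the eigenvalues of $G$. First I would express an eigenvalue $\mu_j$ of $C = \frac{1}{2}[(1+\eta)I - \nu A]$ in terms of the corresponding eigenvalue $\lambda$ of $A$; since the eigenvalues of $A$ are real, each $\mu_j$ is real and equals $\frac{1}{2}[(1+\rho^2) - \nu\lambda]$. Writing $\lambda = \theta + \delta t$ with $t = \lambda_i(B) \in [-1,1]$ (recall from \nref{eq:2mats} that $B = \frac{1}{\delta}(A - \theta I)$ has spectrum in $[-1,1]$) and inserting $\nu = 2\rho/\delta$ gives $\mu_j = \frac{1+\rho^2}{2} - \frac{\rho\theta}{\delta} - \rho t$.

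The crux is a cancellation that rests on an identity for $\rho$. From \nref{eq:limrho}, $\rho = \frac{\theta}{\delta} - \sqrt{(\theta/\delta)^2 - 1}$ is a root of $\rho + \rho^{-1} = 2\theta/\delta$, equivalently $\frac{\theta}{\delta} = \frac{1}{2}(\rho + \rho^{-1})$; multiplying by $\rho$ yields $\frac{\rho\theta}{\delta} = \frac{1+\rho^2}{2}$. The first two terms in the expression for $\mu_j$ therefore cancel, leaving the remarkably clean relation $\mu_j = -\rho\, \lambda_i(B)$. Because $|\lambda_i(B)| \le 1$, this gives $|\mu_j| \le \rho$ at once, proving the first assertion. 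I expect this identity, and the cancellation it produces, to be the only genuinely nontrivial step; the rest is bookkeeping.

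For the spectral radius I would substitute into \nref{eq:lamG}. With $\eta = \rho^2$ and $|\mu_j| \le \rho$ we have $\mu_j^2 - \eta \le 0$, so the two eigenvalues of $G$ attached to $\mu_j$ form the conjugate pair $\mu_j \pm i\sqrt{\rho^2 - \mu_j^2}$, of modulus $\sqrt{\mu_j^2 + (\rho^2 - \mu_j^2)} = \rho$. Hence all $2n$ eigenvalues of $G$ have modulus exactly $\rho$, so the spectral radius of $G$ equals $\rho$. Finally, if $\rho < 1$ the spectral radius is below one, which forces $G^k \to 0$ and hence convergence of the iteration \nref{eq:mmt3}; this conclusion survives the boundary case $\mu_j = \pm\rho$ (where the two roots coincide and $G$ may be defective), since powers of a Jordan block whose eigenvalue has modulus $\rho < 1$ still tend to zero. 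One may note in passing that the standing assumption $\alpha > 0$ already forces $\theta/\delta > 1$, and hence $\rho < 1$ automatically.
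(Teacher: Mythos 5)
Your proposal is correct and follows essentially the same route as the paper: both reduce $\mu_j$ to $-\rho\,\lambda_j(B)$ via the identity $\rho^2 - 2\frac{\theta}{\delta}\rho + 1 = 0$ (your form $\frac{\rho\theta}{\delta} = \frac{1+\rho^2}{2}$ is the same relation), and then read off that the eigenvalues of $G$ form conjugate pairs of modulus exactly $\rho$. Your added remarks — handling a possibly defective $G$ at $\mu_j = \pm\rho$ and noting that $\alpha>0$ forces $\rho<1$ — are sound refinements the paper leaves implicit, but the core argument is identical.
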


\begin{proof}
  The eigenvalues $\mu_j $ of $C$ are related to those of $B$ as follows
  \eq{eq:rellam}
  \mu_j = \frac{1}{2}
  \left[ 1+\eta - \nu \lambda_j(A)\right]
  =\frac{1}{2} \left[ 1+\eta - \nu (\theta + \delta \lambda_j(B))\right] .
  \en
  Substituting  $\eta = \rho^2$ and   $\nu = 2 \rho/\delta$, leads to: 
  \eq{eq:rellam1}
  \mu_j 
  =\frac{1}{2} \left[ 1+\rho^2 - \frac{2 \rho}{\delta}
    (\theta + \delta \lambda_j(B))\right]
  = \frac{1}{2} \left[ \rho^2 - 2 \frac{\theta}{\delta} \rho +1 \right]
   - \rho \lambda_j(B)  .
   \en
   It can be observed that $\rho$ in \nref{eq:limrho} is a root of the
   quadratic term in the  brackets in the second part, i.e.,
   $ \rho^2 - 2 \frac{\theta}{\delta} \rho +1 = 0$, and so
   $ \mu_j = - \rho \lambda_j(B) $.  Since  
   $|\lambda_j(B)| \le 1$, it is clear that
   $| \mu_j | \le \rho$ and therefore the term
   $\mu_j^2 - \eta = \mu_j^2 - \rho^2$ in    \nref{eq:lamG} is non-positive.
   Hence,  the    eigenvalues $\lambda_j $ are of the form 
   $\lambda_j = \mu_j \pm  i \sqrt{\rho^2 - \mu_j^2}$ and they all have the
   same modulus $\rho$. 
\end{proof}

It may seem counter-intuitive that a simple fixed-point
iteration like \nref{eq:mmt3} can be competitive with more
advanced schemes but we note that we have doubled the dimension of the problem
relative to a simple first order scheme.
The strategy of moving a  problem into a higher dimension
to achieve better convergence is common.

\iffalse
===================================================

REMOVE or REDO

===================================================
\fi

\subsubsection{Nesterov acceleration} \label{sec:nesterov} 
A slight variation of the  momentum scheme discussed above is 
\emph{Nesterov}'s iteration \cite{nesterov}. In this approach
%%can be  viewed as a form of gradient withm omentum technique. The only difference is that
the gradient is evaluated at an intermediate point instead of the most recent iterate:
\iffalse 
\[ 
\left\{\begin{array}{rll} 
          \delta_j & := &  \eta_{j} \Delta x_{j-1} - \nu_{j}
                          \nabla \phi(x_{j} + \eta_j \Delta x_{j-1}) \\
x_{j+1}        & := & x_{j} + \delta_{j}
\end{array}
\right.
\]
\fi 

\eq{eq:nesterov}
x_{j+1} = x_j + \eta_j \Delta x_{j-1} - \nu_j
\nabla \phi (x_j + \eta_j \Delta x_{j-1} ) .
\en
Using earlier notation where $v_{j-1} = - \Delta x_{j-1}$  this can also be re-written as:
\eq{eq:nestV}
 \left\{ \begin{array}{lcl}
  v_j &=& \eta_j v_{j-1} + \nu_j \nabla \phi(x_j-\eta_j v_{j-1}) \\
  x_{j+1} &=& x_j - v_j
\end{array} \right.
  \en

To analyze convergence in the linear case, we need to rewrite \nref{eq:nesterov} in a block form similar to 
\nref{eq:mmt3}. Setting $\eta_j = \eta, \nu_j = \nu$ and $\nabla \phi(x) = Ax-b$ in 
\nref{eq:nesterov} yields:
\begin{align*} %% {eq:mmt4}
  x_{j+1} & = x_j + \eta (x_j - x_{j-1}) - \nu \left[A (x_j + \eta(x_j - x_{j-1}) ) - b \right] \\
        %%  & = (1+\eta) x_j -\eta x_{j-1} - \left[ \nu (1+\eta) A x_j - \nu\eta A x_{j-1}
        %%    - \nu b \right] \\
          & = (1+\eta) x_j -\eta x_{j-1} - \nu (1+\eta) A x_j + \nu\eta A x_{j-1} + \nu b \\
          & = (1+\eta) (I - \nu A) x_j - \eta [I - \nu A] x_{j-1}     + \nu b 
\end{align*}
which leads to:
\eq{eq:mmt5}
\begin{pmatrix} x_{j+1} \\ x_j \end{pmatrix} =
%%\begin{pmatrix}  [(1 + \eta) (I-\nu A) & -\eta (I-\nu A) \\ 1 & 0  \end{pmatrix}
\left( \begin{array}{c|c} 
         (1 + \eta) (I - \nu A)  & -\eta (I - \nu A)  \\ \hline
         I & 0  \end{array}  \right)        
\begin{pmatrix}  x_j  \\ x_{j- 1} \end{pmatrix} +
\begin{pmatrix} \nu b  \\ 0  \end{pmatrix}
\en

The scheme represented by \nref{eq:nesterov} and its matrix form \nref{eq:mmt5}
can be viewed as a way of accelerating a Richardson-type scheme, see,
\nref{eq:Richardson0}, with the parameters $\gamma_j$ set equal to $ \nu$ for
all $j$.  The corresponding iteration matrix \nref{eq:resPol1} is the matrix
$B\equiv I - \nu A$.  If $\mu_j, j=1,\cdots,n$ are the eigenvalues of $B$, those
of the iteration matrix in \nref{eq:mmt5} are roots of the equation
$\lambda^2 - (1+\eta)\mu_j \lambda + \eta \mu_j = 0$ and they can be written as
follows:

\eq{eq:eigNest}
\lambda_j^{\pm}  
  = \mu_j \left[ \frac{1+\eta}{2} \pm \sqrt{\left(\frac{1+\eta}{2}\right)^2 -\frac{\eta}{\mu_j}}\right]
\en
with the convention that when $\mu_j = 0$ both roots are zero. Assume that the eigenvalues
of $B$ are real and such that 
\eq{eq:intv}
- \theta_1 \le \mu_j \le \theta_2 
\en
where $\theta_1, \theta_2$ are non-negative.
It is  convenient to define
\eq{eq:thetastar}
\theta_* =  \frac{\eta}{\half (1+\eta)^2} .
\en

The simplest scenario to analyze is when $\nu$ and $\eta$ are selected such that  $\theta_1=0$
and $\theta_2 = \theta_*$. For example, we can first set $\nu  = 1/\lambda_{max}(A)$ to satisfy
the requirement $\theta_1 =0$ since in this case
\[\mu_i = 1-\nu \lambda_i(A) \ge 1 - \frac{\lambda_{max}}{\lambda_{max}}  = 0 .
  \] 
  Then with $\nu$ set to the value just selected, we will find   $\eta $ so that $\theta_* = \theta_2$,
  i.e., $ \frac{\eta}{(\half (1+\eta))^2} = \theta_2$, which yields  the quadratic equation:
   \eq{eq:quad}
    %% \frac{\eta}{(\half (1+\eta))^2} = \theta_2 \to
    %% \frac{4 \eta}{\theta_2} = 1 + 2\eta + \eta^2 \to
     \eta^2 -2 \left(\frac{2 }{\theta_2} -1\right) \eta +1 = 0 . 
   \en
   Note that if the eigenvalues of $A$ are    positive, then $\theta_2 \le 1$ and
   the discriminant $\Delta = \left(\frac{2 }{\theta_2} -1\right)^2 -1$ is non-negative so the roots
   are real. 
   It is important to also note that the product of the two roots of this equation is one,
   and we will select $\eta$ to be  the smallest of the two roots so we know it will not exceed one.

   In this scenario, the eigenvalues $\mu_j$ are in the interval $[0, \ \theta_* ]$ and will
yield a negative value inside the square root in formula \nref{eq:eigNest} for $\lambda_j^\pm$.
The  squared modulus of $\lambda_j^\pm$ is 
\eq{eq:eig1N}
\mu_j^2 \left[ \left(\frac{1+\eta}{2}\right)^2 + \frac{\eta}{\mu_j} -
  \left(\frac{1+\eta}{2}\right)^2 \right] = \eta \mu_j .
\en
So each of these eigenvalues is transformed into a complex conjugate pair of
eigenvalues with modulus $\sqrt{\eta \mu_j}$,  but since $\mu_j \le \theta_*$
the maximum modulus is $\le  2  \eta/(1+\eta)$ which is less than one
when $\mu$ is  selected to be the smallest root of \nref{eq:quad} as discussed above.
In this situation  the scheme will converge with a convergence
factor $ 2 \eta/(1+\eta)$.

It is also possible to analyze convergence in a more general scenario when
\[
  -\theta_1 \le 0 \le \theta_* \le \theta_2
\]
In this situation we would need to distinguish between 3 different cases corresponding to the
intervals $[\theta_1, \ 0) $, $[0,  \ \theta_*)$, and $ [\theta_*, \ \theta_2]$.
The analysis is more complex and is omitted.

\section{Accelerating the SCF iteration}\label{sec:dft} 
%%%: The Kohn Sham equation}
The Schr\"odinger's equation $ H \Psi = E \Psi $ was the result of a few decades
of exceptionally productive research starting in the late 19th century that
revolutionized our understanding of matter at the nanoscale, see
\cite{Gamov-book}.  In this equation $E$ is the energy of the system and
$\Psi = \Psi(r_1, r_2, \ldots, r_n, R_1, R_2, \ldots, R_N) $ the wavefunction
which depends on the positions $R_i$, $r_i$ of the nuclei and electrons
respectively.  This dependence makes the equation intractable except for the
smallest cases.
%%Involves sums over all electrons / nuclei and their  pairs 
%%in terms involving Laplaceans, distances betweens electrons /nuclei.
Thus, in 1929, Dirac famously stated:
\emph{The underlying physical laws necessary for the mathematical theory of a large part of
physics and the whole of chemistry are thus completely known, and the difficulty is only
that the exact application of these laws leads to equations much too complicated to be
soluble. It therefore becomes desirable that approximate practical methods of applying
quantum mechanics should be developed.
}
It took about four additional decades to develop such
methods, a good representative of which is \emph{Density Functional Theory}
(DFT).

\subsection{The Kohn-Sham equations and the SCF iteration}
DFT manages to obtain a tractable version of the original equation by
replacing the many-particle wavefunction $\Psi$ with one that depends
on one fictitious particle which will generate the same charge density as
that of the original interacting multi-particle interacting system.
The foundation of the method is  a fundamental result by \cite{KS65}, namely
 that observable quantities are  uniquely determined by the ground state charge density.
The resulting Kohn-Sham equation can be written as follows:
\eq{eq:KS}
\left[ -  \frac{h^2}{2 m} \nabla^2 + V_{tot} [\rho(r)] 
\right] \Psi(r) = E \Psi(r)
\en
where the total potential $V_{tot}$ is the sum of three terms:
\eq{eq:Ppts}  V_{tot} = V_{ion} + V_H + V_{xc} . 
\en
Both the Hartree potential $V_{H}$ and the Exchange-Correlation
$V_{xc}$ depend on the charge density (or electron density)
which is defined from `occupied' wavefunctions:
\eq{eq:rho}
\rho (r) = \sum_i^{occup} | \Psi_i (r) |^2 . 
\en
Thus, the Hartree Potential $V_H $ is solution of the Poisson equation:
$\nabla^2 V_H = - 4 \pi \rho(r)$ on the domain of interest while
the Exchange-Correlation potential depends nonlinearly on $\rho$
under various expressions that depend on the model.
Once discretized, Equation \eqref{eq:KS} yields a large eigenvalue problem,
typically involving $n_{occup}$  eigenvalues and associated eigenvectors.
For additional details, see \cite{SaChSh-Nano}.

The basic SCF iteration would start from a guessed value of the charge density
and other variables and obtain a total potential from it. Then the eigenvalue
problem is solved to give  a new set of wavefunctions $\psi_i$ from which
a new charge density $\rho$ is computed and the cycle is repeated until
self- consistence is reached, i.e., until the input $\rho_{in}$ and output $\rho_{out}$
are close enough.
One can capture the process of computing $\rho$ in this way by a fixed-point
iteration:
\eq{eq:fixp}
\rho_{j+1} = g(\rho_j)
\en
where $g$ is a rather complex mapping that computes a new charge density
from the old one. This mapping involves solving an eigenvalue problem,
a Poisson equation, and making some other updates.

Convergence of this process can be slow in some specific situations. In addition,
the cost of each iteration can be enormous. For example, one could have a
discretized eigenvalue problem with a size in the tens of millions, and a number of
occupied states, i.e., eigenpairs to compute,  in the tens of thousands.
It is therefore natural  to think of employing procedures that
accelerate the process and this can be done in a number of ways.

\subsection{Simple mixing}\label{sec:simpleMix}
The idea of acceleration used in the context of DFT is different from that of
extrapolation discussed earlier. Here, researchers invoke the idea
of `mixing'  a current $\rho$ with older ones. The most basic  of these is 
known as `simple mixing' and it defines a new $\rho$ from an old one as follows,
where $\gamma$ is a scalar parameter:

\begin{align} 
  \rho_{j+1} & = \gamma g(\rho_j) + (1-\gamma) \rho_j  \label{eq:mixing0}   \\
               & = \rho_j  + \gamma( g( \rho_j) -  \rho_j)  \label {eq:mixing1}  
\end{align}

It may be helpful to link the above acceleration scheme with other
methods that have been developed in different contexts and to this end
a proper notation is key to unraveling these links.
We will replace $\rho$ by a more general variable $x$, so \nref{eq:fixp} becomes
\eq{eq:fixp1}
x_{j+1} = g(x_j)
\en
and we will denote by $f$ the function:
\eq{eq:fxgx} f(x) =
g(x) - x .
\en
With this,  the `simple mixing' iteration \eqref{eq:mixing1} becomes:
\eq{eq:mixing2} x_{j+1} = x_j + \gamma f(x_j) .  \en
The first link that will help set the notation and terminology is with Richarson's iteration,
see Equation \nref{eq:Richardson0}, for solving linear systems of the form $Ax=b$.
In this case 
$g(x) = x + \gamma (b-Ax)$ and $f(x) = b-Ax$.  So, Richardson's iteration
amounts to the simple mixing scheme above where $f(x)$ \emph{is the residual
  $b-Ax$.}  We will refer to the general scheme defined by \eqref{eq:mixing2} as
Richardson's iteration, or `linear mixing',  in order to avoid the term
`simple mixing' which is proper to the physics literature.
Often, linear iterations are expressed in the form 
$x_{j+1} = M x_j + b $. In this case we seek to solve the system $(I-M) x = b$ and 
$f(x) = (I-M)x - b$, is  the negative of the residual.

The next link is with gradient descent algorithms for minimizing a scalar function
$\phi(x)$. Assuming that the function is convex and differentiable, the iteration reads:
\eq{eq:GD}
x_{j+1}  = x_j - \gamma \grad \phi(x_j)
\en
where $\gamma$ is a positive scalar  selected to ensure a decrease of the function $\phi(x)$.
In the special case where $\phi(x) = \half x^T A x - b^T x $ and $A$ is symmetric then
$\grad \phi(x) = Ax-b$ which is the negative of the residual,
i.e.,  $\nabla \phi (x) = - (b - Ax)$. This leads to the gradient method for minimizing
$\phi(x)$, and thereby solving the system $Ax=b$, when $A$ is SPD.

\subsection{Anderson mixing}\label{sec:AA1}
%% In a 1965 article \cite{Anderson65}, Donald Anderson presented what he called an
The  article \cite{Anderson65} presented what the author called an
`extrapolation algorithm' for accelerating sequences produced by fixed-point
iterations. The method became well-known in the physics literature as ``Anderson
mixing'' and later as ``Anderson Acceleration'' (AA) among numerical analysts.  The
following description of the algorithm introduces minimal changes to the
notation adopted in the original paper.  Anderson's scheme aimed at generalizing
the simple mixing discussed earlier.  It also starts with a mapping $g$ that
takes an input $x$ into some output $y$, so that $y = g(x)$.  The pair $x,y$ is
`self-consistent' when the output and input values are the same, i.e., when
$\|y-x\|_2 = 0$, or small enough in practice.  In the following we define $f$ to be
the residual $f(x) = g(x) - x$.  First, Anderson considered a pair consisting
of an intermediate iterate $\bar x_j$  and an associated `linear residual' $\bar f_j$:

\begin{align} 
  \bar x_j &= x_j + \sum_{i=[j-m]}^{j-1}  \theta_i (x_{i}-x_j) ,
                 \label{eq:xbar} \\
  \bar f_j &= f_j + \sum_{i=[j-m]}^{j-1}  \theta_i (f_{i}-f_j) .
                 \label{eq:fbar}
\end{align}
Since  $\bar f _j$ can be viewed as a linearized residual for $\bar x_j$,
the idea is to determine the set of $\theta_i$'s so as to minimize the
2-norm of  $\bar f_j$:
\eq{eq:theta} \min_{ \{
  \theta_i \} } \left\| f_j + \sum_{i=[j-m]}^{j-1} \theta_i
  (f_{i}-f_j)\right\|_2 .  \en Once the optimal
$\theta = [\theta_{[j-m]}, \ \cdots \ , \theta_{j-1}]^T $ is found, the vectors
$\bar x_j, \bar f_j$ are computed according to (\ref{eq:xbar} -- \ref{eq:fbar})
and the next iterate is then defined as:
\eq{eq:and3} x_{j+1} = \bar x_j +
\beta \bar f_j . 
\en
In the original article, $\beta$ was dependent on $j$, but we removed
this dependence for simplicity, as this also reflects common practice.

When comparing the Anderson scheme to Pulay mixing discussed in the next section,
it becomes useful to  rewrite  equations
(\ref{eq:xbar} -- \ref{eq:fbar}) by 
defining $\theta_j = 1 - \sum_{i=[j-m]}^{j-1} \theta_i $. This leads to
the mathematically equivalent equations:

\begin{align} 
    \bar x_j &=\sum_{i=[j-m]}^{j}  \theta_i x_{i}
                   \label{eq:xbarP} \\
    \bar f_j &= \sum_{i=[j-m]}^{j}  \theta_i f_{i}
                   \label{eq:fbarP} \\
    \{ \theta_i \}  &= \text{argmin} \left\{   
                      \left\| \sum_{i=[j-m]}^j \theta_i  f_i\right\|_2 
                     \quad \text{subject to:} \ \sum_{i=[j-m]}^j \theta_i = 1
                      \right\}  \label{eq:thetaP} 
\end{align}

Formulation (\ref{eq:xbar} -- \ref{eq:theta}) leads to a standard (unconstrained) least-squares problem to solve,
see \nref{eq:theta}. It can be viewed as   a straightforward alternative
to  the formulation  (\ref{eq:xbarP} -- \ref{eq:thetaP}) which requires solving a 
constrained optimization problem.
The first formulation is still not an efficient one from the implementation point of view
-- mainly because the sets of vectors $ f_i - f_j$  and $ x_i - x_j$ for
$i=[j-m],\cdots, j-1]$ must be computed at each step.
An equivalent algorithm that avoids this will be seen in Section~\ref{sec:AA2}.

\subsection{DIIS (Pulay mixing)}\label{sec:Pulay}
The ``Direct Inversion in the Iterative Subspace'' (DIIS) is a method introduced by
 \cite{Pulay80} to address the same acceleration needs as Anderson's method.
It is well-known as `Pulay mixing' and widely used  in computational chemistry.
The method defines the new iterate in the following form:
\eq{eq:pulay}
 x_{j+1} = \sum_{i=[j-m]}^j \theta_i g(x_{i}) ,
\en
where the $\theta_i$'s are to be determined parameters such that
 $\sum_{i=[j-m]}^j \theta_i = 1$.
%%and $[j-m]$ is the same as before.
Defining the residual for iteration $i$ as $f_i = g(x_i) - x_i $,
the $\theta_i$'s are selected to minimize the norm of the linearized residual,
i.e., they result from solving the following optimization problem
\eq{eq:pul1} 
 \min \left\|   \sum_{i=[j-m]}^j \theta_i f_i \right\|_2  \quad
\text{subject to:} \quad  \sum_{i=[j-m]}^j \theta_i = 1   \ .
  \en
  In the original paper, the above  problem was solved by using standard techniques
  based on Lagrange multipliers. However, an alternative approach is to invoke the 
  constraint and express one of the $\theta_i$'s in term of the others.
  Specifically, we can   define
  \eq{eq:thetal}
  \theta_j = 1 -   \sum_{i=[j-m]}^{j-1} \theta_i . 
  \en
  When this is substituted in the optimization problem~\nref{eq:pul1} we obtain
  the same optimization problem \nref{eq:theta} as for AA. 
  In addition, assume that DIIS is applied to an iteration of the form \nref{eq:mixing2}
  with $\gamma $ replaced by $\beta$. In this case $g(x_i) = x_i + \beta  f(x_i)$ and therefore
  the next iterate defined in \nref{eq:pulay} can be re-written as
  \eq{eq:pulay1}
  x_{j+1} = \sum_{i=1}^j \theta_i ( x_i + \beta  f(x_i))  =
  \sum_{i=[j-m]}^j \theta_i x_i + \beta  \sum_{i=[j-m]}^j \theta_i f(x_i))
  \equiv \bar x_j +  \beta  \bar f_j ,
  \en
  which is identical with Anderson's update in Eq. \nref{eq:and3}.
  Note also that the vectors $f_i$ defined above are now $x_i -g(x_i) = \beta f_i$ and so the
  solution to the problem \nref{eq:pul1} is unchanged.
  So the two schemes
  are equivalent when both are used to accelerate an iteration of the Richardson type 
  \nref{eq:mixing2}. For this reason, the common term ``Anderson-Pulay mixing'' is often
  used to refer to either method.

  Later Pulay published another paper  \cite{Pulay82}  which considered
  improvements to the original scheme discussed above. This improved scheme
  consisted mainly in introducing a new  SCF iteration, i.e., an alternative to
  the function $g$ in our notation, which leads to better convergence. In other
  words the method    is specifically targeted at   SCF iterations.
  %%%%-- see in particualar \cite{a5}
  Pulay seemed unaware of
  Anderson's work which preceded his by approximately 15 years. Indeed,  neither
  of the two   articles just mentioned cites Anderson's method.

  The article \cite{chupin2021convergence} discusses the convergence of variable
  depth DIIS algorithms and shows that these can lead to superior convergence
  and computationally more effective schemes.
  
\section{Inexact and Quasi Newton approaches}\label{sec:QN} 
Among other technique that  have  been successfully applied to accelerate fixed-point
iterations such as the ones in DFT, are those based on  Quasi-Newton (QN) approaches.
One might argue whether or not it is legitimate to view these as `acceleration techniques'.
If the only restriction we put on acceleration
methods is that they utilize a few of the previous iterates of the sequence, and
that they apply the fixed-point mapping $g$, to one of more vectors, then  inexact
Newton and Quasi-Newton methods satisfy these requirements.

\subsection{Inexact Newton} \label{sec:InexNewt} Many of the approaches
developed for solving \eqref{eq:fx} are derived as approximations to Newton's
approach which is based on the local linear model around some current
approximation $x_j$:
\begin{equation}
\label{eq:linmod}
f(x_j+\Delta x)\approx f(x_j) + J(x_j)\Delta x ,
\end{equation}
where $J(x_j)$ is the Jacobian matrix at $x_j$. Newton's method
determines $\delta = \Delta x_j = x_{j+1} - x_j$ at step $j$, to make the right-hand side
on \eqref{eq:linmod} equal to zero. This is achieved by solving the Newton
linear system $ J(x_j) \delta + f(x_j) = 0.$ Inexact Newton methods, see e.g.,
\cite{Kelley-book,Dembo-al,Brown-Saad} among many references, compute a sequence
of iterates in which the above Newton systems are solved approximately,
typically by an iterative method.  Starting from an initial guess $x_0$, the
iteration proceeds as follows:
\begin{align} 
 &      \mbox{Solve}  & J(x_j) \delta_j   &\approx -f(x_j) &  \label {eq:inex1}\\
 &       \mbox{Set}    & x_{j+1}  & = x_j + \delta_j & \label {eq:inex2}
\end{align}  
The right-hand side of  the Newton  system is $-f(x_j)$ and this is also
the residual for the linear system when $\delta_j = 0$. 
Therefore, in later sections we will define the residual vector  $r_j \equiv -f(x_j)$.

Suppose that we invoke a Krylov subspace method for solving~\nref{eq:inex1}. If we set $J \equiv J(x_j)$ then the method 
will usually generate an approximate solution that can be written in the form
\eq{eq:deltaj}
\delta_j = V_j y_j,
\en
where $V_j$ is an orthonormal basis of the Krylov subspace
\eq{eq:Kry}
\K_j = \Span \{r_j, J r_j, \cdots, J^{m-1} r_j \}.
\en
The vector $y_j$ represents the expression of
the solution in the basis $V_j$.  
For example, if GMRES or, equivalently Generalized Conjugate Residual (GCR)
\cite{Eis-Elm-Sch}, is used, then $y_j$ becomes
$y_j = (J V_j)^\dagger (-f(x_j))$. In essence, the inverse Jacobian is locally
approximated by the rank $m$ matrix:
\eq{eq:ApInv}
B_{j, GMRES} = V_j (J    V_j)^\dagger.
\en
In inexact Newton methods the approximation just defined can be termed `local',
since  it is only used for the $j$-th step: once the solution is
updated, the approximate inverse Jacobian \nref{eq:ApInv} is discarded and
the process will essentially compute a new Krylov subspace and related
approximate Jacobian at the next iteration. This `lack of memory' can be an
impediment to performance. Herein lies an important distinction between these methods
and Quasi-Newton methods whose goal is to compute 
approximations to the Jacobians, or their inverses, by an accumulative
process, which utilizes  the most recent iterates to gradually update these approximations.  
  
\subsection{Quasi-Newton} \label{sec:broyden}
Standard quasi-Newton methods build a  local approximation 
$J_{j}$ to the Jacobian $J(x_{j}) $ progressively by using previous iterates.  These methods require  the relation  \eqref{eq:linmod} to be satisfied by
the updated $J_{j+1}$ which is built at step $j$. 
First  the following {\em secant condition}, is imposed:
\begin{equation}
\label{eq:secant1}
J_{j+1}\Delta x_j = \Delta f_j,
\end{equation}
where $\Delta f_j:=f(x_{j+1})-f(x_j)$. Such a condition will force the
new approximation to be exact on the \emph{pair}
$\Delta x_j,  \Delta f_j$ in the sense that the mapping $J_{j+1}$ must transform
$\Delta x_j$ into $\Delta f_j$ exactly.
A second  common requirement is  the
{\em no-change condition}: 
\begin{equation}
\label{eq:nochange1}
J_{j+1}q =  J_{j}q, \quad \forall q \quad \mbox{such that} \quad 
q^T\Delta x_j=0 . 
\end{equation}
In other words, there should be no new information from $J_j$ to $J_{j+1}$
along any direction $q$ orthogonal to $\Delta x_j$.
Broyden showed that there is a unique  matrix $J_{j+1}$ that satisfies
both  conditions (\ref{eq:secant1}) and (\ref{eq:nochange1}) and it can be
obtained by the update formula: 
\begin{equation} 
J_{j+1} = 
J_j + (\Delta f_{j}-J_j\Delta x_j)\frac{\Delta x_j^T}{\Delta x_j^T\Delta x_j}.
\label{eq:broyden1updatej}
\end{equation}

%% in (\ref{eq:broyden1updatej}) is the unique matrix
%%By imposing the secant condition (\ref{eq:secant1}) and the no-change condition
%%(\ref{eq:nochange1}) Broyden \cite{broyden:broyden65} developed the
%%update formula,

Broyden's \emph{second method} approximates
the inverse Jacobian directly instead of the Jacobian itself.
If $G_j$  denotes this approximate inverse Jacobian
at the $j$-th iteration, then the secant condition (\ref{eq:secant1})
becomes: 
\begin{equation}
\label{eq:secant2}
G_{j+1}\Delta f_j = \Delta x_j.
\end{equation}
By minimizing $E(G_{j+1})=\|G_{j+1}-G_{j}\|_F^2$
with respect to $G_{j+1}$
subject to (\ref{eq:secant2}),
one  finds this update formula for the inverse Jacobian:
\begin{equation}
\label{eq:broyden2update}
G_{j+1}=G_{j}+(\Delta x_j - G_j 
\Delta f_j)\frac{\Delta f_j^T}{\Delta f_j^T\Delta f_j},
\end{equation}
which is also the only update satisfying both
the secant condition (\ref{eq:secant2})
and the no-change condition for the inverse Jacobian:
\eq{eq:NoCh}
(G_{j+1} -G_{j}) q = 0, \quad \forall \  q \perp \Delta f_j.
\en
AA can be viewed from the angle of \emph{multi-secant} methods, i.e., block
forms of the secant methods just discussed, in which we impose a secant
condition on a whole set of vectors $\Delta x_i, \Delta f_i$ at the same time,
see \ref{sec:AA3}.

\section{A detailed look at   Anderson Acceleration}\label{sec:AA2} 

In the original article  \cite{Anderson65} the author hinted that he was 
influenced by the literature on `extrapolation methods' of the type presented by
Richardson and Shanks and in fact he named his method the ``extrapolation
method'', although more recent terminology reserves this term to a different
class of methods. There were several
rediscoveries of the method, as well as variations in the implementations.  In
this section we will take a deeper look at AA and explore different
implementations, variants of the algorithm, as well as theoretical aspects.
  
  \subsection{Reformulating Anderson's method}\label{sec:reformul}
  We begin by making a small adjustment to the notation in order to rewrite AA
  in a form that resembles that of extrapolation methods.  This variation
  requires a simple change of basis. As was seen in Section~\ref{sec:AA1}
  Anderson used the basis vectors
  \eq{eq:di}
  d_i = f_j - f_i, \quad \text{for}\quad i=[j-m]:j-1
  \en
  to express the vector  added to $f_j$ to obtain $\bar f_j$ as
  shown in \eqref{eq:fbar} which becomes
  \eq{eq:fbar1} \bar f_i = f_j - \sum_{i=[j-m]}^{j-1} \theta_i d_i .
  \en
  We assume  that the $d_i$'s do indeed form a basis. 
  It is common in extrapolation methods to use forward differences, e.g.,
  $\Delta f_{i} = f_{i+1} - f_{i}$ and $\Delta x_{i} = x_{i+1} - x_{i}$.
  These can be exploited to form an  alternate basis consisting of the vectors
  $\Delta f_{i-1} = f_i - f_{i-1}$ for $i = [j-m]: j-1$  instead of the $d_i$'s.
%%  Again we will assume that these vectors are indeed lineary independent. 
  Note that the simple relations:
  \begin{align}
 \Delta f_{i} &= (f_{i+1} - f_{j}) - (f_{i} - f_{j}) = d_{i} - d_{i+1} , \quad i=[j-m], \cdots, j-1  \label{eq:basch1}\\
    f_j - f_i 
              &= (f_j- f_{j-1}) + (f_{j-1}-f_{j-2}) + \cdots + (f_{i+1}-f_{i})
                \nonumber \\
             &= \Delta f_{j-1} + \Delta f_{j-2} + \cdots + \Delta f_{i} ,
 \quad i=[j-m], \cdots, j-1 , \label{eq:basch2}
 \end{align} 
 allow to switch from one basis representation  to the other.
 The linear independance of one of these two sets of vectors implies the
 linear independance of the other.
 A simliar transformation can   also applied to express the vectors $x_j - x_i$ in
 terms of  $\Delta x_i$'s and vice-versa.
 
%%%%  ===================================================

  With this new notation at hand, we can rewrite AA as follows.
Starting with an initial $x_0$ and  $x_1 \equiv g(x_0)=x_0+\beta f_0$, where
$\beta >0$ is a parameter, we define blocks of  forward differences
\begin{equation}
\label{eq:dfdx}
\XX_j=[\Delta x_{[j-m]}\;\cdots\;\Delta x_{j-1}] , 
\qquad
\FF_j=[\Delta f_{[j-m]}\;\cdots\;\Delta f_{j-1}] .
\end{equation}
We will define  $m_j=\min\{m,j\}$ which is the number of columns in $\XX_j $ and $\FF_j$.
The least-squares problem \nref{eq:theta} is translated to the new problem:
\eq{eq:gammak}
\gamma\up{j} = \text{argmin}_{\gamma \in \mathbb R^{m_j}}\| f_j - \FF_j \gamma \|_2 .  
\en
from which we get the vectors in Equations~\nref{eq:xbar}, \nref{eq:fbar}, and \nref{eq:and3}:
\begin{align} 
\bar x_j & = x_j-\XX_j \ \gamma\up{j} ,   \label{eq:AA1} \\ 
\bar f_j & = f_j-\FF_j \ \gamma\up{j} , \label{eq:AA1b} \\
x_{j+1} & =\bar x_j + \beta \bar f_j . \label{eq:AA1c} 
\end{align}
We show this as Algorithm~\ref{alg:AAm}.
\begin{algorithm}[H]
  \centering
  \caption{Anderson-Acceleration AA (m)}\label{alg:AAm}
  \begin{algorithmic}[1]
  \State \textbf{Input}: Function $f(x)$, initial  $x_0$. \\
  Set $f_0 \equiv f(x_0)$; \\
  $x_1=x_0+\beta_0f_0$;\\
  $f_1 \equiv f(x_1)$.
\For{$j=1,2,\cdots,$ until convergence} 
\State $\Delta x_{j-1}  = x_j-x_{j-1} \quad \Delta f_{j-1}  = f_j - f_{j-1}$
\State Set $\XX_j = [\Delta x_{[j-m]}, \cdots, \Delta x_{j-1}] ,\qquad  \FF_j = [\Delta f_{[j-m]}, \cdots, \Delta f_{j-1}] $
\State Compute $\gamma\up{j} = \text{argmin}_\gamma \| f_j - \FF_j \gamma \|_2 $
\State Compute $x_{j+1} = (x_j - \XX_j \gamma\up{j} ) + \beta   (f_j - \FF_j \gamma\up{j} ) $ 
\EndFor
\end{algorithmic}
\end{algorithm}

It is clear that the two methods are mathematically equivalent, as they both
express the same underlying problem in two different bases. To better see the
correspondance between the two in the simplest case when $m = \infty$ (in which
case $[j-m] = 0 $) we can expand $f_j - \FF_j \gamma $ and exploit
\nref{eq:basch1}:
\begin{align*} 
  f_j - \FF_j \gamma
  &=  f_j - \sum_{i=0}^{j-1} \gamma_{i+1} (f_{i+1} -f_{i})  \\
  &=  f_j - \sum_{i=0}^{j-1} \gamma_{i+1} [d_i - d_{i+1}]  \qquad (\text{Note:} \ d_j \equiv 0)       \\
  &=  f_j - \sum_{i=0}^{j-1}  (\gamma_{i+1} - \gamma_{i} ) d_{i}   \qquad (\text{with:} \ \gamma_0 \equiv 0 )  .  
\end{align*}
Thus, a comparison with \nref{eq:fbar1} shows that
the optimal $\theta_i$'s in the original Anderson algorithm can be
obtained from the $\gamma_i$'s of the variant just presented by using the relation
$\theta_i = \gamma_{i+1}-\gamma_{i}$ for $i=0, \cdots, j-1$ with the convention
that $\gamma_0 \equiv 0$.
It is also easy to go in the opposite direction and express the $\gamma_i$'s from
the $\theta_i$'s of the original algorithm. In fact, a simple induction argument using
the relations
$\theta_i = \gamma_{i+1}-\gamma_i$ for $i \ge 0$ with $\gamma_0 \equiv 0$, will show that
$\gamma_{i+1} = \sum_{j=0}^i \theta_j $ for $i \ge 0$.

The intermediate solution $\bar x_j$ in \nref{eq:AA1} can be interpreted as
the minimizer of the linear model:
\eq{eq:AALinMod}
f(x_j - \XX_j \gamma) \approx  f(x_j) - \FF_j \gamma
\en
where we use the approximation
$f(x_j - \gamma_i \Delta x_i) \approx f(x_j) - \gamma_i \Delta f_i$.
The method computes the minimizer of the linear model \nref{eq:AALinMod} and the
corresponding optimal $\bar x_j$. The vector $\bar f_j$ is the corresponding
linear residual. Anderson's method obtains this intermediate solution
$\bar x_j$ and proceeds to perform a fixed-point iteration using the
linear model represented by the pair
$\bar x_j, \bar f_j$ as shown in \nref{eq:AA1c}.

From an implementation point of view, to compute the new iterate $x_{j+1} $ as
defined in \nref{eq:AA} we need the pair $x_j, f_j$, the pair of arrays
$\XX_j, \FF_j$, and $\beta$.  We may write this as
$ x_{j+1} = \textbf{AA} (x_j, \beta,\XX_j, \ \FF_j ).  $ The algorithm
would first obtain $\gamma\up{j}$ from solving \nref{eq:gammak}, then compute
$\bar x_j, \bar f_j $ and finally obtain the next iterate $x_{j+1}$ from
\nref{eq:AA1c}.  In the restarted version of the algorithm $\XX_j, \FF_j$ are
essentially reset to empty arrays every, say, $k$ iterations.
The case where all previous vectors are used, called the `full-window AA' (or just `full AA'),
corresponds to setting $m=\infty$ in Algorithm~\ref{alg:AAm}. 
The `finite window AA' or `truncated AA' corresponds
Algorithm~\ref{alg:AAm} where $m$ is finite.
The parameter $m$ is often termed  called
   the `window-size' or `depth' of the algorithm .  

%%===================================================
\subsection{Classical Implementations}\label{sec:NEq} 
In the classical implementation of AA the least-squares problem \nref{eq:gammak}
or \nref{eq:theta} are solved via the normal equations:
%%. For example,
%%$\gamma\up{j}$ in \nref{eq:gammak} can be obtained from solving the system:
\eq{eq:NEq} (\FF_j^T \FF_j) \gamma\up{j} = \FF_j^T f_j \en
where it is assumed
that $\FF_j$ has full rank.  When the iterates near convergence, the column
vectors of $\FF_j$ will tend to be close to zero or they may just become
linerarly dependent to within the available working accuracy.  Solving the
normal equations \nref{eq:NEq} in these situations will fail or be subject to
severe numerical issues.  In spite of this, the normal equation approach is
rather common, especially when the window-size $m$ is small.  The ideal solution
to the problem from a numerical point of view is to resort to the Singular Value
Decomposition (SVD) and apply the truncated SVD \cite[sec. 5.5]{GVL-book}.
However, this `gold-standard' approach is expensive and has been avoided by
practitioners.  An alternative is to regularize the least-equares problem, 
replacing \nref{eq:NEq} by \eq{eq:NEqR} (\FF_j^T \FF_j +\tau I) \gamma\up{j} =
\FF_j^T f_j \en where $\tau $ is a regularization parameter.  This compromise
works reasonably well in practice and has the advantage of being inexpensive
in terms of arithmetic and memory.
Note that the memory cost of an approach based on normal  equations is modest,
requiring  mainly to keep  the sets $\FF_j, \XX_j$,  i.e.,  a total of
$2m $ vectors of length $n$. 
 
\subsection{Implementation with ``Downdating QR''}\label{sec:ddqr} 
A  numerically effective alternative  to the normal equations is based on exploiting the QR factorization.
Here we will focus on Problem~\nref{eq:gammak} of the formulation of AA discussed above. 
In principle the idea of using QR is straightforward.
Start with a QR factorization of $\FF_j$,
which we write as $\FF_j = Q R$  with $ Q \in \ \RR^{n \times m_j}$
where we recall the notation $m_j = \min \{ m, j\}$ and
$ R \in \ \RR^{m_j \times m_j}$. Then obtain $\gamma\up{j}$  by simply solving the
triangular system 
$R \gamma\up{j} =  Q^T f_j$. Such a trivial implementation runs into a practical
difficulty in that recomputing the QR factorization of $\FF_j$ at each step is
wasteful as it ignores the evolving nature of the block $\FF_j$.

Assume at first that $j \le  m-1$.
Then at the end of the $j$-th step represented in the main loop
of Algorithm~\ref{alg:AAm}, one column is added to
$\FF_{j}$ (which has $j$ columns) to obtain $\FF_{j+1}$ (which has $j+1 \le m$ columns).
If $\FF_{j}  = Q  R $ and the new column $v \equiv \Delta f_j$ is added
then we just need to carry out one additional step of the Gram-Schmidt process
whereby this $v$ is orthonormalized against the existing columns of $Q$, i.e., we write
$\hat q = v - Q  h $ where $h = Q^T v$ and then $q_{j+1} = \hat q / \rho $
where $\rho = \| \hat q \|_2 $.
Hence, the new factorization is
\eq{eq:NewQR}
 \FF_{j+1} = [\FF_{j}, v] = [Q ,  q_{j+1}] \begin{pmatrix} R  & h \\ 0 & \rho \end{pmatrix}
  \equiv \tilde Q \tilde R .
\en

%%% CHECK

Assume now that $j>m-1$. Then, to form $\FF_{j+1}$ in the next step, a column $v$
is added to $\FF_{j}$ while the oldest one, $\Delta f_{m_j}$, must be discarded
to create room for the new vector. Recall that
the number of columns must stay  $ \le m$.  This calls for a different strategy
based on `downdating' the QR factorization, i.e., updating a QR factorization
when a column of the original matrix is deleted.

\typeout{========================================Issue with indexing Q. Start at 0?}
\typeout{========================================INDEXING in aa-tgs to fix} 

To simplify notation in describing the process, we will remove indices and write
our current $\FF_j$ simply as $F = [v_1,v_2,\cdots,v_m]$ ($m$ columns) and
assume that $F$ was previously factorized as $F=QR$.  The aim is to build the
QR factorization of $[v_2, v_3, \cdots, v_m]$ from the existing factors $Q$, $R$
of $F$. Once this is done we will be in the same situation as the one where
$j \le m-1$ which was treated above, and we can finish the process in the same way
as will be seen shortly. We write
$ F = [v_1, v_2, \cdots, v_m]$, $ R = [ r_1, r_2, \cdots, r_m] $ and denote the
same matrices with their 1st columns deleted by: \eq{eq:QRknot1} F\dwn{-} =
[v_2, v_3, \cdots, v_m] , \qquad H = [ r_2, r_3, \cdots, r_m] .  \en The matrix
$H $ is of size $m \times (m-1)$ and has an upper Hessenberg form, i.e.,
$h_{ij} = 0 $ for $i>j+1$.  The matrix $F\dwn{-} $, satisfies \eq{eq:FQH}
F\dwn{-} = Q H \en and our goal is to obtain a QR factorization of $F\dwn{-}$
from this relation.  For this we need to transform $H $ into upper triangular
form with the help of Givens rotation matrices \cite{GVL-book} as is often done.  Givens
rotations are matrices of the form:
\[
G(i,k,\theta) = \begin{pmatrix}
1       &  \ldots &     0       &      & \ldots  & 0         &   0     \cr
\vdots  & \ddots  &     \vdots  & \vdots & \vdots  & \vdots  &\vdots  \cr  
   0    & \cdots  &     c       & \cdots &  s      & \cdots  & 0      \cr  
\vdots  & \cdots  &     \vdots  & \ddots & \vdots  & \vdots  &\vdots  \cr  
   0    & \cdots  &     -s      & \cdots &  c      & \cdots  & 0      \cr  
 \vdots & \cdots  &     \vdots  & \cdots & \vdots  & \cdots  & \vdots \cr  
 0    &  \ldots &     0       &        & \ldots  &         &  1
\end{pmatrix} 
 \begin{array}{l} 
  \mbox{ \ } \\ \mbox{ \ } \\ i \\\mbox{ \ }  \\ k \\ \mbox{ \ } 
 \\ \mbox{ \ } 
 \end{array} \]
with $ c = \cos \theta$ and $s = \sin \theta$.
This matrix represents a rotation  of angle $-\theta$ in the span of $e_i$ and $e_k$.
A rotation of this type is usually applied to transform some entry in the $k$-th row of
a matrix  $A$ to zero  by an appropriate choice of $\theta$.
If $H  = \{ h_{ij} \}_{i=1:m,j=1:m-1}$ we can find  a rotation
$G_1 =G(1,2,\theta_1)$ so that
$(G_1 H)_{2,1} = 0$. The values of $c$ and $s$ for this 1st rotation are
\[
  c = \frac{h_{11}}{\sqrt{h_{11}^2 + h_{12}^2}}, \quad
  s = \frac{h_{12}}{\sqrt{h_{11}^2 + h_{12}^2}}.
  \]
This is then followed by applying a rotation
$G_2=G(2,3,\theta_2)$ so that $(G_2 (G_1 H))_{3,2} = 0$, etc.
Let $\Omega $ be the composition of these rotation matrices:
\[
  \Omega = G_{m-1} G_{m-2} \cdots G_{2} G_{1} . 
\]
The process just described transforms $H$ to an $m\times (m-1)$ upper triangular
matrix  with a zero last row:
\[
  \Omega H  = \begin{pmatrix}  \hat R \\ 0 \end{pmatrix} . 
\]
  Defining $\hat Q \equiv Q  \Omega^T \equiv [\hat q_1, \hat q_2, \cdots, \hat q_m]$
and $\hat Q\up{-} \equiv [\hat q_1, \cdots, \hat q_{m-1}]$, we see that: 
\eq{eq:DDqr}
F\dwn{-} = Q  H  = (Q  \Omega^T) \ (\Omega H )  = \hat Q  \times
\begin{pmatrix} \hat R \\ 0 \end{pmatrix} \  = \hat Q\up{-} \hat R , 
\en
which is the factorization we were seeking.  Once the factorization in
\nref{eq:DDqr} is available, we can proceed just as was done in the full window
case ($j\le m-1$) seen earlier to add a new vector $v$ to the subspace, by
updating the QR factorization of $[F\dwn{-},v]$ from that of $F\dwn{-}$, see
equation \nref{eq:NewQR}.  Computing the downdated QR factors in this way is
much less expensive than recomputing a new QR factorization.

This procedure yields a pair of matrices $Q_j, R_j$ such that
$\FF_j = Q_j R_j$ at each step $j$.
  The
  solution to the least-squares problem in Line~8 is
  $\gamma\up{j} = R_j\inv \eta\up{j}$ where $\eta\up{j} = Q_j^T f_j$.  Then,
  clearly in Line~9 of Algorithm~\ref{alg:AAm}, we have
  \eq{eq:8y}
  f_j - \FF_j \gamma\up{j} = f_j - (Q_j R_j) \gamma\up{j}
  = f_j - (Q_j R_j) R_j\inv \eta\up{j} = f_j -  Q_j\eta\up{j} , 
  \en 
  and so Lines~8-9 need to be replaced by:
  \begin{align}
    (8a) & & \eta\up{j} &= Q_j^T f_j ; \quad \gamma\up{j}  = R_j\inv \eta\up{j}
             \label{eq:8a}\\
    (9a) & & x_{j+1} &= (x_j - \XX_j \gamma\up{j}) + \beta (f_j - Q_j \eta\up{j} ) . 
    \label{eq:9a}
  \end{align}

It is clear that the matrix $Q_j$ can be
computed `in-place' in the sense that the new $Q_j$ can be computed and
overwritten on the old one without requiring additional storage.
%%
%%%Other changes to Algorithm~\ref{alg:AAm} are as follows.
Also, it is no longer necessary to store $\FF_j$ in Line~7. Instead,
we will perform a standard factorization where the vector
$v = \Delta f_{j-1}$ is orthonormalized against those previous $q_i$'s that are saved.
When $j > m-1$ a downdating QR factorization is performed to prepare the
matrices $\hat Q\up{-}, \hat R$ from which the updated QR is performed.

%%Once $\Delta f_{j-1} = f_j-f_{j-1}$ is computed in Line~7, proceed with the
%%downdating QR to compute $Q_j$, from $Q_{j-1}$ and $\Delta f_{j-1}$.  This
%%will result in the relation $\FF_j = Q_j R_j$ but $\FF_j$ is not saved.

We will refer to this algorithm as \emph{AA-QR} if there is need to specifically
emphasize this particular implementation of AA.  It will lead us to an
interesting variant of AA to be discussed in Section~\ref{sec:TGS}.
  
  \iffalse
  \betab \> 8a \> \> Compute
  $\eta\up{j} = Q_j^T f_j $; \quad
  $ \gamma\up{j}  = R_j\inv \eta\up{j}$ \\
  \> 9a\> \> Compute
  $x_{j+1} = (x_j - \XX_j \gamma\up{j} ) + \beta (f_j - Q_j \eta\up{j} ) $ .
  \entab
  \fi
  
%%  It is clear that $\FF_j$ and $Q_j$ are redundant and so we need not save them
%%  both.  Instead of solving 
%%  and this leads to
%%  another variant to be discussed in Section~\ref{sec:TGS}.

  \subsection{Simple downdating}\label{sec:simpleDD}
%%  We conclude this section by briefly
  We now briefly discuss an alternative method for downdating the
  QR factorization. Returning to  \nref{eq:FQH} we split the Hessenberg matrix $H$ into
  its $(m-1)\times (m-1)$ lower block which is triangular and which we denote by $R$ and
  its first row which we denote by $h_1^T$. With this 
  \nref{eq:FQH} can be  rewritten as follows:
  \begin{align}
    F\dwn{-} &=  q_1 h_1^T   + [q_2, q_2, \cdots, q_m] R  \nonumber \\
             & = [Q_{(-)}  + q_1 h_1^T R^{-1} ] R \nonumber  \\
             & \equiv [Q_{(-)}  + q_1 s^T  ] R  \label{eq:qminr}
  \end{align}
  where we have set $s^T \equiv h_1^T R^{-1} $.
  %%Here $ R$ is the $(m-1)\times (m-1)$ upper triangular matrix obtained by removing the first
  %%%row of $H$ and $h_1^T $ is this first row of $H$.
  From here, there are two ways of proceeding.  We can either get a QR factorization of
  $Q_{(-)} + q_1 s^T $ which can then be retrofitted into \nref{eq:qminr} or we
  can provide some other orthogonal factorization that can be used to solve the
  least-squares problem effectively.
  
  Consider the first approach.
  One way to get the QR factorization of $Q_{(-)}  + q_1 s^T $ is via the Cholesky
  factorization of the matrix $ [Q_{(-)}  + q_1 s^T  ]^T [Q_{(-)}  + q_1 s^T  ] $.
  Observing that the columns of $Q_{(-)} $ are orthonormal and orthogonal to $q_1$, we see that
  \eq{eq:qrfromchol}
    [Q_{(-)}  + q_1 s^T  ]^T [Q_{(-)}  + q_1 s^T  ] = I + s s^T . 
  \en
  As it turns out matrices of the form $I+ss^T$ lead to an inexpensive Cholesky
  factorization due to a nice structure that can be
  exploited  \footnote{In a nutshell, the entries below the main diagonal of the 
    $j$-th column of the $L$ matrix in the LDLT decomposition of the matrix $I + s s^T$
    are a constant times the entries $j+1:n$ of $s$. This observation leads to
    a Cholesky factorization that is inexpensive and easy to compute and to exploit.}.
  From this factorization,
  say,   $I + s s^T = G G^T$ where $G$ is lower triangular we get the updated factorization:
   \begin{align} 
   F\dwn{-} & =  (Q_{(-)}  + q_1 s^T ) R \label{eq:supd0} \\
  &=  \underbrace{ [(Q_{(-)}  + q_1 s^T)G^{-T}  ]}_Q \ \underbrace{[G^T R ]}_R  \equiv Q R .  \label{eq:supd1} 
  \end{align} 
  The Q-factor defined above
  can be verified to be indeed an orthogonal matrix while the R factor is clearly
  upper triangular.  Implementation and other details are omitted.

  The second approach is a simplfied, possibly more appealing, procedure. Here
  we no longer rely on a formal QR factorization, but a factorization that
  nonetheless exploits an orthogonal factor.
  Starting from \nref{eq:supd0}  we now write:
  \eq{eq:supd2} F\dwn{-} =
  %%(Q_{(-)} + q_1 s^T ) R =
  \underbrace{ [(Q_{(-)} + q_1 s^T) (I+ss^T)^{-1/2}
    ]}_Q \ \underbrace{(I+ss^T)^{1/2} R ]}_S \equiv Q S .
  \en
%  Note in passing that the decomposition   $F_{(-)} = Q (I+ss^T)^{1/2} $ is just the \emph{polar
%  decomposition}~\cite{GVL-book} of   $F_{(-)}$.
  Note in passing that the product $ Q (I+ss^T)^{1/2} $ is just the
  \emph{polar decomposition} \cite{GVL-book} of the matrix
  $Q_{(-)} + q_1 s^T $.  Here too the structure of $I+ss^T$ leads to
  simplifications, in this case for the terms in the fractional powers of
  $I+ss^T$. Indeed, if we set $\lambda = 1 + s^T s$, then it can be shown that
  \begin{align}
    (I+ss^T)^{1/2}  & = I + \alpha  s s^T & \mbox{with}   & & \alpha &= 
 \frac{1}{1 + \sqrt{\lambda }}
  \label{eq:prws1} \\
    (I+ss^T)^{-1/2}  & = I - \beta s s^T  & \mbox{with}   & & \beta & =
      \frac{1}{ \lambda + \sqrt{\lambda}} 
  \label{eq:prws2} .
  \end{align}
  Noting that $Q^T Q = I$,
  we wind up with a factorization of the form $F_{(-)}=Q S$ where $Q$ has orthonormal columns
  but  $S=(I+s s^T)^{1/2} R $ is not triangular.
  It is possible to implement this by keeping the matrix $S$ as a square matrix or
  in factored form in order to exploit   its inverse which is  
  $S\inv =R\inv (I-\beta s s^T) $ where $\beta $ is given in \nref{eq:prws2}.
  The next step, adding a vector to the system, can be processed as in a standard
QR factorization, see eq. \nref{eq:NewQR} where the block $R$ is to be replaced by $S$.

These two simple alternatives to the Givens-based downdating QR do not seem to
have been considered in the literature. Their main merit is that they focus 
more explicitly on the consequence of deleting a column and show what remedies can be
applied. The deletion of a column leads to the QR-like
factorization~\nref{eq:supd0} of $F_{(-)}$. This resembles a QR factorization
but the Q-part, namely, $Q_{(-)} + q_1 s^T $ is not orthogonal.  The standard
remedy is to proceed with a downdating QR factorization which obtains QR factors
from \nref{eq:FQH} instead of \nref{eq:qminr}.  Instead, the remedies outlined above
proceed from \nref{eq:qminr} from which either the QR factorization
of this matrix or its polar decomposition are derived. In both cases, the results are then
retrofitted into \nref{eq:supd0} to obtain a corrected decomposition, with an
orthogonal Q factor.

\subsection{The Truncated Gram-Schmidt (TGS) variant}\label{sec:TGS}
There is an appealing alternative to AA-QR worth considering: bypass the
downdating QR, and just use the orthogonal factor $Q_j$ obtained from a
Truncated Gram-Schmidt (TGS) orthogonalization. This means that  a new vector $q_j$
is obtained by orthonormalizing $\Delta f_{j-1}$ against the previous
$q_i'$s that are saved, and when we 
delete a column we just omit the adjustment consisting of the downdating QR process.
Without this adjustment the resulting
factors $Q_j, R_j$ are no longer a QR factorization of $\XX_j$
when a truncation has taken place, i.e., when $j>m$.
\typeout{=========== REDO NOTATOION $m_j$ --> } 
%%When $j \le m$ (which is always true in the full window case, $m = \infty$)
%%the two methods will be equivalent.
The solution to the least-squares problem  now uses $Q_j$
instead of $\FF_j$, i.e., we minimize $\| f_j - Q_j \gamma \|_2$, ending with 
$ 
\gamma\up{j} = Q_j^T f_j . 
$
In the limited window case, the resulting least-squares approximation to $f_j$, i.e.,
$Q_j \gamma\up{j}$, is no longer equal to the one resulting from  the Downdating QR
approach which solves the original LS problem \nref{eq:theta} with the set
$\FF_j$ by exploiting its QR factorization. In Anderson Acceleration with
Truncated Gram-Schmidt (AA-TGS), \emph{the resulting matrix $Q_j$ is not required to be the
Q-factor of $\FF_j$ and so the range of $\FF_j $ is different from that of
$ Q_j$}. Therefore  the method is \emph{not equivalent} to AA in the finite window case.

We can write the orthogonalization process at the $j$-th step as \eq{eq:recTGS1}
q_j = \frac{1}{s_{jj}} \left[\Delta f_{j-1} - \sum_{i=[j-m+1]}^{j-1} s_{ij}
  q_i\right] \ .  \en Here the scalars $s_{ij}, i=[j-m+1],\cdots,j-1$ are those
utilized in a modified Gram-Schmidt process, and $s_{jj}$ is a normalizing
factor so that $\| q_{j} \|_2 = 1$.

With  $m_j \equiv \min \{ m,j+1\}$ define  the $m_j\times m_j$ upper triangular matrix
$S_j = \{s_{ik}\}_{i={[j-m+1]}:j,k={[j-m+1]}:j}$ resulting from the
orthogonalization process in \nref{eq:recTGS1}.
Having selected what set of columns to use in place of $\FF_j$,
the question now is how do we
compute the solution $x_{j+1}$, i.e., how do we change Line~9 of Algorithm~\ref{alg:AAm}?
One is tempted to take equations (\ref{eq:8a}--\ref{eq:9a}) of AA-QR as a model where
the matrix $R_j$ in \nref{eq:8a} is replaced by
\footnote{Recall that  for convenience the indexing of the columns in $Q$ and other arrays
 in Section~\ref{sec:ddqr} starts at 1 instead of 0.}  $S_j$.
However, the relation $\FF_j = Q_j R_j$ is only valid in the full-window case,
so the relation 
$f_j - \FF_j R_j\inv \eta\up{j} = f_j -  Q_j\eta\up{j} $
in  \nref{eq:8y} no longer holds, and so we cannot write
$\bar x_j$ in the form $\bar x_j = x_j -  \XX_j R_j \inv \eta\up{j}$.
The solution is to make use of the basis $U_j$  that is defined from the
set of $\Delta x_i$'s in the same way that $Q_j$ is defined from the $\Delta f_i$'s.
Thus, we compute the $j$-th column of $U_j$ by the same
process we applied to obtain $q_j$ namely:
%%%% 
\eq{eq:recTGS2}
%%q_j = \frac{1}{s_{jj}} \left[\Delta f_j - \sum_{i=[j-m+1]}^{j-1} s_{ij} q_i\right] , \quad
u_j = \frac{1}{s_{jj}} \left[\Delta x_{j-1} - \sum_{i=[j-m+1]}^{j-1} s_{ij} u_i\right] ,
\en
where the scalars $s_{ij}$ are the same as those utilized to obtain $q_j$ in \nref{eq:recTGS1}.
With this, the relations (\ref{eq:AA1}--\ref{eq:AA1c}) are replaced by 
\eq{eq:AA2N} \bar x_j = x_j - U_j \eta\up{j} ,
\quad  \bar f_j = f_j - Q_j \eta\up{j} , \quad
x_{j+1} = \bar x_j + \beta \bar f_j . 
\en

The procedure  is sketched as Algorithm~\ref{alg:TGS}.
Let us examine the algorithm and compare it with the downdating QR
version seen in Section~\ref{sec:ddqr}.
When $j  \le  m  $ (full window case), the $i$ loop starting in Line~6, begins at 
$i=0$, and the block in Lines 6--12
essentially performs a Gram-Schmidt QR factorization of the matrix $\FF_j$, while
also enforcing identical operations on the set $\XX_j$.  A result
of the algorithm is that 
\eq{eq:QRrel} \FF_j = Q_j S_j ; \qquad \XX_j = U_j S_j.
\en
The above relations are not valid when $j>m$.
In particular, the subspace spanned by $Q_j$ is not the span of 
$\FF_j$ anymore. We saw how to deal with this
issue in Section~\ref{sec:ddqr} in order to recover a QR factorization for
$\FF_j$ from a QR downdating process. AA-TGS provides an alternative solution by relaxing the
requirement of having to use a QR factorization of $\XX_j$.

%%%%  Xⱼ = Uⱼ Sⱼ   and Fⱼ = Qⱼ Sⱼ 
%%%%  !! EXPLAIN why change of notation from rij to sij

\begin{algorithm}[H]
  \centering
  \caption{AA-TGS(m)}\label{alg:TGS}
    \begin{algorithmic}[1]
  \State \textbf{Input}: Function $f(x)$, initial guess $x_0$, window size $m$ \\
  Set $f_0 \equiv f(x_0)$, \quad  $x_1=x_0+\beta_0f_0$, \quad 
  $f_1 \equiv f(x_1)$
\For{$j=1,2,\cdots,$ until convergence} 
\State $u := \Delta x = x_j-x_{j-1}$
\State $q := \Delta f = f_j - f_{j-1}$
\For{{$i=[j-m+1],\ldots, j-1$}} 
\State $s_{ij} :=  (q, q_i) $
\State $u := u - s_{ij} u_i$
\State $q := q - s_{ij} q_i$
\EndFor
\State $s_{jj}=\Vert q \Vert_2$
\State $q_{j} := q /s_{jj}$, \quad  $u_{j} := u /s_{jj}$\
\State {Set ${Q}_j = [q_{[j-m+1]}, \ldots, q_{j}],\quad {U}_j = [u_{[j-m+1]}, \ldots, u_{j}]$ } 
\State Compute $\eta\up{j} = {Q}^{\top}_jf_j$
\State $x_{j+1} = (x_{j}-{U}_j\eta\up{j})+\beta_{j} (f_j- {Q}_j\eta\up{j})$
\State $f_{j+1} = f(x_{j+1})$
\EndFor
\end{algorithmic}
\end{algorithm}

To better understand the process, we  examine what happens specifically when $j=m+1$, focussing on  the set of
$q_i$'s. We adopt the same lightened notation as in Section~\ref{sec:ddqr},
and in particular the indexing  in arrays  $Q$ and $S$ start at 1 instead of zero.
Before the orthogonalization step we have the QR factorization
$\FF_m = Q_m S_m$. Dropping the oldest (first) column is captured by equations 
\nref{eq:QRknot1} and \nref{eq:FQH}. We rewrite \nref{eq:FQH} as follows:
\[
  F_{(-)} = Q H = [q_1, q_2, ..., q_m] H
  = q_1 h_1^T + [q_2, q_3, \cdots, q_m] S_{(-)} . 
\]
As before $H$ is $m\times (m-1)$ Hessenberg matrix obtained from the upper
triangular matrix $S_m$ by deleting its first column.  The row vector $h_1^T$ is
the first row of $H$ (1st row of $S_m$, omitting its first entry).  The matrix
$ S_{(-)} $ is the $(m-1) \times (m-1) $ upper triangular matrix obtained from
$S_m$ by deleting its first row and its first column.  Thus, if we let
$Q_{(-)} = [q_2, q_3, \cdots, q_m]$ as before, we obtain a reduced QR
factorization but it is for a different matrix, i.e.,
\eq{eq:Fmm} F_{(-)} - q_1
h_1^T = Q_{(-)} S_{(-)} .  \en After the truncation, the new vector
$v_{m+1} = \Delta f_{m}$ is orthonormalized against $q_2, q_3, \cdots, q_m$,
leading to the next vector $q_{m+1}$.  Then we can write:
  \begin{align} 
    %% [F_{(-)} - q_1 h_1^T, \Delta f_{m}]
        [F_{(-)} - q_1 h_1^T,  v_{m+1}]
    &= [Q_{(-)},q_{m+1}] \times
      \begin{bmatrix} S_{(-)} & s_{2:m,m+1} \\ 0 & s_{m+1,m+1} \end{bmatrix} \nonumber \\
    & \equiv Q_{m+1} S_{m+1} . \label{eq:Fmm2}
  \end{align} 
  Therefore, at step $j=m+1$ the pair of matrices $Q_{m+1}, S_{m+1}$
  \emph{produce a QR factorization of
    a rank-one perturbation of the matrix   $\FF_{m+1}$. }

  %% Not a useful observation Note that $q_1$ is a multiple of $v_1$.
  %%==============
  
Herein lies the only difference between the two methods: the Downdating QR
enforces the relation $\FF_j = Q_j R_j$ by correcting the relation~\nref{eq:Fmm} into a valid
factorization of $F_{(-)}$ before proceeding. We saw in Sections~\ref{sec:ddqr} and \ref{sec:simpleDD} how this
can be done. This ensures that we obtain the same solution as with AA. 
In contrast, AA-TGS simplifies the process by not insisting on having
a QR factorization of $\FF_j$. Instead, it exploits a QR factorization of a 
modified version of $\FF_j$, see \nref{eq:Fmm2}  for the case $j=m+1$.
Note that when $j>m+1$ the rank-1 modification on the left-hand side of \nref{eq:Fmm2} becomes
a sum  of rank-one matrices.

\iffalse 
After step $j$ of Algorithm~\ref{alg:TGS} is applied, we would have built
the orthonormal basis $Q_j = [q_{m_m+1}, \cdots, q_j] $ along with a paired
system $U_j = [u_{m_m+1}, \cdots, u_j]$.  Note that $Q_j$ has orthonormal
columns but not $U_j$.  The vector $\eta\up{j}$ computed in Line~14 by a simple
matrix-vector product, is the least-squares solution of
$ \min_\theta \| f_j - Q_j \theta \|_2$ which is not necessarily the same as the
solution of the least-squares problem \eqref{eq:gammak} - since the span of
$\FF_j$ differs from the span of $Q_j$ when $j > m$.  Finally, Line 15 computes
the next iterate $x_{j+1}$ using the two paired bases $Q_j$ and $U_j$ and
$\eta\up{j}$.
NNNNNNNNNNNNNNNNNNNNN============== 
Note that as for classical Anderson, we can also define
$\bar{x}_j$ and $\bar{f}_j$ and rewrite $x_{j+1}$ in the following form:
\eq{eq:AA2N}
\bar x_j = x_j-U_j {\theta}_{j} ,\quad \bar f_j = f_j-Q_j
{\theta}_{j} , \quad x_{j+1} =\bar x_j + \beta_j \bar f_j .  \en

\fi 

Let us now consider the full window case, i.e., the situation $j \le m$.
It is easy to see that in this case the subspaces spanned by $\FF_j$ and $Q_j$ are
identical and in this situation the iterates $x_{j+1}$ resulting from AA and
AA-TGS will be the same. In particular, when $m = \infty$ this will always be the
case. We will state this mathematical equivalence of the two algorithms in the 
 following proposition.
 \begin{proposition}\label{prop:infcase}
   Assuming that they start from the same initial guess $x_0$,  AA-TGS ($\infty$) and AA$(\infty)$
   return the same   iterates at each iteration, in exact arithmetic.
  In addition, they also break down under the same condition.
\end{proposition}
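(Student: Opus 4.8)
The plan is to argue by induction on $j$, with the engine of the proof being the factorization relations \nref{eq:QRrel} that hold throughout the full-window case. The base case is immediate: both algorithms are initialized identically, with $f_0 = f(x_0)$, $x_1 = x_0 + \beta_0 f_0$, and $f_1 = f(x_1)$, so the first two iterates (and hence $f_1$) coincide. For the inductive step I would assume that at step $j$ the two methods have produced the same $x_j$ and $f_j$, hence the same difference blocks $\XX_j$ and $\FF_j$. Since $m=\infty$ no column is ever discarded, so the truncated Gram--Schmidt loop of Algorithm~\ref{alg:TGS} degenerates to an ordinary reduced QR factorization of $\FF_j$: it yields a $Q_j$ with orthonormal columns, an upper-triangular $S_j$, and a paired block $U_j$ obtained by replaying the \emph{same} scalars $s_{ij}$ on the $\Delta x_i$'s, so that $\FF_j = Q_j S_j$ and $\XX_j = U_j S_j$. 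In particular $\Span\{Q_j\} = \Span\{\FF_j\}$.

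The heart of the matter is to show that $\bar f_j$ and $\bar x_j$ agree. For AA, the vector $\FF_j \gamma\up{j}$ is the orthogonal projection of $f_j$ onto $\Span\{\FF_j\}$, since $\gamma\up{j}$ minimizes $\|f_j - \FF_j \gamma\|_2$. For AA-TGS, the vector $Q_j \eta\up{j} = Q_j Q_j^T f_j$ is the orthogonal projection of $f_j$ onto $\Span\{Q_j\}$. Because the two subspaces coincide, these projections are literally the same vector, so the linear residuals $\bar f_j$ match. To match the update to $x_j$, I would equate $\FF_j \gamma\up{j} = Q_j \eta\up{j}$, substitute $\FF_j = Q_j S_j$ to get $Q_j S_j \gamma\up{j} = Q_j \eta\up{j}$, and left-multiply by $Q_j^T$ (using $Q_j^T Q_j = I$) to obtain $S_j \gamma\up{j} = \eta\up{j}$. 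Then $\XX_j \gamma\up{j} = U_j S_j \gamma\up{j} = U_j \eta\up{j}$, so the corrections to $x_j$ agree as well. With $\bar x_j$, $\bar f_j$, and $\beta$ all identical, \nref{eq:AA1c} and the AA-TGS update produce the same $x_{j+1}$, whence $f_{j+1}=f(x_{j+1})$ agrees too, closing the induction.

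For the breakdown claim I would observe that the sole point of failure in either scheme is a rank deficiency of $\FF_j$, i.e.\ the newest column $\Delta f_{j-1}$ lying in the span of the earlier columns. In AA-TGS this surfaces as $s_{jj} = \|q\|_2 = 0$ in the normalization step, forcing a division by zero; in AA, solving \nref{eq:gammak} through the QR (or normal-equation) route, it is exactly the loss of full column rank of $\FF_j$ that makes the least-squares solution ill-defined. Since both reduce to the same rank condition on $\FF_j$, the two methods break down simultaneously.

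I expect the main obstacle to be conceptual rather than computational: carefully pinning down that $\FF_j\gamma\up{j}$ is genuinely the orthogonal projection of $f_j$ onto $\Span\{\FF_j\}$, and that $S_j$ is invertible \emph{precisely} when $\FF_j$ has full column rank. This invertibility is what legitimizes the passage $S_j\gamma\up{j}=\eta\up{j}$ and simultaneously identifies the common non-breakdown hypothesis, thereby tying together the two assertions of the proposition. Once this is made precise, every individual step is short and the induction is routine.
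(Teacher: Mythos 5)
Your proof is correct and follows essentially the same route as the paper: the paper's own (one-sentence) proof rests precisely on the span equality $\Span\{Q_j\}=\Span\{\FF_j\}$ in the full-window case, which is exactly the engine of your induction. You have simply supplied the details the paper leaves to the reader — the orthogonal-projection argument for $\bar f_j$, the paired-basis identity $\XX_j\gamma^{(j)}=U_j\eta^{(j)}$ via $S_j\gamma^{(j)}=\eta^{(j)}$, and the identification of $s_{jj}=0$ with rank deficiency of $\FF_j$ as the common breakdown condition.
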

The proof is straightforward and relies on the equality
$\Span \{Q_j \} = \Span \{\XX_j\}$ for all $j$. Though rather trivial this
property is worth stating explicitly because it will help us simplify our
analysis of AA in the full-window case.  It is clear that we can also state a
more general result for the restarted versions of the algorithms
\footnote{Restarting can be implemented for any of the algorithms seen in this
  article. By restarting we mean that every $k$ iterations, the algorithm starts
  anew with $x_0$ replaced by the most recent approximation computed.  The
  parameter $k$ is called the `restart dimension', or `period'. Other restarting
  strategies are not periodic and restart instead when deemeed
  necessary by the numerical behavior of the iterates.}.

We end this section with an important addition to the AA-TGS algorithm whose
goal is to circumvent some numerical stability issues.  The two recurrences
induced by Equations~(\ref{eq:recTGS1}--\ref{eq:recTGS2}) are linear recurrences
that can lead to instability.  A mechanism must be added to monitor the behavior
of the above sequence with the help of a scalar sequence whose numerical
behavior imitates that of the vector sequences. This will help prevent
excessive growth of rounding errors by restarting the process when deemed
necessary. A process of this type was developed in \cite{AATGS}.  Readers
are referred to the article for details.

%%% numer. exp. here

\subsection{Numerical Illustration}\label{sec:numer1}
We will illustrate a few of the methods seen in this paper so far with two
examples. The first example is usually  viewed as an easy problem to solve because
 its level of nonlinearity can be characterized as mild. The second is an optimization
 problem in molecular physics with  highly nonlinear  coefficients.

\subsubsection{The Bratu problem}
The Bratu problem appears in modeling  
thermal combustion, radiative heat transfer, thermal reaction,
among other applications, see, e.g., \cite{Mohsen14,jacobsen02} for references.
It consists in the following   nonlinear elliptic Partial Differential Equation (PDE)
with Dirichlet boundary conditions on an open domain $\Omega$:
{\begin{align*}
   \Delta u + \lambda e^ u &= 0  \ \ \text{in} \ \ \Omega \\ 
   u(x,y)  &= 0  \ \text{for} \  (x,y) \in \partial \Omega  . 
 \end{align*}
 Here $\lambda$ is a parameter and there is a solution only for values of
 $\lambda $ in a certain interval.
 We set $\lambda $ to the value $\lambda = 0.5$ and define the domain $\Omega$ to
 be the square $\Omega = (0, 1) \times (0, 1)$. Discretization with
 centered finite differences  using 100 interior points in each direction results
 in a system of nonlinear equations $f(x) = 0$ where $f$ is a mapping from
 $\mathbb{R}^{n}$ to itself, with $n = 10,000$.

  The first step in applying acceleration techniques to solve the problem is to
  formulate an equivalent fixed-point iteration of the form
\eq{eq:gmux} g(x) = x - \mu f(x) . 
\en
The reader may have noted the negatve sign used for $\mu$ instead of the
positive sign seen in earlier formulas for the mixing scheme \nref{eq:mixing2}.
In earlier notation $f(x)$ represented the `residual', i.e., typically the
negative of the gradient of some function $\phi$, instead of the
gradient as is the case here. 
What value of $\mu$ should we use?
Noting that  the Jacobian of
$g$ is
\[ \frac{\partial g }{\partial x}  (x)
  =  I -\mu \frac{\partial f }{\partial x} (x)\]
a rule of thumb, admitedly a vague one, is that a small value is needed
only when we expect the Jacobian of $f$ to have large values.
For all experiments dealing with the Bratu problem, we will set $\mu = 0.1$.

\subsubsection{Molecular optimization with Lennard-Jones potential}
The goal of geometry optimization is to find atom positions that minimize total
potential energy as described by a certain potential.  The Lennard-Jones (LJ) 
potential has a long history and is commonly used is computational chemistry,
see \cite[p. 61]{Kittel-book}. Its aim is to  represent both attration and repulsion
forces between atoms by the inclusion of two
terms with opposite signs:
\begin{equation}
        E = \sum_{i=1}^{N} \sum_{j=1}^{i-1} 4 \times
    \left[\frac{1}{\|x_i - x_j\|_2^{12}} - \frac{1}{\|x_i - x_j\|_2^{6}} \right] .
\end{equation}
Each $x_i$ is a 3-dimensional vector whose components are the coordinates of the
location of atom $i$. A common problem is to start with a certain configuration
and then optimize the geometry by minimizing the potential starting from that
position. Note that the resulting position is not a
global optimum but a local minimum around some initial
configuration. In this particular example, we simulate an Argon cluster by
taking the initial position of the atoms to consist of a perturbed initial
Face-Cented-Cubic structure \cite{meyer1964new}.  We took 3 cells per direction,
resulting in 27 unit cells. FCC cells include 4 atoms each and so we end up
with a total of 108 atoms.  The problem can be challenging  due to the high powers in
the potential.

\iffalse
\bigskip
      \fbox{ \parbox{0.2\textwidth}{Initial Config}}$\rightarrow$ 
     \fbox{ \parbox{0.35\textwidth}{Iterate to mininmize $\| \nabla E \|_2^2$}} $\rightarrow$ 
      \fbox{ \parbox{0.20\textwidth}{Final Config}}
\bigskip
\fi 

Instead of a nonlinear system of equations as was the case for the Bratu problem,
we now need to minimize $E(\{x_i\})$.  The gradient of $E$ with respect to atom
positions can be readily computed.  If we denote by $x$ the vector that
concatenates the coordinates of all atoms, we can call $f(x) $ this gradient:
$f(x) \ = \ \nabla E(x)$.  The associated fixed-point mapping is again of the
form \nref{eq:gmux} but this time we will need to take a much smaller value for $\mu$,
namely 
$\mu = 0.0001$.  Larger values of $\mu$ often result in unstable iterates,
overflow, or convergence to a non-optimal configuration.  Note that 
we are seeking a local minimum and as such we do need to
verify for each run that the
scheme being tested converges to the correct optimal configuration, in this case
a configuration that thas the potential $E_{opt} = -579.4639..$

 %%A property of the Bratu problem is that it is almost linear despite the
 %% exponential term. 

  \subsubsection{Gradient Descent, RRE,  Anderson, and Anderson-TGS}
  We will illustrate four  algorithms for the two problems discussed above.  The
  first is a simple adaptive gradient descent algorithm of the form
  $x_{j+1} = x_j - \mu f(x_j$, where $\mu$ is set adaptively by a very simple
  scheme: if the norm of $f(x_j)$ increases multiply $\mu$ by $.3$ and if it decreases
  multiply $\mu$ by $1.05$. The initial $\mu$ is as defined earlier: $\mu = 0.1$ for Bratu and
  $\mu = 0.0001$ for LJ. We will call this scheme \texttt{adaptGD}.  The second scheme
  tested is a restarted version of the RRE algorithm seen in
  Section~\ref{sec:RRE}. If $m$ is the restart dimension then the scheme
  computes an accelerated solution $y_m$ using $x_0, \cdots, x_{m+1}$ and then
  sets $x_{0} $ to be equal to $y_m$ and the algorithm is continued from this
  $x_0$.  It is interesting to note that we use $x_0, \cdots, x_m, x_{m+1}$ to
  compute $y_m$ which is an update to $x_m$, not $x_{m+1}$, see
  \nref{eq:rre2}. Thus, in effect $x_{m+1}$ is only used to obtain the optimal
  $\gamma$ in \nref{eq:rre2}.  Anderson acceleration takes care of this by the
  extra approximate fixed-point step \nref{eq:AA1c}. We can perform a similar
  step for RRE, and it will translate to \eq{eq:rreMod} x_{m+1} = y_m + \beta
  \bar f_m \en where $\bar f_m = \Delta x_m + \Delta X_0\gamma$ is the linear
  residual.  This modificiation is implemented with $\beta = 1 $ in the
  experiments that follow. In addition
  to the baseline Adaptive GD, we test RRE(3), RRE(5), and Anderson(5,10) for
  both problems. We also tested Anderson-TGS(5,.) with the automatic restarting
  strategy briefly mentioned at the end of Section~\ref{sec:TGS} and
  described in detail in \cite{AATGS}.  We should point out that RRE(5),
  Anderson(5,10), and Anderson-TGS(5,.) all use roughly the same amount of
  memory.
  
  \begin{figure}[H]
  \includegraphics[width=0.49\textwidth]{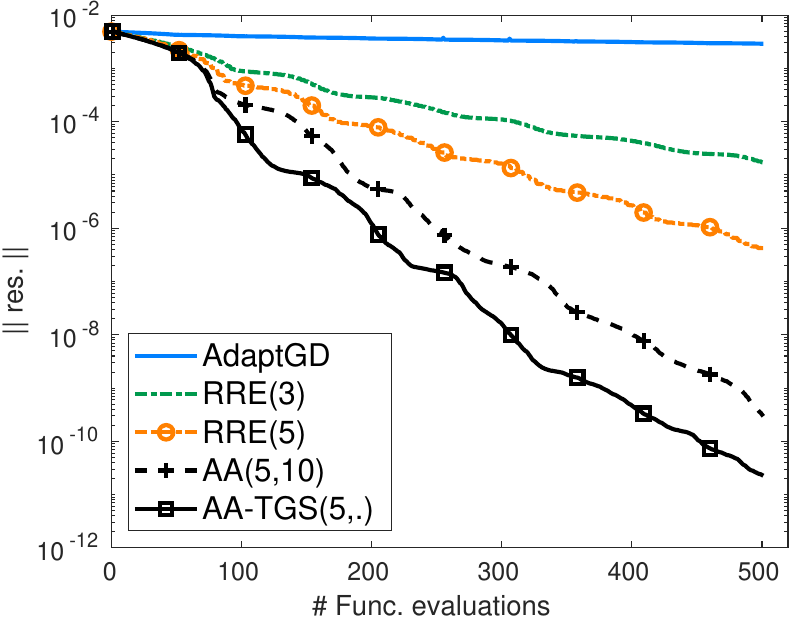}
  \includegraphics[width=0.51\textwidth]{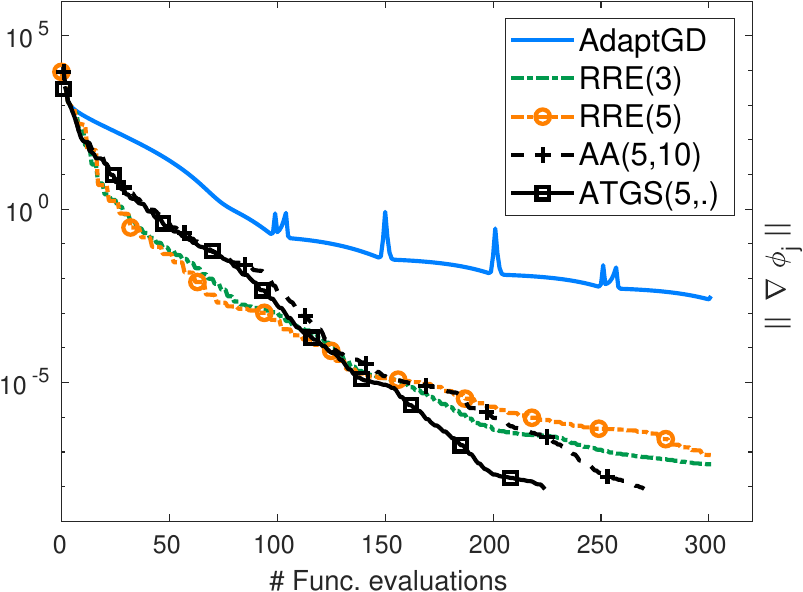}
  \caption{Comparison of 5 different methods
    for the Bratu problem (left) and the Lennard-Jones
    optimization problem (right).\label{fig:Exp1}}
  \end{figure}

  Figure~\ref{fig:Exp1} shows the results obtained by these methods for both
  problems.  Here AA(5,10) stands for AA with a window size of 5, and a restart
  dimension of $k=10$.  RRE($m$) is the restarted RRE procedure described above
  with a restart dimension of $m$, where $m=3$ in the first experiment with RRE
  and $m=5$ in the second.  The plots show the norm of the residual (for Bratu)
  and the norm of the gradient (for LJ) versus the number of function
  evaluations.  All methods start from the same random initial guess and are
  stopped when either the residual norm decreases by $tol = 10^{-12}$ or the
  number of function evaluations exceeds a certain maximum (500 for the Bratu
  problem, 300 for Lennard-Jones). As can be been, for the Bratu problem,
  Anderson and its TGS variant both yield a good improvement relative to the
  simpler RRE schemes.  In contrast, for the Lennard Jones problem, the
  performance of AA(5,10) is close to that of RRE. Surprisingly, RRE(3) which
  uses much less memory does quite well for this example. In both test problems,
  AA-TGS outperforms the standard Anderson algorithm by a moderate margin.  The
  standard Anderson scheme discussed in these experiments is based on the AA-QR
  (AA with QR downdating) implementation discussed in \ref{sec:ddqr}.

\subsection{Theory}
Around the year 2009, Anderson Acceleration started to be noticed
in the linear algebra community, no doubt owing to its simplicity and its success in dealing
with a wide range of problems.  The article~\cite{FangSaad07} discussed \emph{multisecant
  methods}, first introduced as `Generalized Broyden methods' in
%% a 1984    paper by Vanderbilt and Louie
article~\cite{vl:energy84}. These methods can be
called block secant methods in which rank-1 updates of the form
\nref{eq:broyden1updatej} or \nref{eq:broyden2update} are replaced by
rank-$m$ updates. As it turns out, AA is a multisecant method and this
specific link was first unraveled   %%% by Eyert in a 1996    article
in~\cite{eyert:acceleration96} and discussed further in
~\cite{FangSaad07}.  A number of other results were subsequently shown.
Among these is an equivalence between Krylov methods  and Anderson in the linear case as well
as convergence studies for nonlinear sequences.    
This goal of this section is to  summarize these results.

    \subsubsection{AA as a  multi-secant approach}\label{sec:AA3} 

    Acceleration methods, such as AA, do not aim at solving a system like
    \nref{eq:fx} directly. As was seen in Section~\ref{sec:AA1} their goal is to
    accelerate a given fixed-point iteration of the form \nref{eq:fixp}. The
    method implicitly expresses an approximation to the Jacobian via a secant
    relation which puts $\FF_j$ in correspondence with $\XX_j$. Roughly
    speaking, AA develops some approximation to a Jacobian $J$ that satisfies a
    secant condition of the form $\FF_j \approx J \XX_j$.  The classic text by
    \cite[pp. 204-205]{ort}  already mentions AA as a form of quasi-Newton    
    approach. At the time, however, Ortega and Rheinbolt
    viewed the method  negatively,
    possibly due to its potential for numerically unstable behavior.

    As seen in Section~\ref{sec:QN}
 Broyden-type methods replace Newton's iteration:
$x_{j+1} = x_j - J(x_j) \inv f_j$ with something like $ x_{j+1} = x_j - G_j f_j $
where $G_j $ approximates the inverse of the Jacobian $J(x_j)$ at $x_j$ by the
update formula $G_{j+1} = G_j + (\Delta x_j - G_j \Delta f_j) v_j^T $ in which
$v_j$ is defined in different ways see \cite{FangSaad07} for details.
AA belongs to the related class of \emph{multi-secant methods}.  Indeed,
the approximation \nref{eq:AA1c} can be written as:
\begin{align} 
  x_{j+1} &=   x_j+\beta f_j -(\XX_j+\beta \FF_j)\ \theta\up{j}   \label{eq:AA}  \\
          &= x_j - [- \beta I + (\XX_j+\beta\FF_j)(\FF_j^T\FF_j)^{-1}\FF_j^T ] f_j  \label{eq:AA2}  \\
          &\equiv x_j - G_j  f_j  \quad \mbox{with} \quad G_j \equiv - \beta I + (\XX_j+\beta\FF_j)(\FF_j^T\FF_j)^{-1}\FF_j^T 
  . \label{eq:AndQN}
\end{align} 
Thus, $G_{j} $ can be seen as an update to the (approximate) inverse
Jacobian $ G_{[j-m]} = -\beta I$ by the formula: \eq{eq:msec} G_{j} = G_{[j-m]} +
(\XX_j - G_{[j-m]} \FF_j)(\FF_j^T\FF_j)^{-1}\FF_j^T.  \en It can be shown that the
approximate inverse Jacobian $G_j$ is the result of minimizing
$\| G_j + \beta I \|_F$ under the \emph{multi-secant condition} of type II
\footnote{Type I Broyden conditions involve approximations to the Jacobian,
  while type II conditions deal with the inverse Jacobian.}
\eq{eq:mscond} G_j
\FF_j = \XX_j.  \en This link between AA and Broyden multi-secant type updates
was first unraveled by Eyert~\cite{eyert:acceleration96} and expanded upon in
\cite{FangSaad07}. Thus, the method is in essence what we might call a `block
version' of Broyden's second update method, whereby a rank $m$, instead of rank
$1$, update is applied at each step.

%%% review ======================================

    In addition, we also have a multi-secant version of the no-change
  condition \eqref{eq:NoCh}.
  This is just a block version of the no-change condition 
  Equation~\eqref{eq:NoCh} as represented by Equation (15) in  \cite{FangSaad07},
  which stipulates that 
  \eq{eq:NoCh2} 
  (G_j - G_{[j-m]}) q = 0 \quad \forall q \perp \Span \{ \FF_j \} 
  \en
  provided we define  $ G_{[j-m]}= 0$.

%%% ======================== end review 

  The essence of AA is that it approximates $f(x_j - \Delta X_j \ \gamma) $ by
  $f(x_j) - \Delta F_j \ \gamma $. If $J_j$ is the Jacobian at $x_j$, we are
  approximating $J_j \ \Delta X_j $ by $ \Delta F_j $ and aim at making
  $f(x_j - \Delta X_j \ \gamma) $ small by selecting $\gamma$ so that its linear
  approximation $f(x_j) - \Delta F_j \ \gamma $ has a minimal norm.  Thus, just
  like quasi-Newton approaches, the method also aims at exploiting earlier
  iterates in order to approximate $J_j$, or its action on a vector or a set of
  vectors. The main approach relies of two sets of vectors - which we call
  $P_j = [p_{[j-m+1]}, p_{[j-m+1]+1}, \cdots, p_j] $ and
  $V_j = [v_{[j-m+1]}, v_{[j-m+1]+1}, \cdots, v_j] $ in this paper with the
  requirement that 
  \eq{eq:Jpk} J(x_j) p_j \approx v_j .
  \en These `secant' conditions establish a correspondence between the range of
  $P_j$ and the range of $V_j$ and are at the core of any multi-secant method.

  \subsubsection{Interpretations of AA}
  %%%as a projected fixed-point iteration}

  Anderson's original algorithm can be interpreted from a number of different
  perspectives. The author acknowledged being inspired the work on extrapolation methods
  similar to those discussed in Section~\ref{sec:extrapol}. However, the method he
  introduced does not fit the definition of an extrapolation technique according to 
  the terminology we use in this paper.

  Equation~\nref{eq:AA1c} resembles a simple Richardson iteration, see
  \nref{eq:Richardson0}, applied to the intermediate iterate $\bar x_j$ and its
  (linear) residual $\bar f_j$.  Therefore, the first question we could address
  is what do $\bar x_j $ and $\bar f_j$ represent?  AA starts by computing an
  intermediate approximation solution which is denoted by $\bar x_j$. The
  approximation $\bar x_j$ is just a member of the affine space
  $x_j + \Span \{ \XX_j \}$.  (In the equivalent initial presentation of AA, it
  was of the form given by Eq. \nref{eq:xbar}.)  We would normally write any
  vector on this space as $x_j + \XX_j \gamma$ where $\gamma$ is an arbitrary
  vector in $\RR^{m_j}$, where $m_j = \min \{j,m\}$ is the number of columns of
  $\XX_j$.
  Anderson changed the sign of the coefficient $\gamma$
  so instead he considered vectors of the form
  \eq{eq:xofgam} x(\gamma) = x_j -
  \XX_j \gamma .  \en
  Ideally, we would have liked to compute a coefficient
  vector $\gamma$ that minimizes the norm $\| f(x(\gamma))\|_2$. This is
  computable by a line-search but at a high cost, so instead, Anderson exploits
  the linear model around $x_j$: \eq{eq:LinModelAA} f(x(\gamma)) \equiv f(x_j -
  \XX_j \gamma ) \approx f(x_j) - \FF_j \gamma = f_j - \FF_j \gamma .  \en The
  above expression is therefore the \emph{linear residual} at $x_j$ for vectors
  of the form \nref{eq:xofgam}.  The optimal $\gamma$ which we denoted by
  $\gamma\up{j}$ is precisely what is computed in \nref{eq:gammak}.  Therefore,
  $\bar x_j$ is just \emph{the vector of the form \nref{eq:xofgam} that achieves
    the smallest linear residual.}

An important observation here is that we could have considered the new sequence
$\{ \bar x_j \}_{j=0, 1, \cdots}$ by itself. Remarkably, each vector $\bar x_j$
is just a linear combination of the previous $x_i$'s so it represents an
\emph{extrapolated sequence} of the form \nref{eq:extrapolGen}.
 In fact the vector $\bar x_j$ can be seen to be identical with the vector produced
 by the RRE algorithm seen in Section~\ref{sec:RRE}.
%% Anextrapolation method of this type can very well be defined and it would be
%%mathematically equivalent to RRE applied to the AA sequence.
 Computing this extrapolated sequence by itself, \emph{without mixing it with
   the original iterates} will be identical with applying the RRE procedure to
 the $x_i$'s.  It gets us closer to the solution by combining previous iterates
 but we can do better.  Since $\bar x_j$ is likely to be a better approximation
 than $x_j$ a reasonable option would be to define the next iterate as a fixed-point
 iteration from it: \eq{eq:andersIdeal} x_{j+1}\up{+} = \bar x_j + \beta
 f(\bar x_j).  \en This, however, would require an additional function
 evaluation. Therefore, AA replaces $f(\bar x_j) = f(x_j - \XX_j \gamma\up{j})$
 by its linear approximation given in \nref{eq:LinModelAA} which is just
 $\bar f_j$, resulting in the Anderson update given by \nref{eq:AA1c}.  Thus,
 Anderson Acceleration can be viewed as a process that intermingles one step of
 RRE applied to previous iterates with one linearized gradient descent of the
 form: \nref{eq:AA1c}.  An alternative would be to restart - say every $k$ RRE
 steps - and reset the next iterate to be the linearized update \nref{eq:AA1c}.
 We tested a technique of this type in the experiment shown in Section~\ref{sec:numer1}.
 
%%%%When $k=1$ we would recover AA.
%%=========

    \subsubsection{Linear case: Links with Krylov subspace methods}
    \label{sec:LnkKry}
In this section we consider the case when the problem is linear
and set $f(x) = b - Ax$ (note the sign difference with earlier notation).
We assume that $A$ is nonsingular. 
In this situation we have \eq{eq:APj} \FF_j = -A \XX_j.
\en
The following lemma shows that 
that the matrix $U_j$ resulting from Algorithm \ref{alg:TGS} is
a  basis of the Krylov
subspace $\K_j(A,f_0)$ and that under mild conditions,
$Q_j, U_j$ satisfy the same relation as
$\FF_j, \XX_j$ in \eqref{eq:APj}  for AA-TGS($\infty)$.

\begin{lemma}
  Assume $A$ is invertible and $f(x)=b-Ax$. If Algorithm \ref{alg:TGS}
 applied to solve $f(x) = 0 $  with
  $m=\infty$ does not break down at step $j$, then the system
  $U_j$ forms a basis of the Krylov subspace $\mathcal{K}_j(A,f_0)$. In
  addition, the orthonormal system $Q_j$ built by Algorithm \ref{alg:TGS}
  satisfies $Q_j= -AU_j$.
\label{lemma:QAU}
\end{lemma}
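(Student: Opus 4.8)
The plan rests on two observations, the first immediate in the linear setting. Writing $f(x)=b-Ax$, every forward difference satisfies $\Delta f_i = f_{i+1}-f_i = -A(x_{i+1}-x_i) = -A\,\Delta x_i$, which is the vector-level form of \eqref{eq:APj}. I would first establish $Q_j=-AU_j$, which turns out to require almost no work, and then use it to identify $\Span\{U_j\}$ with the Krylov subspace, which is where the induction lives.

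For the identity $Q_j=-AU_j$, I would track the two working vectors $q$ and $u$ of Algorithm~\ref{alg:TGS} and show by induction on $j$ that $q=-Au$ holds at every point of the inner loop. At the start of step $j$ we have $q=\Delta f_{j-1}=-A\,\Delta x_{j-1}=-Au$, so the relation holds before the loop. Each Gram--Schmidt update replaces the pair by $q-s_{ij}q_i$ and $u-s_{ij}u_i$; assuming inductively that $q_i=-Au_i$ for the already-computed columns, linearity gives $q-s_{ij}q_i=-A(u-s_{ij}u_i)$, so $q=-Au$ is preserved, and the common normalization by $s_{jj}$ preserves it as well. Hence $q_j=-Au_j$, proving $Q_j=-AU_j$. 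Because $Q_j$ is orthonormal whenever no breakdown occurs and $A$ is invertible, $U_jc=0$ forces $Q_jc=-AU_jc=0$ and then $c=0$, so the columns of $U_j$ are linearly independent.

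To show $\Span\{U_j\}=\K_j(A,f_0)$ I would argue by strong induction on $j$. The base case is clear, since the first column is a nonzero multiple of $\Delta x_0=\beta_0 f_0$, giving $\Span\{u_1\}=\K_1(A,f_0)$. For the inductive step I would write $\Delta x_k = x_{k+1}-x_k$ using the update \eqref{eq:AA2N} and substitute $Q_k=-AU_k$ to obtain $\Delta x_k = -U_k\eta\up{k} + \beta_k A U_k\eta\up{k} + \beta_k f_k$. The first term lies in $\Span\{U_k\}=\K_k$ and the second in $A\K_k\subseteq\K_{k+1}$, so the only point needing attention is $f_k\in\K_{k+1}$. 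This follows from $f_k = f_0 - A(x_k-x_0) = f_0 - A\sum_{i=0}^{k-1}\Delta x_i$ together with the inductive fact that each $\Delta x_i$, being a combination of $u_1,\dots,u_{i+1}$ by \eqref{eq:recTGS2}, lies in $\K_{i+1}\subseteq\K_k$, whence $A\,\Delta x_i\in\K_{k+1}$ and $f_0\in\K_1\subseteq\K_{k+1}$. Consequently $\Delta x_k\in\K_{k+1}$, and since $u_{k+1}$ is $\Delta x_k$ orthogonalized against the $u_i\in\K_k$, we get $u_{k+1}\in\K_{k+1}$, i.e.\ $\Span\{U_{k+1}\}\subseteq\K_{k+1}$. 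Equality then follows from a dimension count: the $k+1$ columns of $U_{k+1}$ are linearly independent, so $\dim\K_{k+1}\ge k+1$, which forces $\dim\K_{k+1}=k+1$ and $\Span\{U_{k+1}\}=\K_{k+1}$.

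The main obstacle is the membership $f_k\in\K_{k+1}$: unlike the purely algebraic identity $Q_j=-AU_j$, it couples the orthogonalization recurrence to the iterate-update formula and forces the induction to carry along the auxiliary fact that every computed increment $\Delta x_i$ already sits in $\K_{i+1}$. Once that is in place, the orthonormality of $Q_j$ supplies exactly the independence needed to upgrade the subspace inclusion to an equality, and both assertions of the lemma follow at once.
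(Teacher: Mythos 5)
Your proof is correct and follows essentially the same route as the paper's: first establishing $Q_j=-AU_j$ by induction through the Gram--Schmidt recurrence, then proving $\Span\{U_j\}=\K_j(A,f_0)$ by induction using the update formula \eqref{eq:AA2N} and the telescoping identity $f_k = f_0 - A\sum_{i=0}^{k-1}\Delta x_i$ to place $f_k$ in $\K_{k+1}$. The only differences are cosmetic — you run the first induction on the working vectors inside the loop and finish with an explicit dimension count, where the paper argues on the final columns and cites full column rank of $U_j=-A^{-1}Q_j$.
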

\begin{proof}
  We first prove $Q_j=-AU_j$ by induction. When $j=1$, we have
  $q_1=(f_1-f_0)/s_{11}=-Au_1$. Assume $Q_{j-1}=-AU_{j-1}$. Then we have
    \begin{align*}
        s_{jj}q_j &= (f_j-f_{j-1})-\sum_{i=1}^{j-1}s_{ij}q_{i} =-A(x_j-x_{j-1})-\sum_{i=1}^{j-1}s_{ij}(-Au_{i})\\
        & = -A[(x_j-x_{j-1})-\sum_{i=1}^{j-1}s_{ij}u_{i}]\\ 
        & = s_{jj}(-Au_j).
    \end{align*}
    Thus, since $s_{jj} \ne 0$ we get $q_j=-Au_j$ and therefore $Q_j=-AU_j$, completing the
    induction proof.
    
    Next, we prove by induction  that $U_j$ forms a basis of $\mathcal{K}_{j}(A,f_0)$.
    It is more convenient to prove by induction the property that 
    for each $i\le j$, $U_i$ forms a basis of $\mathcal{K}_{i}(A,f_0)$.
    The result is true for $j=1$ since we have
    $u_1 = (x_1 - x_0)/s_{11} = \beta_0 f_0 /s_{11}$.  
    Now let us assume the property is true for $j-1$, i.e., that for each $i=1,2,\cdots,j-1$,
    $U_{i}$ is a basis of the Krylov subspace $\mathcal{K}_{i}(A,f_0)$. Then we have
\begin{align}
  s_{jj}u_j
  &= (x_j-x_{j-1})-\sum_{i=1}^{j-1}s_{ij}u_i \label{eq:sjjuj} \\                  
  &= -U_{j-1}{\theta}_{j-1} +\beta_{j-1}(f_{j-1}-Q_{j-1}{\theta}_{j-1})-\sum_{i=1}^{j-1}s_{ij}u_i \nonumber\\
  & = -U_{j-1}{\theta}_{j-1} + \beta_{j-1}f_{j-1} -\beta_{j-1}Q_{j-1}{\theta}_{j-1}- \sum_{i=1}^{j-1}s_{ij}u_i\nonumber \\
%%%  & = \beta_{j-1}(b-Ax_{j-1})-U_{j-1}{\theta}_{j-1}-\beta_{j-1}(-AU_{j-1}){\theta}_{j-1}-\sum_{i=1}^{j-1}s_{ij}u_i\nonumber\\
  & =\beta_{j-1}f_{j-1} -U_{j-1}{\theta}_{j-1}+\beta_{j-1}AU_{j-1}{\theta}_{j-1}-\sum_{i=1}^{j-1}s_{ij}u_i. \nonumber
\end{align}
  The induction hypothesis  shows that 
  $-U_{j-1}{\theta}_{j-1}+\beta_{j-1}AU_{j-1}\theta_{j-1}-\sum_{i=1}^{j-1}s_{ij}u_i\in\mathcal{K}_{j}(A,f_0)$.
  It remains to show that  $f_{j-1} = b-Ax_{j-1}\in \mathcal{K}_{j}(A,f_0)$. For this,  expand 
  $b-Ax_{j-1}$ as
  \begin{align*}
    b-Ax_{j-1} &= b- Ax_{j-1}+Ax_{j-2}-Ax_{j-2}+\ldots-Ax_{1}+Ax_{0}-Ax_{0} \\
               &= \sum_{i=1}^{j-1}-A(x_{i}-x_{i-1})+f_0 .
  \end{align*}
    From the relation \eqref{eq:sjjuj} applied with $j$ replaced by $i$, we see that $x_i - x_{i-1}$ is a linear
    combination of $u_1, u_2, \cdots, u_i$, i.e., it is a member $\K_i$ by the induction hypothesis.
    Therefore $ -A (x_i - x_{i-1}) \in \K_{i+1}$ - but since $i \le j-1$ this
    vector belongs to $ \K_{j}$.  The remaining term $f_0$ is clearly in $\K_j$. Because $U_{j}=-A^{-1}Q_j$ has full column rank and $u_i\in\mathcal{K}_{j}(A,f_0)$ for $i=1,\ldots,j$, $U_j$ forms a basis of $\mathcal{K}_{j}(A,f_0)$. This completes the induction proof.
  \end{proof}

From \eqref{eq:AA2N}, we see that in the linear case under consideration
the vector $\bar f_j$ is the residual for $\bar x_j$:
\begin{align}
  \bar f_j  = f_j- Q_j {\theta}_{j} &= (b-A x_j) - Q_j {\theta}_{j} \nonumber  \\
  & = (b-A x_j) + A U_j {\theta}_{j}  = b - A (x_j - U_j {\theta}_{j}) = b - A \bar x_j . \label{eq:fbarj}
\end{align}
The next theorem shows that $\bar x_j$ minimizes $\| b - A x\|_2$ over the affine space $x_0+\mathcal{K}_j{(A,f_0)}$.

\begin{theorem}
  \label{lem:opt1}
  The vector  $\bar x_{j}$ generated at the $j$-th step
  of AA-TGS($\infty$) minimizes the residual norm $\| b - A x\|_2$
  over all vectors $x$ in the affine space $x_0 + \mathcal{K}_j(A,f_0)$.
  It also minimizes the same residual norm over the subspace $x_k + \mathcal{K}_j(A,f_0)$ for any $k$ such that $0 \le k \le j$.
  \end{theorem}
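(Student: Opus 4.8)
The plan is to reduce both assertions to a single minimization over $x_0 + \mathcal{K}_j(A,f_0)$ and then exploit the orthonormality of $Q_j$ together with the structural relations already established in Lemma~\ref{lemma:QAU}. First I would dispose of the dependence on $k$ by showing that all the affine spaces in the statement coincide. From the recurrence \eqref{eq:sjjuj} we have $x_i - x_{i-1} = s_{ii}u_i + \sum_{\ell<i} s_{\ell i}u_\ell \in \mathcal{K}_i(A,f_0)$, so for any $0 \le k \le j$ it follows that $x_k - x_0 = \sum_{i=1}^{k}(x_i - x_{i-1}) \in \mathcal{K}_k(A,f_0) \subseteq \mathcal{K}_j(A,f_0)$. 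Hence $x_k + \mathcal{K}_j(A,f_0) = x_0 + \mathcal{K}_j(A,f_0)$ for every such $k$, and the second claim reduces to the first. The same computation shows that $\bar x_j = x_j - U_j\eta\up{j}$ itself lies in $x_0 + \mathcal{K}_j(A,f_0)$, since $x_j - x_0 \in \mathcal{K}_j(A,f_0)$ and $U_j\eta\up{j} \in \Span\{U_j\} = \mathcal{K}_j(A,f_0)$ by Lemma~\ref{lemma:QAU}.

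Next I would parametrize the affine space. Because $U_j$ is a basis of $\mathcal{K}_j(A,f_0)$ and $\bar x_j$ belongs to the space, every $x \in x_0 + \mathcal{K}_j(A,f_0)$ can be written as $x = \bar x_j + U_j z$ for some $z \in \RR^j$. Using $Q_j = -AU_j$ from Lemma~\ref{lemma:QAU} and the identity $\bar f_j = b - A\bar x_j$ from \eqref{eq:fbarj}, the residual takes the convenient form
\[
  b - Ax = (b - A\bar x_j) - A U_j z = \bar f_j + Q_j z .
\]

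I would then close the argument by orthogonality. Since $Q_j$ has orthonormal columns and $\eta\up{j} = Q_j^T f_j$, one computes $Q_j^T \bar f_j = Q_j^T f_j - Q_j^T Q_j \eta\up{j} = \eta\up{j} - \eta\up{j} = 0$, so $\bar f_j \perp \Span\{Q_j\}$. As $Q_j z \in \Span\{Q_j\}$, the Pythagorean identity gives $\|b - Ax\|_2^2 = \|\bar f_j\|_2^2 + \|Q_j z\|_2^2 \ge \|\bar f_j\|_2^2$, with equality precisely when $Q_j z = 0$, i.e.\ when $z = 0$ because $Q_j$ has full column rank. Thus $\bar x_j$ is the unique minimizer over $x_0 + \mathcal{K}_j(A,f_0)$, and by the first paragraph over every $x_k + \mathcal{K}_j(A,f_0)$ with $0 \le k \le j$ as well.

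I do not anticipate a genuine difficulty here, as Lemma~\ref{lemma:QAU} supplies the essential facts $Q_j = -AU_j$ and $\Span\{U_j\} = \mathcal{K}_j(A,f_0)$. The only steps demanding care are the bookkeeping that identifies each $x_k + \mathcal{K}_j(A,f_0)$ with $x_0 + \mathcal{K}_j(A,f_0)$ via the membership $x_i - x_{i-1} \in \mathcal{K}_i(A,f_0)$, and the verification that $\bar f_j$ is orthogonal to the range of $Q_j$; it is precisely this orthogonality that upgrades the componentwise least-squares characterization of $\eta\up{j}$ into the claimed global residual minimality of $\bar x_j$.
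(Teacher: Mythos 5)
Your proof is correct and follows essentially the same route as the paper's: both rest on Lemma~\ref{lemma:QAU} ($Q_j=-AU_j$, $U_j$ a basis of $\mathcal{K}_j(A,f_0)$), both identify the affine spaces $x_k+\mathcal{K}_j(A,f_0)$ via telescoping the differences $x_i-x_{i-1}\in\Span\{U_i\}$ (the paper phrases this as $\XX_je=U_jS_je$), and both obtain minimality from the orthonormality of $Q_j$. The only differences are cosmetic: you center the parametrization at $\bar x_j$ and argue by an explicit Pythagorean identity (which also gives uniqueness), whereas the paper centers at $x_j$ and invokes the standard least-squares solution $y=Q_j^Tf_j$, establishing the space identification afterward rather than first.
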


  \begin{proof}
%%[counter-intuitive issue with signs. $x(new) = x - U y $ instead of
%%    $x(new) = x + U y $.]
    Consider a  vector of the form $x = x_j - \delta $ where $\delta = U_j y$
    is an arbitrary member of $\mathcal{K}_j(A,f_0)$. We have
    \eq{eq:prf1}
      b - A x  = b - A (x_j - U_j y) = f_j + A U_j y = f_j - Q_j y .
      \en
      The minimal norm $\|b-Ax\|_2$ is reached when $y = Q_j^{\top} f_j $
      and the corresponding optimal $x$ is $\bar x_j$. Therefore, $\bar x_j$
      is the vector $x $ of the affine space $x_j + \mathcal{K}_j(A,f_0)$  with the smallest
      residual norm. We now write $x$ as:
\begin{align} 
x &= x_j - U_j y \nonumber   \\
  &= x_0 + (x_1-x_0) + (x_2-x_1) + 
    (x_3-x_2) + \cdots (x_{i+1} - x_i) + \ \nonumber \\
  & \qquad \cdots + (x_j - x_{j-1}) - U_j y \label{eq:lem2.2prf1}\\
  &= x_0 + \Delta x_0 + \Delta x_1 +  \cdots +\Delta x_{j-1} - U_j y.  \label{eq:lem2.2prf2}
\end{align}
We now exploit the relation obtained from the QR factorization of Algorithm \ref{alg:TGS}, namely $\XX_j = U_j S_j $ in \eqref{eq:QRrel}:
If $e$ is the vector of all ones, then
$\Delta x_0 + \Delta x_1 + \cdots + \Delta x_{j-1}  = \XX_j e = U_j S_j e$.
Define $t_j \equiv S_j e$. Then, from \eqref{eq:lem2.2prf2} we obtain 
\eq{eq:xepr00}
x = x_j - \delta  = x_0 - U_j [y - t_j] .
\en 
Hence, the set of all vectors of the form $ x_j - \delta = x_j - U_j y$
is the same as
the set of all vectors of the form $x_0 - \delta'$ where $\delta' \ \in \ \mathcal{K}_j(A,f_0)$.
As a result, $\bar x_j$ also minimizes $b - A x$ over all vectors in the affine
space
$x_0 +  \ \mathcal{K}_j(A,f_0)$.

The proof can be easily repeated if we replace $x_0$ by $x_k$ for 
any $k$ between $0$ and $j$.
The expansion (\ref{eq:lem2.2prf1} --\ref{eq:lem2.2prf2}) 
becomes 
\begin{align} 
%%x &= x_k - U_k y \nonumber   \\
  x_k - U_j y &= x_k + (x_{k+1}-x_k) + (x_{k+2}-x_{k+1}) + \nonumber \\
  & \qquad \qquad \cdots (x_{i+1} - x_i) +\cdots (x_j - x_{j-1}) - U_j y \label{eq:lem2.2prf3}\\
  &= x_k + \Delta x_k + \Delta x_{k+1} + \cdots + \Delta x_{j-1} - U_j y. 
\label{eq:lem2.2prf4}
\end{align} 
The rest of the proof is similar and straightforward.
\end{proof}

Theorem \ref{lem:opt1} shows that $\bar x_j$ is the $j$-th iterate of the GMRES algorithm for solving
$Ax=b$ with the initial guess $x_0$ and that $\bar f_j $ is the corresponding residual. The value of $\bar x_j$ is independent of the choice of $\beta_i$ for $i\leq j$. Now consider the residual $f_{j+1}$ of AA-TGS($\infty$) at step $j+1$. From the relations
$x_{j+1} = \bar x_j + \beta_j \bar f_j$ and \eqref{eq:fbarj} we get:
\eq{eq:fjp1}
f_{j+1} = b - A[\bar x_j + \beta_j \bar f_j]  =b -  A \bar x_j - \beta_j A \bar f_j
 = \bar f_j - \beta_{j} A \bar f_j = (I-\beta_j A) \bar f_j.
\en
This implies that the vector $f_{j+1}$ is the residual for $x_{j+1}$ obtained from $x_{j+1} = \bar x_j + \beta_j \bar f_j$ - which is a simple Richardson iteration
starting from the iterate $\bar x_j$. Therefore,  $x_{j+1}$ 
in Line~15 of Algorithm \ref{alg:TGS}  
is nothing but a Richardson iteration step
from this GMRES iterate. This is stated in the following proposition.

\begin{proposition}
  \label{prop:opt1}
  The residual $f_{j+1}$  of the iterate   $x_{j+1}$ generated at the $j$-th step
  of AA-TGS($\infty$) is equal to  $(I -\beta_j A) \bar f_j$ where $\bar f_j = b - A \bar x_j$ 
  minimizes the residual norm $\| b - A x\|_2$  over all vectors $x$ in the affine space $x_0 + \mathcal{K}_j(A,f_0)$.
  In other words, the $(j+1)$-st iterate of AA-TGS($\infty$) can be obtained by performing one step of a Richardson iteration applied to the
  $j$-th GMRES iterate.
\end{proposition}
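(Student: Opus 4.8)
The plan is to obtain the statement as an immediate consequence of the update rule defining AA-TGS($\infty$), combined with the residual identity \eqref{eq:fbarj} and the optimality result of Theorem~\ref{lem:opt1}. Because the supporting algebra has in fact already been carried out in \eqref{eq:fjp1}, the proposition is essentially a corollary, and the work consists in assembling the pieces and naming the prerequisites precisely.

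First I would start from Line~15 of Algorithm~\ref{alg:TGS}, i.e.\ the last relation in \eqref{eq:AA2N}, which sets $x_{j+1} = \bar x_j + \beta_j \bar f_j$. Forming the residual and using \eqref{eq:fbarj}, which in the linear case identifies $\bar f_j$ with the exact residual $b - A\bar x_j$, yields
\[
f_{j+1} = b - A x_{j+1} = (b - A\bar x_j) - \beta_j A\bar f_j = \bar f_j - \beta_j A\bar f_j = (I - \beta_j A)\bar f_j,
\]
which is precisely the displayed derivation \eqref{eq:fjp1} and establishes the residual formula.

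Next I would invoke Theorem~\ref{lem:opt1}: it states that $\bar x_j$ minimizes $\|b - Ax\|_2$ over the affine space $x_0 + \K_j(A,f_0)$. By the minimal-residual characterization of Proposition~\ref{prop:opt2} (the case $\calL = A\K$), this is exactly the defining property of the $j$-th GMRES iterate started from $x_0$, so $\bar x_j$ is that iterate and $\bar f_j = b - A\bar x_j$ is its residual. Finally, rewriting the update as $x_{j+1} = \bar x_j + \beta_j(b - A\bar x_j)$ exhibits it as one step of the Richardson iteration \eqref{eq:Richardson0}, with step size $\gamma = \beta_j$, applied to $\bar x_j$; this gives the ``in other words'' reformulation.

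The proposition carries no real difficulty of its own: the substantive content lives in the two results it cites. Both ultimately rest on Lemma~\ref{lemma:QAU}, whose identities $Q_j = -AU_j$ and $\Span\{U_j\} = \K_j(A,f_0)$ are what make $\bar f_j$ a genuine residual and $\bar x_j$ a genuine GMRES iterate rather than merely an element of an affine subspace. Thus the main obstacle has already been discharged upstream; here I would only need to verify that the step-size convention matches \eqref{eq:Richardson0} and ensure the appeals to Theorem~\ref{lem:opt1} and \eqref{eq:fbarj} are made explicit.
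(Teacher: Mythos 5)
Your proposal is correct and follows essentially the same route as the paper: the paper also derives the residual formula by the computation in \eqref{eq:fjp1} (using $x_{j+1}=\bar x_j+\beta_j\bar f_j$ together with \eqref{eq:fbarj}), and then identifies $\bar x_j$ as the $j$-th GMRES iterate via Theorem~\ref{lem:opt1}, reading the update as a Richardson step. Your added remark that the GMRES identification rests on Proposition~\ref{prop:opt2} and ultimately on Lemma~\ref{lemma:QAU} is merely an explicit spelling-out of dependencies the paper leaves implicit, not a different argument.
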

A similar result has also been proved for the standard AA by  \cite{WalkerNi2011}
under slightly
different assumptions, see Section~\ref{sec:WalkerNi}.

Convergence in the linear case can be therefore analyzed by relating the residual of full AA-TGS with that of
GMRES. The following corollary of the above proposition shows a simple but useful inequality.
\begin{corollary}\label{cor:1}
  If AA-TGS($\infty$) is used to solve the  system \nref{eq:Ax=b}, then the residual norm of the
  iterate $x_{j+1}\up{AA-TGS} $ satisfies the inequality:
\eq{eq:aatgs-mr}
\Vert b -  A x\up{AA-TGS}_{j+1} \Vert_2   \le \Vert (I-\beta A)\|_2  \ \| b - A x_{j}\up{GMRES}\|_2,
\en
where $ x_{j}\up{GMRES}\|_2$ is the iterate obtained by $j$ steps of full GMRES starting with the same initial guess
$x_0$.
\end{corollary}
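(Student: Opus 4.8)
The plan is to read the inequality off directly from Proposition~\ref{prop:opt1}, which has already done all of the conceptual work; what remains is a one-line norm estimate. First I would recall the exact residual identity established there, namely that the residual of the $(j+1)$-st iterate of AA-TGS($\infty$) satisfies
\[
b - A x\up{AA-TGS}_{j+1} = f_{j+1} = (I - \beta_j A)\, \bar f_j ,
\]
where $\bar f_j = b - A \bar x_j$. Second, I would invoke Theorem~\ref{lem:opt1}, which identifies $\bar x_j$ with the $j$-th iterate of full GMRES started from $x_0$; hence $\bar f_j = b - A x_j\up{GMRES}$ is precisely the $j$-th GMRES residual.

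With these two facts in hand, the proof reduces to taking Euclidean norms and applying submultiplicativity of the spectral norm, i.e. the bound $\|M v\|_2 \le \|M\|_2\,\|v\|_2$ with $M = I - \beta_j A$ and $v = \bar f_j$. This gives
\[
\|b - A x\up{AA-TGS}_{j+1}\|_2 = \|(I - \beta_j A)\bar f_j\|_2 \le \|I - \beta_j A\|_2 \, \|b - A x_j\up{GMRES}\|_2 ,
\]
and setting $\beta_j = \beta$ constant (as the statement implicitly assumes) yields the claimed estimate exactly.

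I anticipate essentially no obstacle in this argument, since it is a corollary in the strict sense: all substance lives in Proposition~\ref{prop:opt1} and Theorem~\ref{lem:opt1}. The only points requiring a word of care are bookkeeping rather than mathematical. One should note that the single parameter $\beta$ appearing on the right-hand side of the statement corresponds to the per-step relaxation parameter $\beta_j$ of Algorithm~\ref{alg:TGS}, so the bound as written presupposes a fixed $\beta$ across steps; with a step-dependent $\beta_j$ the factor would instead be $\|I - \beta_j A\|_2$. One should also record that the only hypothesis inherited from the preceding results is that Algorithm~\ref{alg:TGS} does not break down at step $j$, which is exactly what is needed for Theorem~\ref{lem:opt1} to apply and hence for the GMRES identification of $\bar x_j$ to hold.
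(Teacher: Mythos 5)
Your proposal is correct and follows essentially the same route as the paper: cite the identity $f_{j+1} = (I-\beta A)\bar f_j$ from Proposition~\ref{prop:opt1}, identify $\bar f_j$ with the $j$-th GMRES residual (via Theorem~\ref{lem:opt1}, which Proposition~\ref{prop:opt1} already encapsulates), and finish by submultiplicativity of the $2$-norm. Your added remarks on the constant-$\beta$ convention and the no-breakdown hypothesis are sound bookkeeping and consistent with the paper's implicit assumptions.
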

\begin{proof}
  According to Proposition \ref{prop:opt1}: $r_{j+1} = - f(x_{j+1}) = - (I-\beta A) \bar f_j = (I-\beta A)  r_j$
  where $r_j$ is the residual obtained from $j$ steps of GMRES starting with the same initial guess $x_0$.
  Therefore:
\begin{align}
  \Vert b -  A x\up{AA-TGS}_{j+1} \Vert_2 & = \Vert (I-\beta A)(b  - A x\up{GMRES}_j) \Vert_2
                                            = \Vert (I-\beta A) r_{j}\up{GMRES}\Vert_2 \nonumber \\
  & \le \Vert (I-\beta A)\|_2  \ \| r_{j}\up{GMRES}\|_2 .  \label{eq:tgpf1}  
\end{align}
\end{proof}
Thus, essentially all the convergence analysis of GMRES can be adapted to AA-TGS
when it is applied to linear systems.  The next section examines the special
case of linear symmetric systems.

\subsubsection{The linear symmetric case}\label{sec:st_aatgs}
A simple experiment will reveal a remarkable observation for the linear case
when the matrix $A$ is symmetric.  Indeed, the orthogonalization process (Lines
6-10 of Algorithm \ref{alg:TGS}) simplifies in this case in the sense that $S_j$
consists of only 3 non-zero diagonals in the upper triangular part when $A$ is
symmetric. In other words, when $A$ is symmetric  we only need to save
$q_{j-2}, q_{j-1}$ and $u_{j-2}, u_{j-1}$ in order to generate $q_j$ and $u_j$
in the full-depth case, i.e., when $m = \infty$. This is similar to the simplification obtained by
a Krylov method like FOM or GMRES when the matrix is symmetric.
We  first examine the components of the vector $Q^{\top}_jf_j$ in Line 14
of Algorithm \ref{alg:TGS}.

\begin{lemma}
  When $f(x)=b-Ax$ where $A$ is a real non-singular symmetric matrix
  then the entries of the vector ${\theta}_j=Q^T_jf_{j}$ in Algorithm \ref{alg:TGS} are all zeros except the last two.
    \label{lemma:twoentries}
\end{lemma}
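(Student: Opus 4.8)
The plan is to prove that the $i$-th entry $q_i^T f_j$ of $\theta_j = Q_j^T f_j$ vanishes whenever $i \le j-2$, so that only the components indexed $j-1$ and $j$ can be nonzero. Throughout I work in the full-depth case $m=\infty$ assumed in this section, and I rely on three facts already in hand: the identity $Q_j = -A U_j$ together with the fact that $U_j$ is a basis of $\K_j(A,f_0)$ (Lemma~\ref{lemma:QAU}); the minimal-residual characterization of $\bar x_{j-1}$ (Theorem~\ref{lem:opt1}), which makes the linear residual $\bar f_{j-1}=b-A\bar x_{j-1}$ orthogonal to $A\K_{j-1}(A,f_0)=\Span\{Q_{j-1}\}$; and the Richardson relation $f_j=(I-\beta_{j-1}A)\bar f_{j-1}$ obtained by shifting the index in Proposition~\ref{prop:opt1}. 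Symmetry of $A$ will enter at a single, decisive point.

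First I would expand, for any $i\le j-1$,
\[
q_i^T f_j = q_i^T (I-\beta_{j-1}A)\bar f_{j-1}
          = q_i^T \bar f_{j-1} - \beta_{j-1}\, q_i^T A\,\bar f_{j-1}.
\]
The first term is zero for every $i\le j-1$, because $q_i\in\Span\{Q_{j-1}\}$ while $\bar f_{j-1}\perp\Span\{Q_{j-1}\}$ by the minimal-residual property. For the second term I would use the symmetry $A^T=A$ to transfer $A$ onto $q_i$, writing $q_i^T A\,\bar f_{j-1} = (A q_i)^T\bar f_{j-1}$. This transpose step is the only place where symmetry is used, and it is exactly what produces the banded (three-diagonal) structure of $S_j$ that is special to the symmetric case.

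The crux is then a Krylov-subspace containment. Since $q_i=-A u_i$ with $u_i\in\K_i(A,f_0)$ by Lemma~\ref{lemma:QAU}, we have $A q_i = -A^2 u_i$, and because $u_i\in\Span\{f_0,Af_0,\dots,A^{i-1}f_0\}$ it follows that
\[
A q_i = -A^2 u_i \in \Span\{A^2 f_0,\dots,A^{i+1}f_0\}\subseteq A\K_{i+1}(A,f_0)=\Span\{Q_{i+1}\}.
\]
Hence for $i\le j-2$ we have $i+1\le j-1$, so $A q_i\in\Span\{Q_{i+1}\}\subseteq\Span\{Q_{j-1}\}$, which is orthogonal to $\bar f_{j-1}$. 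Therefore $(A q_i)^T\bar f_{j-1}=0$, and combining this with the vanishing of the first term gives $q_i^T f_j=0$ for all $i\le j-2$. Only the components $q_{j-1}^T f_j$ and $q_j^T f_j$ remain, i.e. the last two entries of $\theta_j$, which establishes the lemma.

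I expect the only delicate part to be the index bookkeeping in the containment $A q_i\in\Span\{Q_{i+1}\}$ and the verification that the threshold $i\le j-2$ is precisely what is needed to land inside $\Span\{Q_{j-1}\}$; everything else is a short computation resting on the already-proved structural results. It is worth emphasizing in the write-up that symmetry is confined to the single step $q_i^T A\,\bar f_{j-1}=(Aq_i)^T\bar f_{j-1}$, since this is what reduces the recurrence to a three-term one and thereby justifies storing only $q_{j-1},q_{j-2}$ and $u_{j-1},u_{j-2}$.
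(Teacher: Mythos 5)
Your proof is correct and follows essentially the same route as the paper's: split $(f_j,q_i)$ via $f_j=(I-\beta_{j-1}A)\bar f_{j-1}$, kill the first term by the orthogonality $\bar f_{j-1}\perp\Span\{Q_{j-1}\}$, use symmetry exactly once to move $A$ onto $q_i$, and conclude from the containment $Aq_i\in\Span\{Q_{i+1}\}\subseteq\Span\{Q_{j-1}\}$ for $i\le j-2$. The only cosmetic differences are that the paper gets the orthogonality directly from $\bar f_{j-1}=(I-Q_{j-1}Q_{j-1}^T)f_{j-1}$ rather than via the minimal-residual theorem, and it obtains $Aq_i\in\Span\{Q_{i+1}\}$ by writing $q_i=U_{i+1}y$ and using $AU_{i+1}=-Q_{i+1}$ instead of your monomial bookkeeping.
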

\begin{proof}
 Let $i\leq j-1$. From \eqref{eq:fjp1}, we have 
    \[
    (f_{j},q_i) = (\bar{f}_{j-1}-\beta_{j-1}A\bar{f}_{j-1},q_i) = (\bar{f}_{j-1},q_i)-\beta_{j-1}(A\bar{f}_{j-1},q_i).
    \]
    The first term on the right-hand side vanishes because:
    \[  (\bar f_{j-1},q_i) = (f_{j-1}-Q_{j-1}{\theta}_{j-1},q_i)=((I-Q_{j-1}Q_{j-1}^T)f_{j-1},q_i)=0.
    \]
    For  the second term we write  
    $
    (A\bar{f}_{j-1},q_i)=(\bar{f}_{j-1},Aq_i)
    $
and observe that since $u_i\in\mathcal{K}_{i}(A,f_0)$,
then $q_i=-Au_i$ belongs to the Krylov subspace
$\mathcal{K}_{i+1}(A,f_0)$ which is the same as $\text{Span}  \{ U_{i+1} \}$
according to Lemma \ref{lemma:QAU}. Thus,
it can be written as $ q_i = - Au_i = U_{i+1} y$ for some $y$ and
hence, $A q_i = A U_{i+1} y = - Q_{i+1} y$, i.e., $Aq_i$ is in the span
of $q_1, \cdots, q_{i+1} $.
    Therefore, recalling that $\bar{f}_{j-1} \perp \text{Span}  \{Q_{j-1}\}$, we have: 
    \eq{eq:2entprf}
    (\bar{f}_{j-1},Aq_i)=0 \quad \text{for} \quad i\leq j-2.
    \en
    In the end, we obtain $(f_{j},q_i)=0$ for $i\leq j-2$.
\end{proof}
Lemma \ref{lemma:twoentries} indicates that the computation of $x_{j+1}$ in Line
15 of Algorithm \ref{alg:TGS} only depends on the two most recent
$q_{i}$'s and $u_{i}$'s.
In addition, as is shown next, the vectors 
$q_j$ and $u_j$ in Line 12 can be
computed from  $q_{j-2},q_{j-1}$ and $u_{j-2},u_{j-1}$  instead of all
previous $q_i$'s and $u_i$'s.
\begin{theorem}\label{thm:thetaj}
  When $f(x)=b-Ax$ where $A$ is a real non-singular symmetric matrix, then the upper triangular matrix $S_j$
  produced in Algorithm~\ref{alg:TGS} is banded with bandwidth $3$, i.e., we have $s_{ik}=0$ for $i<k-2$. 
\end{theorem}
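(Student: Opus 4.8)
The plan is to reduce the banded structure of $S_j$ to two applications of Lemma~\ref{lemma:twoentries}. First I would record what the entries $s_{ij}$ actually are. In the (modified) Gram--Schmidt loop of Lines~6--10, the multiplier extracted against $q_i$ is $s_{ij} = (q, q_i)$, where $q$ is the partially orthogonalized version of $\Delta f_{j-1} = f_j - f_{j-1}$. Since the previously computed vectors $q_\ell$ form an orthonormal set, subtracting the components along $q_\ell$ for $\ell < i$ does not change the inner product with $q_i$, so in exact arithmetic $s_{ij} = (\Delta f_{j-1}, q_i)$ for every $i < j$. Thus the theorem is equivalent to the claim that $(\Delta f_{j-1}, q_i) = 0$ whenever $i \le j-3$.

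Next I would split the difference, writing $s_{ij} = (f_j, q_i) - (f_{j-1}, q_i)$, and treat the two terms with Lemma~\ref{lemma:twoentries}. That lemma asserts that $(f_n, q_i) = 0$ for all $i \le n-2$. Applying it with $n = j$ gives $(f_j, q_i) = 0$ for $i \le j-2$, and applying it with $n = j-1$ gives $(f_{j-1}, q_i) = 0$ for $i \le j-3$. Hence for every $i \le j-3$ both terms vanish, so $s_{ij} = 0$, which is precisely $s_{ik} = 0$ for $i < k-2$ with $k = j$. For small $j$ there is no index $i \le j-3$ and the statement is vacuous.

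The one point that needs care -- the main obstacle -- is justifying the shifted application of Lemma~\ref{lemma:twoentries} to $f_{j-1}$ rather than re-deriving it from scratch. Its proof uses the recurrence \eqref{eq:fjp1} (namely $f_n = (I-\beta_{n-1}A)\bar f_{n-1}$), the orthogonality $\bar f_{n-1}\perp \Span\{Q_{n-1}\}$, and the fact from Lemma~\ref{lemma:QAU} that $q_i = -Au_i$ lies in $\mathcal{K}_{i+1}(A,f_0) = \Span\{U_{i+1}\}$, so that $Aq_i \in \Span\{q_1,\dots,q_{i+1}\}$. All of these relations are index-shift invariant and remain valid for any $n\le j$ in the full-window setting of Algorithm~\ref{alg:TGS}, so the lemma applies verbatim with $n=j-1$. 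I would also note explicitly that the symmetry of $A$ enters only through this step: it is symmetry that lets $(A\bar f_{n-1},q_i)=(\bar f_{n-1},Aq_i)$ be expressed via the earlier $q$'s and hence forces the inner products to collapse. The resulting three-band structure of $S_j$ is therefore the exact analogue, for the non-orthogonal paired bases $(Q_j,U_j)$, of the short recurrence enjoyed by symmetric Krylov methods such as FOM and GMRES.
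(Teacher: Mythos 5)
Your proof is correct, and it takes a genuinely different, leaner route than the paper's. Both arguments start from the same observation that, in exact arithmetic, the modified Gram--Schmidt coefficients coincide with the classical ones, $s_{ik} = (\Delta f_{k-1}, q_i)$. From there the paper does \emph{not} split the difference $\Delta f_{k-1}$; instead it uses linearity of $f$ to write $s_{i,k+1} = -(A\Delta x_k, q_i)$, expands $A\Delta x_k = -\bar f_k + f_k + \beta_k A \bar f_k$ by combining the update formula $\Delta x_k = -U_k\theta_k + \beta_k \bar f_k$ with the identity $Q_k = -AU_k$ of Lemma~\ref{lemma:QAU}, and then annihilates the three resulting inner products separately: $(\bar f_k, q_i)=0$ by orthogonality, $(f_k, q_i)=0$ for $i\le k-2$ by Lemma~\ref{lemma:twoentries}, and $(\bar f_k, Aq_i)=0$ for $i\le k-1$ by the intermediate relation \eqref{eq:2entprf} lifted from \emph{inside} the proof of that lemma. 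You instead split $(\Delta f_{j-1},q_i) = (f_j,q_i) - (f_{j-1},q_i)$ and invoke Lemma~\ref{lemma:twoentries} twice, at steps $j$ and $j-1$; since the lemma is stated and proved for a generic step of the full-window algorithm, and its proof nowhere uses the band structure of $S_j$, the shifted instantiation is legitimate and there is no circularity --- your explicit justification of this index shift is exactly the point that needed care. The trade-off: your argument is shorter and more modular, using the lemma purely as a black box, while the paper's three-term decomposition makes visible how the band structure emerges from the update $x_{k+1} = \bar x_k + \beta_k \bar f_k$ and where orthogonality, the two-entry property, and the symmetry of $A$ each enter; but as a proof of the stated theorem, both are equally rigorous and yield the same bandwidth bound.
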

\begin{proof}
  It is notationally more convenient to consider column $k+1$ of $S_j$ where $k+1 \le j$.
  Let $\Delta f_k\equiv f_{k+1}-f_{k}$, and $\Delta x_k=x_{k+1}-x_{k}$.  The inner product
  $s_{i,k+1}$ in Algorithm \ref{alg:TGS} is the same as
  $s_{i,k+1}   =(\Delta f_{k},q_i)$ that would be obtained by a classical Gram-Schmidt algorithm.
  We note that for $i\leq k$  we have $s_{i,k+1} = -(A\Delta x_{k},q_i)$.  
 Exploiting the relation: 
    \[
        \Delta x_{k} = (\bar{x}_k+\beta_k\bar{f}_k)-x_k = x_k-U_k{\theta}_{k} + \beta_k\bar{f}_k-x_k = -U_k{\theta}_{k} + \beta_k\bar{f}_k,
    \]
    we write 
    \begin{align*}
        A\Delta x_{k} &= -AU_k{\theta}_{k}+\beta_kA\bar{f}_k = Q_k{\theta}_{k}+\beta_kA\bar{f}_k\\
        &= -(f_k-Q_k{\theta}_{k})+f_k+\beta_kA\bar{f}_k\\
        &= -\bar{f}_k + f_k+\beta_kA\bar{f}_k,
    \end{align*}
and hence, 
\eq{eq:ADxk}
    (A\Delta x_{k},q_i) = -(\bar{f}_k,q_i)+(f_k,q_i)+\beta_k(A\bar{f}_k,q_i).
\en 
The first term on the right-hand side, $(\bar{f}_k, q_i)$, vanishes since
$i\leq k$. According to Lemma \ref{lemma:twoentries} the inner product
$(f_k, q_i)$ is zero for $i\leq k-2$.
In the proof of Lemma~\ref{lemma:twoentries} we showed that 
$ (\bar{f}_{j-1},Aq_i)=0$ for $i\leq j-2$, see \nref{eq:2entprf},
which means that $ (\bar{f}_{k},Aq_i)=0$ for $i\leq k-1$.
In the end  $s_{i,k+1} = -(A\Delta x_{k},q_i) =0$ for $i<k-1$
which is equivalent to the desired result. 
\end{proof}

Lemma \ref{lemma:twoentries} and Theorem \ref{thm:thetaj} show that when
AA-TGS($\infty$) is applied to solving linear symmetric problems, only the two
most recent $q_{i}$'s and $u_{i}$'s, i.e.,
$q_{j-2}, q_{j-1}$ and $u_{j-2}, u_{j-1}$ are needed to compute the
next iterate $x_{j+1}$.  In other words, the for loop in Line~6 of the algorithm
needs only to be executed for $i=j-2, $ and $i=j-1$ which means that AA-TGS(3)
is equivalent to AA-TGS($\infty$) in the linear symmetric case. Practically,
this leads to a significant reduction in  memory and computational cost. 

We saw earlier that in all cases, the full AA-TGS algorithm is equivalent to the
full-window Anderson, at least in exact arithmetic.  AA-TGS is just a different
implementation of AA in this case.  In the linear case,
Proposition~\ref{prop:opt1} states that the full AA-TGS is equivalent to (full)
GMRES followed by a Richardson step.  This led to Corollary~\ref{cor:1} which
enables us to establish convergence results by exploiting already known theory.
One specific such result is an analysis of the special case when the problem
is linear and   symmetric.

\begin{theorem}
\label{thm:convegencespd}
Assume that $A$ is symmetric positive definite and that a constant $\beta$ is used in
AA-TGS. Then  the iterate $x\up{AA-TGS}_{j+1}$ obtained  at the $(j+1)$-st step of AA-TGS($\infty$)
satisfies~:
\begin{align*}
       \Vert b - A x\up{AA-TGS}_{j+1}\Vert_2
  & \le \Vert I-\beta A \Vert_2 \frac{\Vert b - A x_0 \Vert_2}{T_{j}(\frac{\kappa +1 }{\kappa - 1})}\\
  & \le 2  \Vert I-\beta A \Vert_2 \Vert b - A x_0 \Vert_2
    \left(\frac{\sqrt{\kappa}-1}{\sqrt{\kappa}+1}\right)^{j}
\end{align*}
where $T_{j}$ is the Chebyshev polynomial of first kind of degree $j$,  
and $\kappa = \kappa(A)$ is the 2-norm condition number of $A$.
\end{theorem}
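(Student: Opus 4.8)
The plan is to chain two ingredients that are already in place: the residual inequality of Corollary~\ref{cor:1}, which reduces the AA-TGS residual to a GMRES residual, and the classical Chebyshev convergence bound for GMRES in the symmetric positive definite case. First I would invoke Corollary~\ref{cor:1} to write
\[
\Vert b - A x\up{AA-TGS}_{j+1}\Vert_2 \le \Vert I - \beta A\Vert_2 \ \Vert b - A x\up{GMRES}_{j}\Vert_2 ,
\]
so that the whole task reduces to bounding the residual norm of the $j$-th GMRES iterate started from the same $x_0$. The identification of $\bar x_j$ with the GMRES iterate and of $\bar f_j$ with its residual, which makes this step legitimate, is exactly the content of Theorem~\ref{lem:opt1} together with Proposition~\ref{prop:opt1}.

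Next I would recall the polynomial characterization of the GMRES residual. Since GMRES minimizes $\Vert b - A x\Vert_2$ over $x_0 + \K_j(A,r_0)$ with $r_0 = f_0 = b - A x_0$, its residual is $p(A) r_0$ for an optimal polynomial $p$ of degree at most $j$ normalized by $p(0)=1$, i.e.
\[
\Vert b - A x\up{GMRES}_{j}\Vert_2 = \min_{p \in \PP_{j,0}} \Vert p(A) r_0 \Vert_2 .
\]
The place where symmetry is used is the following: because $A$ is symmetric positive definite it possesses an orthonormal eigenbasis, so $\Vert p(A)\Vert_2 = \max_i |p(\lambda_i)| \le \max_{t \in [\lambda_{min},\lambda_{max}]} |p(t)|$. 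Combining this with the previous display gives the scalar bound
\[
\Vert b - A x\up{GMRES}_{j}\Vert_2 \le \Vert r_0\Vert_2 \ \min_{p \in \PP_{j,0}} \ \max_{t \in [\lambda_{min},\lambda_{max}]} |p(t)| .
\]

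At this point I would reuse the min-max result from approximation theory already invoked in Section~\ref{sec:Cheb}: the optimal polynomial is the shifted and scaled Chebyshev polynomial, whose extremal value on $[\lambda_{min},\lambda_{max}]$ equals $1/T_j\!\left(\frac{\lambda_{max}+\lambda_{min}}{\lambda_{max}-\lambda_{min}}\right)$. Writing $\kappa = \lambda_{max}/\lambda_{min}$, the argument simplifies to $\frac{\kappa+1}{\kappa-1}$, and since $\Vert r_0\Vert_2 = \Vert b - A x_0\Vert_2$ this yields the first claimed inequality. The second inequality is then a routine estimate of the Chebyshev polynomial: from $T_j(x) = \frac12\bigl[(x+\sqrt{x^2-1})^j + (x-\sqrt{x^2-1})^j\bigr] \ge \frac12 (x + \sqrt{x^2-1})^j$ for $x \ge 1$, together with the computation that at $x = \frac{\kappa+1}{\kappa-1}$ one has $x + \sqrt{x^2-1} = \frac{\sqrt\kappa+1}{\sqrt\kappa-1}$, I obtain $1/T_j\!\left(\frac{\kappa+1}{\kappa-1}\right) \le 2\left(\frac{\sqrt\kappa-1}{\sqrt\kappa+1}\right)^{j}$.

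The only genuinely delicate point is the passage from the matrix minimization $\min_p \Vert p(A) r_0\Vert_2$ to the scalar min-max over the spectral interval; this is precisely where positive definiteness (hence normality and the clean identity $\Vert p(A)\Vert_2 = \max_i |p(\lambda_i)|$) is indispensable, and it is the reason the statement is confined to SPD $A$ rather than to an arbitrary invertible matrix, for which the reduction of Corollary~\ref{cor:1} still holds but GMRES no longer enjoys such a transparent bound. Everything else is the standard Chebyshev machinery already assembled earlier in the paper, so I expect no additional obstacle.
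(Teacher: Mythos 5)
Your proposal is correct and follows essentially the same route as the paper: both reduce the AA-TGS residual to the $j$-th GMRES residual via Corollary~\ref{cor:1} and then apply the classical Chebyshev min-max bound, with $\frac{\kappa+1}{\kappa-1}$ as the argument and the identity $x+\sqrt{x^2-1} = \frac{\sqrt{\kappa}+1}{\sqrt{\kappa}-1}$ giving the second inequality. The only difference is cosmetic: the paper outsources the GMRES bound to a citation of an analysis in \cite[sec. 6.11.3]{Saad-book2}, whereas you derive it in place from the polynomial characterization $\min_{p\in\PP_{j,0}}\Vert p(A)r_0\Vert_2$ and the normality of SPD $A$ -- a sound and self-contained filling-in of the same argument.
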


\begin{proof}
  We start from inequality \nref{eq:aatgs-mr} of Corollary~\ref{cor:1}.
  An analysis similar to that in \cite[sec. 6.11.3]{Saad-book2} for the CG method,
  will show that
\[
  \| r_{j}^{GMRES}\|_2 \le \frac{\| r_0 \|_2 }{T_j (1 + 2 \eta )}
\]
in which $\eta = \lambda_{min} /(\lambda_{max} - \lambda_{min}) = 1/(\kappa - 1)$.
Noting that $1+2 \eta = (\kappa + 1)/(\kappa - 1)$  establishes the first inequality. The second one follows from
using standard expressions of the Chebyshev polynomials based on the hyperbolic cosine 
\cite[p. 204-205]{Saad-book2}, which shows that
\[ T_j (1 + 2 \eta) \ge \half \left(\frac{\sqrt{\kappa(A)}+1}{\sqrt{\kappa(A)}-1}\right)^{j} . \]
This completes the proof. 
\end{proof}
    
\subsubsection{Other links between AA and Krylov methods in linear
  case} \label{sec:WalkerNi} The analysis shown above establishes strong
connections between full-depth AA-TGS and GMRES.  Since AA-TGS is equivalent to
standard AA in the full-window case, these results are also valid for AA. Such
connections were established well before the recent article \cite{AATGS}.

Specifically, in 2011, \cite{WalkerNi2011} studied the algorithm
and showed a form of equivalence between AA and GMRES in the linear
case. Another study along the same lines, discussed at the end of this section,
is the 2010 article by  \cite{Haelterman-al-2010} which is
concerned with a slightly different version of AA.

Because the Walker and Ni result is somewhat different from the one of
Proposition~\ref{prop:opt1} seen in Section~\ref{sec:LnkKry}, we will now
summarize it.  The paper makes the following set of assumptions.

\paragraph{Assumption \ (A):}
\begin{itemize}
\item AA is applied for the fixed-point mapping $ g(x) = Ax + b$
\item  Anderson acceleration is not truncated, i.e., $m = \infty$ 
\item  $(I-A)$ is nonsingular.
\item  GMRES is applied to solve $(I-A)x = b$ with the same initial guess $x_0$ as for AA.
\end{itemize}

The main result of the article is stated for the formulation of AA that follows
the notation of Pulay mixing seen in Section~\ref{sec:Pulay}. Accounting for
this change of notation, their result is stated below.

\begin{theorem} Suppose that Assumption~(A)  holds and that, for some $j > 0$,
  $r_{j-1}\up{GMRES} \ne  0 $ and also that
  $\|r_{i-1}\up{GMRES}\|_2 > \|r_{i}\up{GMRES}\|_2 $ for each $i$ such that $0 < i < j$.
  Then, $\bar x_j = x_j\up{GMRES}$ and $ x_{j+1} = g(x_j\up{GMRES})$.
\end{theorem}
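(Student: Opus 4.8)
The plan is to reduce the statement to the GMRES-optimality results already established for AA-TGS in Section~\ref{sec:LnkKry}, and then read off the two conclusions. First I would record the elementary but decisive fact that, because $g(x)=Ax+b$ is \emph{affine}, the residual $f(x)=g(x)-x=b-(I-A)x$ is exactly the residual of the linear system $(I-A)x=b$. Setting $\tilde A\equiv I-A$ (nonsingular by Assumption~(A)) and noting that $f_0=b-\tilde A x_0=r_0\up{GMRES}$, Anderson acceleration applied to $g$ \emph{is} AA applied to solve $\tilde A x=b$, so every result of Section~\ref{sec:LnkKry} holds verbatim with $A$ replaced by $\tilde A$ and with the Krylov subspaces $\mathcal{K}_j(\tilde A,f_0)$ coinciding with those built by GMRES. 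Since $m=\infty$, Proposition~\ref{prop:infcase} lets me identify the full-window AA iterates with those of AA-TGS($\infty$), so I may work throughout with the latter.

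Granting for the moment that the algorithm does not break down through step $j$, the first conclusion $\bar x_j=x_j\up{GMRES}$ is then immediate from Theorem~\ref{lem:opt1}: the intermediate vector $\bar x_j$ minimizes $\|b-\tilde A x\|_2$ over the affine space $x_0+\mathcal{K}_j(\tilde A,f_0)$, which is precisely the characterization of the $j$-th GMRES iterate for $\tilde A x=b$ started at $x_0$. Supplying the missing non-breakdown property is where the monotonicity assumption enters, and it is the step I expect to be the main obstacle. The plan is an induction showing that $r_{j-1}\up{GMRES}\neq 0$ together with $\|r_{i-1}\up{GMRES}\|_2>\|r_i\up{GMRES}\|_2$ for $0<i<j$ forces $\dim\mathcal{K}_i(\tilde A,f_0)=i$ for all $i\le j$: a strictly decreasing and nonzero GMRES residual excludes a ``happy breakdown'' (which would make some $r_i\up{GMRES}$ vanish and freeze the Krylov dimension), so the subspace must grow by one at each step. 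I would then tie this dimension growth to non-breakdown of the Gram--Schmidt recurrence in Algorithm~\ref{alg:TGS} through the $\tilde A$-version of Lemma~\ref{lemma:QAU}: a breakdown at step $i$ means the orthogonalized $\Delta f_{i-1}$ vanishes, i.e.\ $s_{ii}=0$ and $q_i=-\tilde A u_i=0$, which is exactly the failure of $\mathcal{K}_i(\tilde A,f_0)$ to grow. Hence the monotonicity hypothesis guarantees $s_{ii}\neq 0$ for every $i\le j$, so $U_j$ is a genuine basis of $\mathcal{K}_j(\tilde A,f_0)$ and Theorem~\ref{lem:opt1} does apply.

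For the second conclusion I would use the Pulay/DIIS form of the method, Equation~\eqref{eq:pulay}, which is the formulation under which the theorem is stated: in the full-window case $x_{j+1}=\sum_{i=0}^{j}\theta_i\,g(x_i)$ with $\sum_{i=0}^{j}\theta_i=1$. Because $g$ is affine and the weights sum to one,
\[
x_{j+1}=\sum_{i=0}^{j}\theta_i(Ax_i+b)=A\Big(\sum_{i=0}^{j}\theta_i x_i\Big)+b=A\bar x_j+b=g(\bar x_j),
\]
so that $x_{j+1}=g(\bar x_j)$ is a purely algebraic consequence of $\sum_i\theta_i=1$, independent of the Krylov analysis. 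Combining this identity with the first conclusion yields $x_{j+1}=g(x_j\up{GMRES})$, as claimed.

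I would close by emphasizing that essentially all of the analytic content --- the optimality of $\bar x_j$ over the affine Krylov space --- is already provided by Theorem~\ref{lem:opt1}, so the only genuinely new work is the bookkeeping that converts the GMRES monotonicity hypothesis into the non-breakdown condition those earlier results presuppose; that conversion, carried out by the induction sketched above, is the crux of the argument.
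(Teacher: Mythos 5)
You should first be aware that the paper contains no proof of this statement: it is Walker and Ni's theorem, quoted (after a change of notation) from \cite{WalkerNi2011}, so your argument can only be judged on its own merits. Much of it is sound: rewriting $f(x)=g(x)-x=b-(I-A)x$ so that AA applied to $g$ is exactly the setting of Section~\ref{sec:LnkKry} with $\tilde A\equiv I-A$, passing to AA-TGS($\infty$) via Proposition~\ref{prop:infcase}, reading off $\bar x_j=x_j\up{GMRES}$ from Theorem~\ref{lem:opt1} once non-breakdown is known, and the affine identity $\sum_i\theta_i g(x_i)=g\bigl(\sum_i\theta_i x_i\bigr)=g(\bar x_j)$ for the second conclusion are all correct.

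The gap is precisely at the step you call the crux. You assert that a Gram--Schmidt breakdown at step $i$ of AA-TGS ($s_{ii}=0$) ``is exactly the failure of $\mathcal{K}_i(\tilde A,f_0)$ to grow,'' and you then derive non-breakdown from the absence of happy breakdown. This equivalence is false: breakdown means $\Delta x_{i-1}\in\Span\{\Delta x_0,\dots,\Delta x_{i-2}\}$, a statement about differences of the \emph{AA iterates}, whereas $\mathcal{K}_i(\tilde A,f_0)$ grows whenever $r_{i-1}\up{GMRES}\ne 0$, independently of anything AA does. The true enemy is GMRES \emph{stagnation}: if $\|r_{i-1}\up{GMRES}\|_2=\|r_i\up{GMRES}\|_2$ with $r_i\up{GMRES}\ne0$ for some $i<j$, then by the very correspondence being proved, $\Delta x_i=g(x_i\up{GMRES})-g(x_{i-1}\up{GMRES})=A\,(x_i\up{GMRES}-x_{i-1}\up{GMRES})=0$, the difference matrices become rank deficient, and the equivalence fails at later steps even though the Krylov space keeps growing. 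A concrete instance: $\tilde A=\bigl(\begin{smallmatrix}0&1\\ -1&0\end{smallmatrix}\bigr)$, $b=(1,0)^T$, $x_0=0$; GMRES stagnates at step $1$ ($x_1\up{GMRES}=x_0$, $r_1\up{GMRES}\ne0$), $\dim\mathcal{K}_2=2$, yet AA produces $\Delta x_1=0$ and $\bar x_2=(0,0)^T\ne x_2\up{GMRES}=(0,1)^T$. Notice also that under your reading the strict-decrease hypothesis would be redundant, since $r_{j-1}\up{GMRES}\ne0$ alone already rules out happy breakdown (GMRES residual norms never increase) --- a sign that its role has been misidentified. What is actually needed is a joint induction: assuming $\bar x_i=x_i\up{GMRES}$ and $x_{i+1}=g(x_i\up{GMRES})$ up to the current step, strict decrease combined with uniqueness of the GMRES minimizer forces $x_i\up{GMRES}-x_{i-1}\up{GMRES}$ to have a nonzero component along $\tilde A^{i-1}r_0$, hence $\Delta x_i\notin\mathcal{K}_i(\tilde A,f_0)$, so step $i+1$ does not break down and the correspondence propagates. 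That piece of the argument is missing from your proposal.
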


The  article by \cite{DegrooteQN-ILS09} described a
method called QN-ILS (`Quasi-Newton Inverse Least-Squares') which resembles AA
although it is presented as a Quasi-Newton approach (hence its name).  The
authors seemed unaware of the Anderson article and the related literature but
the spirit of their method is quite close to that of AA. In fact, the authors
even use the same formulation for their method as that of the original AA in
that they invoke the basis \nref{eq:di} to formulate their method, instead of
the bases using forward differences.  Their algorithm is viewed from the angle
of a quasi-Newton method with updates of type-II, where the inverse Jacobian is
approximated. Using our notation, the only difference with AA is that in their
case the update \nref{eq:AA1c} for the new iterate becomes:
\eq{eq:QN-ils}
x_{j+1} = \bar x_j + f_j .
\en
Thus, relative to the update equation \nref{eq:AA1c} of AA, $\beta$ is set to one
and $ \bar f_j$  is replaced by $f_j$ in QN-ILS. In the linear case,
and full window case, the two
methods are mathematically equivalent since
$\bar f_j = f_j - \FF_j \gamma\up{j}$ and the two methods will produce the same
space of approximants in the projection process, at each step.
In the nonlinear case, the
two methods will not generate the same iterates in general.

From an implementation point of view, QN-ILS is more expensive than AA. As
described in \cite{DegrooteQN-ILS09} the algorithm recomputes a new QR
factorization each time, and does not exploit any form of downdating.  The main
reason for this is that, as already mentioned, QN-ILS relies on a basis of the
form \nref{eq:di}, which changes entirely at each new step $j$. This also means
that each of the vectors $d_i = f_j - f_i$ for $i=([j-m]), \cdots, j-1$ and the
related differences $x_j - x_i$, must be recomputed at every step $j$ leading to
a substantial added cost when compared to modern implementations of AA.
Indeed, the basis of
the $\Delta x_i$'s used in the modern version of AA, requires only that we
compute the most recent pair $\Delta f_{j-1}, \Delta x_{j-1}$ since the other
needed pairs were computed in earlier steps. In AA, one column is computed and
added to $\FF_j$ and one is dropped from it (when $j > m$). Similarly for
$\XX_j$.

The article~\cite{Haelterman-al-2010} studied the method in the linear case,
and established that it is equivalent to GMRES in this situation.
This result is similar to that of \cite{WalkerNi2011},
but we need to remember that AA and QN-ILS are different in the nonlinear case.

%%    ###########
%%    \subsubsection{Relation with Krylov methods in the linear case}
    
\subsubsection{Convergence properties of AA}
    \cite{TothKelley15} proved that AA 
    is locally $r$-linearly convergent under the condition that 
    the ﬁxed point map $g$ is a contraction mapping and the coefficients in the
    linear combination remain bounded. A number of other results were proved
    under different assumptions.

    \typeout{FIX me -- which norms are used? }
    
    The article    \cite{TothKelley15} starts by considering the linear case in which
    $g(x) = M x + b$ and shows that when $M$ is contracting with $\| M \| =c<1$ then
    the iterates of Anderson acceleration applied to $g$ will converge to the fixed-point
    $x^* = (I-M)\inv b$. In addition, the residuals converge $q$-linearly to zero,
    i.e., if $f(x)=g(x)-x$ then    $\| f(x_{k+1}) \| \le c \| f(x_k)\|$.
    This is used as a starting point for proving a result in the nonlinear case.

%%    The result proved in the nonlinear case applies to a more general situation than     AA. 
    Consider the situation where AA is applied to find the fixed-point
    of a function $g$ and let $f(x) = g(x)-x$. The authors invoke
    formulation (\ref{eq:xbarP}--\ref{eq:thetaP}) because their results require to make
    assumptions on 
    the coefficients $\theta_i$. With this in mind, their main result can be stated as follows.

    \begin{theorem}[\cite{TothKelley15}, Theorem 2.3]
      Assume that:
    \begin{enumerate}
    \item There is constant $\mu_{\theta}$ such that       $ \sum_{i=j- j_m}^j |\theta_i| \le \mu_{\theta} $
      for all $j \ge 0$.
    \item There is an $x^*$ such that $f(x^*) = g(x^*)-x^* = 0$.
    \item $g$ is Lipschitz continuously differentiable in the ball
      $\mathcal{B} (\hat \rho) = \{ x \ | \ \| x - x^* \| \le  \hat \rho \}$.
    \item There is a $c \ \in (0, 1)$ such that 
      $\| g(u) - g(v) \| \le c \|u-v\|$ \ for all $u,v$  in $\mathcal{B} (\hat \rho) $.
    \end{enumerate}
    Let  $c < \hat c < 1$.     Then  if $x_0$  is
sufficiently close to $x^*$, the Anderson iteration converges to $x^*$ r-linearly with r-factor
no greater than $\hat c$. Specifically: 
\eq{eq:TK1} f(x_k) \le  {\hat c}^k  f(x_0) , \en 
and
\eq{eq:TK2} \| x_k - u^* \|  \le  \frac{1+c}{1-c} \hat c^k  \| x_0 - u^* \| .
\en
\end{theorem}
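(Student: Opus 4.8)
The plan is to establish the residual bound \nref{eq:TK1} first and to deduce the error bound \nref{eq:TK2} from it almost for free. Assumption~(4) gives a two-sided comparison between residual norm and distance to the fixed point: writing $x-x^* = (x-g(x)) + (g(x)-g(x^*))$ and using $\|g(x)-g(x^*)\|\le c\|x-x^*\|$ yields $(1-c)\|x-x^*\| \le \|f(x)\| \le (1+c)\|x-x^*\|$ for every $x\in\mathcal{B}(\hat\rho)$. Granting \nref{eq:TK1}, the left inequality at $x_k$ together with the right inequality at $x_0$ gives $\|x_k-x^*\| \le \frac{1}{1-c}\|f(x_k)\| \le \frac{1}{1-c}\hat c^{\,k}\|f(x_0)\| \le \frac{1+c}{1-c}\hat c^{\,k}\|x_0-x^*\|$, which is exactly \nref{eq:TK2}. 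So the whole task is \nref{eq:TK1}.

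For \nref{eq:TK1} I would induct on $k$, maintaining simultaneously that every iterate stays in $\mathcal{B}(\hat\rho)$ and that $\|f(x_i)\|\le \hat c^{\,i}\|f(x_0)\|$ for $i\le k$. The engine is a one-step identity got by comparing $f$ with its linearization at $x^*$. Set $M=g'(x^*)$ (so $\|M\|\le c$ by~(4)) and let $L$ be the Lipschitz constant of $g'$ from~(3); the integral form of Taylor's theorem gives $\|g(u)-g(v)-M(u-v)\| \le L\max(\|u-x^*\|,\|v-x^*\|)\|u-v\|$, and in particular $\|\Delta(x,x^*)\|\le L\|x-x^*\|^2$ for $\Delta(u,v)\equiv g(u)-g(v)-M(u-v)$, with $f(x)-(M-I)(x-x^*)=\Delta(x,x^*)$. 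Because Toth--Kelley use the constrained formulation (\ref{eq:xbarP}--\ref{eq:thetaP}), the update is the Pulay form $x_{j+1}=\sum_{i=[j-m]}^{j}\theta_i g(x_i)=\bar x_j+\bar f_j$ (i.e.\ $\beta=1$), and since $\sum_i\theta_i=1$ the linear model commutes with the Anderson averaging. Substituting the model into $\bar f_j=\sum_i\theta_i f(x_i)$ and into $x_{j+1}=\bar x_j+\bar f_j$ collapses the first-order part and leaves
\[
f(x_{j+1}) = M\bar f_j + R_j, \qquad R_j = \Delta(x_{j+1},x^*) - \sum_{i=[j-m]}^{j}\theta_i\,\Delta(x_i,x^*) .
\]
Dropping $R_j$ recovers the affine case $f(x_{j+1})=M\bar f_j$, where the feasibility of $\theta=e_j$ in \nref{eq:thetaP} gives $\|\bar f_j\|\le\|f_j\|$, hence $\|f(x_{j+1})\|\le c\|f(x_j)\|$: the clean $q$-linear rate $c$.

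The substance is controlling $R_j$. Using the Taylor estimate and assumption~(1), $\|R_j\|\le L\big(\|x_{j+1}-x^*\|^2 + \mu_\theta\max_{[j-m]\le i\le j}\|x_i-x^*\|^2\big)$, with $\|x_{j+1}-x^*\|$ first bounded through $x_{j+1}=\bar x_j+\bar f_j$ in terms of the window. Each $\|x_i-x^*\|$ is at most $\frac{1}{1-c}\hat c^{\,i}\|f(x_0)\|$ by the induction hypothesis, and the decisive observation is that over the active window the largest such term is the \emph{oldest} one, $x_{[j-m]}$, so $\|R_j\|$ is $O(\hat c^{\,2[j-m]}\|f(x_0)\|^2)$: the remainder itself decays geometrically, at roughly twice the rate. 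Comparing with the target increment $(\hat c-c)\hat c^{\,j}\|f(x_0)\|$ and using $\hat c^{\,2(j-m)}\le \hat c^{\,-2m}\hat c^{\,j}$, the remainder is absorbed whenever $\|f(x_0)\|$ (equivalently $\|x_0-x^*\|$) lies below a threshold of order $(\hat c-c)\hat c^{\,2m}(1-c)^2/(L\mu_\theta^2)$. With that smallness one gets $\|f(x_{j+1})\| \le c\hat c^{\,j}\|f(x_0)\| + (\hat c-c)\hat c^{\,j}\|f(x_0)\| = \hat c^{\,j+1}\|f(x_0)\|$, closing the induction, and the same smallness keeps $x_{j+1}\in\mathcal{B}(\hat\rho)$ so that (3)--(4) stay available.

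I expect the main obstacle to be precisely this last bookkeeping: the quadratic remainder $R_j$ couples the whole window of past iterates rather than $x_j$ alone, which is exactly why the conclusion is only $r$-linear — a decaying envelope $\hat c^{\,k}\|f(x_0)\|$ — and not a step-by-step $q$-linear contraction. Making the constants explicit, tracking how $\mu_\theta$, the window size $m$, the Lipschitz constant $L$, and the gap $\hat c-c$ combine into the admissible radius, and verifying that the iterates never leave $\mathcal{B}(\hat\rho)$, is the delicate part; by contrast the algebraic identity for $f(x_{j+1})$ and the feasibility bound $\|\bar f_j\|\le\|f_j\|$ are routine.
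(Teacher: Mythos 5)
This theorem appears in the paper only as a quotation of \cite{TothKelley15}; the paper itself gives no proof, so the only meaningful comparison is with the argument in that reference. Your proposal is correct and reconstructs essentially that argument: the linearization identity $f(x_{j+1}) = g'(x^*)\,\bar f_j + R_j$ (valid because $\sum_i \theta_i = 1$ cancels the constant terms), the feasibility bound $\|\bar f_j\| \le \|f_j\|$ from the constrained least-squares problem, assumption (1) to control the coefficients inside $R_j$, and an induction in which the quadratic remainder, decaying like $\hat c^{\,2[j-m]}$, is absorbed into the gap $(\hat c - c)\hat c^{\,j}$ once $\|x_0 - x^*\|$ is small enough — with \nref{eq:TK2} deduced from \nref{eq:TK1} via the two-sided bound $(1-c)\|x - x^*\| \le \|f(x)\| \le (1+c)\|x - x^*\|$, which is exactly where the factor $\frac{1+c}{1-c}$ comes from.
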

Not that the result is valid for any norm not just for the case when the 2-norm
minimization is used in \nref{eq:theta}.  The first condition only states that
the coefficients $\theta_i$ resulting from the constrained least-squares problem
\nref{eq:thetaP} (or equivalently the unconstrained problem \nref{eq:theta}) all
remain bounded in magnitude. It cannot be proved that this condition will be
satisfied and the ill-conditioning of the least-squares problem may lead to
large values the the $\theta_i$'s.  However, the authors of the paper show how
to modify the standard AA scheme to enforce the boundedness of the coefficients
in practice.

In addition, \cite{TothKelley15} consider the particular case when the window size is $m=1$ and show that
in this situation the coefficients $\theta_i$ are bounded if $c$ is small enough that
$\hat c \equiv ((3c-c^2)/(1-c))<1$. If this condition is satisfied and if
$x_0 \ \in \ \mathcal{B}(\hat \rho)$ then they show that AA(1) with least-squares optimization
converges q-linearly with q-factor $\hat c$.

Even though these results are proved under somewhat restrictive assumptions they nevertheless
establish strong theoretical convergence properties. In particular, the results show that
under certain conditions, the AA-accelerated iterates will converge to the solution at least as
fast as the  original fixed-point sequence.
%% In 2019 P., Rebholz, Xiao; 2020 Evans, P. Rebholz, Xiao, acceleration theory to
%% establish local improvement of the convergence rate for linearly converging
%% fixed-point iterations

The theory in the Toth-Kelley article does not prove that the convergence of an
AA accelerated sequence will be faster than that of the original fixed-point
iteration. %%, only that it is (on average) at least as good.
The article \cite{EvansPollock20}
addresses this issue by showing theoretically that Anderson acceleration (AA)
does improve the convergence rate of contractive fixed-point iterations in the
vicinity of the fixed-point.
\iffalse ``The authors relate the differences of
consecutive iterates to residuals based on performing the inner-optimization in
a Hilbert space setting, and explicitly define the gain in the optimization
stage to be the ratio of improvement over a step of the unaccelerated
fixed-point iteration.''  The main result is that AA improves the convergence
rate of a fixed-point iteration to first order by a factor of the gain at each
step.  While the acceleration reduces the contribution from the first-order
terms in the residual expansion, additional superlinear terms arise.
\fi
Their experiments illustrate the improved linear convergence rates. However,
they also show that when the initial fixed-point iteration converges
quadratically, then its convergence is slowed by the AA scheme.

In another paper \cite{PollockRebholz21} discuss further theoretical aspects of
the AA algorithm  and show a number of strategies to improve
convergence. These include techniques for adapting the window-size $m$
dynamically, as well as filtering out columns of $\FF_j$ when linear dependence
is detected.  Along the same lines, building on work by \cite{Rohwedder-Phd},
\cite{Stefano22} present a \emph{stabilized version of AA} which examines the
linear independence of the latest $\Delta f_j$ from previous differences. The
main idea is to ensure that we keep a subset of the differences that are
\emph{sufficiently linearly independent} for the projection process needed to
solve the least-squares problem. Local convergence properties are proved under
some assumptions.

It has been observed that AA works fairly well in practice especially in the
situation when the underlying fixed-point iteration that is accelerated has
adequate convergence properties.  However, without any modifications, it is not
possible to guarantee that the method will converge. A few papers address this
`global convergence' issue.  \cite{JunziBoyd20} consider `safeguarding
strategies' to ensure global convergence of type-I AA methods. Their technique
assumes that the underlying fixed-point mapping $g$ is non-expansive and,
adopting a multisecant viewpoint, develop a Type-I based AA update whereby the
focus is to approximate the Jacobian instead of its inverse as is done in
AA. Their main scheme relies on two ingredients.  The first is to add a
regularization of the approximate Jacobian to deal with the potential
(near)-singularity of the approximate Jacobian. The second is to interleave the
AA scheme with a linear mixing scheme of the form \nref{eq:mixing2}.  This is
done in order to `safeguard the decrease in the residual norms.'

\section{Nonlinear Truncated GCR}\label{sec:nlKrylov}
 Krylov accelerators for linear systems, which were reviewed in
 Section~\ref{sec:Krylov}, can be adapted in a number of ways for nonlinear
 problems.  We already noted that AA can be viewed as a modified Krylov subspace
 method in the linear case.  We also showed strong links between Krylov methods
 and a few extrapolation techniques in Section~\ref{sec:extrapol}.  One way to
 uncover generalizations of Krylov methods for nonlinear equations, is to take a
 multisecant viewpoint. The process begins with a subspace spanned by a set of
 vectors $ \{ p_{[j-m+1]}, p_{[j-m+1]+1}, \cdots, p_j \}$ -- typically related to a
 Krylov subspace -- and finds an approximation to the Jacobian or its inverse, when
 it is restricted to this subspace.  This second step can take different forms
 but it is typically expressed as a multisecant requirement, whereby a set
 vectors $v_i$ are coupled to the vectors $p_i$'s such that \eq{eq:JPj} v_i
 \approx J(x_i) p_i , \en where $J(x_i)$ is the Jacobian of $f$ at
 $x_i$. Observe that a different Jacobian is involved for each index $i$.  There are
 a number of variations to this scheme.  For example, $J(x_i)$ can be replaced
 by a fixed Jacobian at some other point, e.g., $x_{[j-m+1]} $ or $x_0$ as in
 Inexact Newton methods.
 
 We will say that the two sets:
 \eq{eq:PjVjB}
 P_j = [p_{[j-m+1]}, p_{[j-m+1]+1}, \cdots, p_j ] , \qquad
 V_j = [v_{[j-m+1]}, v_{[j-m+1]+1}, \cdots, v_j ]
 \en
 are \emph{paired.}
 This setting was encountered in the linear case, see equations \nref{eq:PjVjA}, and in 
 Anderson Acceleration where
 $P_j$ was just the set $\XX_j$ and $V_j$ was $\FF_j$. Similarly,
 in AA-TGS these two sets were $U_j$ and $Q_j$ respectively.
It is possible to develop
  \emph{a broad class of multi-purpose accelerators
    with this general viewpoint.}
  %%% coupled with Krylov subspaces as in AA-TGS}.?
  One of these methods~\cite{nltgcr}, named the NonLinear Truncated
  Generalized Conjugate Residual (NLTGCR), is built as a nonlinear extension of
  the Generalized Conjugate Residual method seen in \ref{sec:MR}.
  It  is discussed next.

  \subsection{nlTGCR}\label{sec:nltgcr} 
  Recall that in the linear case, where we solve the system linear $Ax =b$, the main
  ingredient of GCR is to build two sets of paired vectors
  $\{ p_i \}$, $\{ Ap_i \}$ where the $p_i$'s
  are the search directions obtained in earlier steps and the $ Ap_i$'s are
  orthogonal to each other.  At the $j$-th step, we introduce a new pair
  $p_{j+1}, Ap_{j+1}$ to the set in which $p_{j+1}$ is initially set equal to the 
  most recent residual, See, Lines 7--12 of Algorithm~\ref{alg:gcr}. This vector is then
  $A^T A$-orthogonalized against the previous $p_i$'s.
  %% The resulting   vectors $p_i$, and $A p_i$ are paired and
  We saw that this process leads to a simple
  expression for the approximate solution, using a projection mechanism, see Lemma~\ref{lem:gcr}.

  The next question we address is how to extend GCR or its truncated version
  TGCR, to the nonlinear case.  The simplest approach is to exploit an inexact
  Newton viewpoint in which the GCR algorithm is invoked to approximately solve
  the linear systems that arise from Newton's method.  However, this is avoided
  for a number of reasons. First, unlike quasi-Newton techniques, inexact Newton
  methods build an approximate Jacobian for the current iterate and this
  approximation is used only for the current step. In other words it is
  discarded after it is used and another one is build in the next step. This is
  to be contrasted with quasi-Newton, or multisecant approaches where these
  approximations are built gradually.  Inexact Newton methods perform best when
  the Jacobian is explicitly available or can be inexpensively approximated.  In
  such cases, it is possible to solve the system in Newton's method as
  accurately as desired leading to superlinear convergence.

  An approach that is more appealing for fixed-point iterations is to exploit the
  multisecant viewpoint sketched above, by adapting it to GCR.  At a given step
  $j$ of TGCR, we would have available the  previous directions
  $p_{[j-m+1]}, \cdots, p_j$ along with their corresponding (paired) $v_i$'s, for
  $i=[j-m+1], \cdots, j$. In the linear case, each $v_i$ equals $A p_i$. In the
  nonlinear case, we would have, instead, $v_i \approx J(x_i) p_i$.   The next
  pair $p_{j+1}, v_{j+1}$ is obtained by the update
  \eq{eq:pjp1} p_{j+1} = r_{j+1} -
  \sum_{ {i=[j-m+1]}}^{j} \beta_{ij} p_i, \quad v_{j+1} = J(x_{j+1})
  r_{j+1} - \sum_{{i=[j-m+1]}}^{j} \beta_{ij} v_i
  \en
  where the $\beta_{ij}$'s are selected so as to make $v_{j+1} $ orthonormal to the 
  previous vectors   $v_{[j-m+1]},\cdots,v_j$.
  One big difference with the linear case, is that the residual vector $r_{j+1}$ is
  now the nonlinear residual, which is $r_{j+1} = -f (x_{j+1})$.

 \typeout{ ============================CHECK indices. }

  This process  builds   a pair $(p_{j+1}, v_{j+1})$ such that $v_{j+1}$ is
  orthonormal to the previous  vectors $v_{[j-m+1]},\cdots, v_j$.
  The current `search' directions
\emph{  $\{ p_i \} $ for $i=[j-m+1],\cdots,j$} are \emph{paired} with the vectors
\emph{ $v_i \approx J(x_i) p_i $, for  $i=[j-m+1], \cdots, j$},
see, \nref{eq:PjVjB}.

%% \[ P_j = [ p_{[j-m+1]}, p_{[j-m+1]+1},\cdots, p_j],
%%% \qquad V_j = [ v_{[j-m+1]}, v_{[j-m+1]+1},\cdots, v_j]. \]

Another important difference with TGCR is that the way in which the solution is
updated in Line~6 of Algorithm~\ref{alg:gcr} is no longer valid. This is because
the second part of Lemma~\ref{lem:gcr} no longer holds in the nonlinear case.
Therefore, the update will be of  the form
$x_{j  } + P_j y_j $ where $y_j = V_j^T r_j $.
Putting these together leads to the nonlinear adaptation of GCR shown in
Algorithm~\ref{alg:nltgcr}.

%%: In Linear case or Inexact Newton case $V_j  = J P_j$ ($J$ is fixed)

%%* Here $J$   \emph{varies with iterate} 
%%$v_i \approx J(x_i) p_i$  \ \ ($ == Ap_i$ in TGCR)

%%The vectors $p_i$ and $v_i$ are \emph{`paired'} much  like
%%the vectors $\Delta f_i $ and $\Delta x_i$ of QN and AA
 
\begin{algorithm}
  \centering
\caption{nlTGCR(m)}\label{alg:nltgcr}
  \begin{algorithmic}[1]
  \State \textbf{Input}: $f(x)$,   initial  $x_0$. \\
  Set $r_0 = - f(x_0)$.
  \State Compute  $v = J(x_0)  r_0$; \ \Comment{\emph{Use Frechet}} 
  \State $v_0 = v/ \| v \|_2 $, $p_0 = r_0/ \| v \|_2 $;
  \For{$j=0,1,2,\cdots,$ } 
\State  $y_j = V_j^T r_j $ 
\State $x_{j+1} = x_j + P_j y_j$   \Comment{\emph{Scalar $\alpha_j$ becomes vector $y_j$}} \State $r_{j+1} = -f(x_{j+1}) $   \Comment{\emph{Replaces linear update: $r_{j+1} = r_j - V_j y_j$}}
\State Set: { $p := r_{j+1}$}; and  compute {$v =  J(x_{j+1})  p$} \ \ \Comment{\emph{Use Frechet}} 
\State Compute $\beta_j = V_{j}^T v $
\State $v = v - V_{j}\beta_j, \qquad p = p - P_{j}\beta_j$ 
%% \For{$i=j_m:j$}
%% \State $\beta_{ij} :=  \langle v, v_i \rangle $
%% \State $p := p - \beta_{ij} p_i$; \qquad $v := v - \beta_{ij} v_i$;
%% \EndFor
\State $p_{j+1} :=p /\| v\|_2$ ; \qquad  $v_{j+1} :=v/\|v\|_2$ ; 
\EndFor
\end{algorithmic}
\end{algorithm}

  %%% and $[j-m+1] = \max(0,j-m+1)$

%%  {nlTGCR (continued) }

The relation with Newton's method can be understood from the local  linear model
which is at the foundation of the algorithm:
\eq{eq:LinMod}
f(x_j + P_j y) \approx f(x_j) + V_j y , 
\en
which follows  from the following approximation
where the $\gamma_i$'s are  the  components of $y$
and the sum is over $i=[j-m+1]$ to $j$:
\[
  f (x_j + P_j y)
%%  \approx  f(x_j) + J(x_j) P_j y 
  \approx f(x_j) + \sum \gamma_i J(x_j) p_i
  \approx f(x_j) + \sum \gamma_i J(x_i) p_i
  \approx   f(x_j) + V_j y .
\]
The method essentially minimizes the residual norm of the linear model \nref{eq:LinMod}
  at the current step. 
  Recall that Anderson exploited a  similar local relation represented by
  \nref{eq:AALinMod} and the intermediate solution $\bar x_k$ in \nref{eq:AA1} is a
  local minimizer of the linear model.
  We will often use the notation
  \eq{eq:VjPjb}
  V_j \approx [J] P_j
  \en
  to express the relation represented by Equation~\ref{eq:JPj}.

%%%Next we discuss connections of nlTGCR algorithm with inexact and quasi-Newton
%%%%methods and analyzes its convergence.

\subsection{Linear  updates} \label{sec:linUp}
\label{sec:LinUpd}
The reader may have noticed that Algorithm~\ref{alg:nltgcr} requires two
function evaluations per step, one in Line 8 where the residual is computed and
one in Line 9 when invoking the Frechet derivative to compute $J(x_{j+1}) p$
using the formula:
\eq{eq:Frechet}
  J(x)  p \approx \frac{f(x + \eps p) - f(x)}{\eps}  \ , 
  \en
  where $\eps$ is some carefully selected small scalar.
  It is possible to avoid calculating the nonlinear residual by
  simply replacing it with its linear approximation given by expression
  \nref{eq:LinMod} from which we get,
  \[
    r_{j+1} = - f(x_j + P_j y_j)\approx - f(x_j) - V_j y_j = r_j - V_j y_j .
  \]
  Therefore, the idea of this   ``linearized
  update version'' of nlTGCR is  to replace $r_{j+1} $ in Line~8 by
  its linear  approximation   $ r_j - V_j y_j$. 
\bigskip
%%\hrule
\betab
\> 8a:  \> \>  $ r_{j+1} = r_j - V_j y_j$
\entab
\smallskip 
  
This is now a method that resembles an Inexact Newton approach.
It will be equivalent to it if we add one more modification to the scheme
namely that we omit  updating the  Jacobian in Line~9, when computing $v$.
In other words, the Jacobian 
$J(x_{j+1})$ invoked in Line 9 is  constant
and equal to $J(x_0)$ and Line 9 becomes: 
\bigskip
\betab
\> 9a. Set: { $p := r_{j+1}$}; and  compute {$v =  J(x_{0})  p$}
\entab
\smallskip 

In practice, this means that when computing the vector $v$ in Line~9 of
Algorithm~\ref{alg:nltgcr} with equation~\ref{eq:Frechet}, the vector $x$ is set
to $x_0$.  This works with restarts, i.e., when the number of steps reaches a
restart dimension, or when the linear residual has shown sufficient decrease,
$x_0$ is reset to be the latest iteration computed and a new subspace and
corresponding approximation are computed.

All this means is that with minor changes to Algorithm~\ref{alg:nltgcr} we can implement
a whole class of methods that have been thoroughly studied in the past; see e.g.,
\cite{Dembo-al,Brown-Saad,Brown-Saad2} and \cite{Eisenstat-Walker94} among others.  
Probably the most significant disadvantage of inexact Newton methods, or to be specific
Krylov-Newton methods, is that a large 
number of function evaluations may be needed to build the Krylov
subspace in order to obtain a single iterate, i.e., the  next (inexact) Newton  iterate. 
After this iterate is computed, all the information gathered at
this step, specifically  $P_k,$ and $ V_k$, is discarded. This is to be contrasted with
quasi-Newton techniques where the most recent function evaluation
contributes to building an updated approximate Jacobian.
Inexact Newton methods are most successful when the Jacobian is available, or
inexpensive to compute, and some  effective preconditioner can readily be computed.

Nevertheless, it may still be cost-effective to reduce the number of function
evaluations from two to one whenever possible. We can update the residual norm
by replacing Line~8 of Algorithm~\ref{alg:nltgcr} with the liner form (8a) when
it is deemed safe, i.e., typically after the iteration reaches a region where
the iterate is close enough to the exact solution that the linear
model~\nref{eq:LinMod} is accurate enough.  A simple strategy to employ linear
updates and  move back to using nonlinear residuals, was implemented and
tested in in~\cite{nltgcr}. It is based on probing periodically how far the
linear residual  $\tilde r_{j+1} = r_j - V_j y_j$ 
is from the actual one. Define:
\eq{eq:djadapt}
d_j = 1-\frac{(\tilde r_j, r_j)}{\|\tilde r_j\|_2 \| r_j\|_2} . 
\en
The \emph{adaptive} nlTGCR switches the linear mode on when $d_j < \tau $ and
returns to the nonlinear mode if $d_j \ge \tau $, where $\tau$ is a small
threshold parameter. In the experiments discussed in Section~\ref{sec:numer2} we
set $\tau = 0.01$.

\subsection{Nonlinear updates}\label{sec:nonlinUp}
We now consider an implementation of 
Algorithm~\ref{alg:nltgcr}  in which nonlinear residuals are computed at each step.
We can study the algorithm by establishing relations  with the linear residual
\eq{eq:LinRes} \tilde r_{j+1} = r_j - V_j y_j , \en
and the deviation between the actual residual $r_{j+1}$ and its
linear version $\tilde r_{j+1}$ at the $j+1$th iteration: 
\eq{eq:ResDev} z_{j+1} = \tilde r_{j+1} - r_{j+1} .
\en

To analyze the magnitude of $z_{j+1}$, we  define
\begin{align}
w_i & = (J(x_j) - J(x_i)) p_i \ \mbox{for} \ i=[j-m+1], \cdots, j ;  \quad \mbox{and}\quad 
W_j = [w_{[j-m+1]}, \cdots, w_j ] \label{eq:wi} \\
s_j  & = f(x_{j+1}) - f(x_j) -  J(x_j) (x_{j+1} - x_j)  \label{eq:2ndOrd} . 
\end{align} 
%% ---- delta_j --> s_j
Observe  that
\eq{eq:Jxjpi}
  J(x_j) p_i = J(x_i) p_i + w_i = v_i + w_i .
  \en
Recall  from the Taylor series expansion that $s_j$ is a second order term relative
to $\| x_{j+1}-x_j \|_2$. 
Then it can be shown \cite{nltgcr} that
  the difference $\tilde r_{j+1} - r_{j+1} $ satisfies the relation:
\eq{eq:ResDiff}
\tilde r_{j+1} - r_{j+1}  =   W_j y_j  + s_j
=   W_j V_j^T r_j  + s_j ,
\en
and therefore that:
\eq{eq:ResDiffN}
\| \tilde r_{j+1} - r_{j+1} \|_2   \le \| W_j \|_2  \ \| r_j \|_2 + \| s_j \|_2 . 
\en
 
When the process nears convergence, 
$\| W_j \|_2 \| r_j \|_2 $ is the product of two first order terms
while $s_j$ is a second
order term according to its  definition \nref{eq:2ndOrd}.
Thus, $z_{j+1}$ is a quantity of the second order.  

The  following properties of Algorithm~\ref{alg:nltgcr} are easy to establish, see
\cite{nltgcr} for the proof and other details.
We denote by $m_j$ the number of columns in $V_j$ and $P_j$,
i.e., $m_j = \min \{m, j+1\}$
\begin{proposition}\label{prop:nltgcr} 
    The following properties are satisfied by the vectors produced by
    Algorithm~\ref{alg:nltgcr}:
    %% where $e_{1,k} $ is the $1$st column of the identity matrix    of dimension $k$.

  \begin{enumerate}

  \item 
    $ ( \tilde r_{j+1}, v_i) = 0  \quad \mbox{for} \quad [j-m+1] \le i \le j$,
    i.e., $V_{j}^T \tilde r_{j+1} = 0$;

  \item 
    $ \| \tilde r_{j+1} \|_2 = \min_y \| -f(x_j) + [J] P_j y\|_2
    = \min_y \| -f(x_j) + V_j y\|_2 $;

  \item $\langle v_{j+1} , \tilde r_{j+1}\rangle
    = \left \langle v_{j+1} \ , r_j \right \rangle $;

  \item $y_j=V_{j}^T r_{j} = \langle v_{j}, \tilde r_{j} \rangle e_{m_j} - V_{j}^T z_{j}  $
    where $ e_{m_j} = [0, 0, \cdots, 1]^T \in \ \RR^{m_j}$.

\end{enumerate}

\medskip 
\end{proposition}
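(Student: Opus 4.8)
The four identities all rest on two structural facts about Algorithm~\ref{alg:nltgcr}: the columns of $V_j$ are orthonormal by construction (Lines~9--12 orthonormalize $v_{j+1}$ against $v_{[j-m+1]},\dots,v_j$, so that $V_j^T V_j = I$), and the coefficient vector is the plain projection $y_j = V_j^T r_j$ of Line~5. The plan is to establish the properties in the order stated, since each later one leans on the earlier. For property~1 I would simply compute, using the definition $\tilde r_{j+1} = r_j - V_j y_j$ from \nref{eq:LinRes},
\[
V_j^T \tilde r_{j+1} = V_j^T (r_j - V_j y_j) = V_j^T r_j - (V_j^T V_j) y_j = y_j - y_j = 0 ,
\]
where orthonormality gives $V_j^T V_j = I$ and Line~5 gives $y_j = V_j^T r_j$.

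Property~2 is then the standard least-squares / orthogonal-projection statement. Identifying $[J]P_j$ with $V_j$ through the pairing notation \nref{eq:VjPjb} and writing $-f(x_j) = r_j$, the map $y \mapsto \| r_j + V_j y\|_2$ is minimized by the orthogonal projection of $r_j$ onto $\mathrm{range}(V_j)$; since $V_j V_j^T r_j = V_j y_j$, the minimal residual is $(I - V_j V_j^T) r_j = r_j - V_j y_j = \tilde r_{j+1}$, whose norm is the asserted minimum. Property~1 is exactly the optimality (normal-equations) condition certifying this minimizer, so the two parts fit together. Property~3 uses only the orthogonalization step: from $\tilde r_{j+1} = r_j - V_j y_j$ we get $\langle v_{j+1}, \tilde r_{j+1}\rangle = \langle v_{j+1}, r_j\rangle - \langle v_{j+1}, V_j y_j\rangle$, and the second term vanishes because $V_j y_j$ lies in $\mathrm{span}\{v_{[j-m+1]},\dots,v_j\}$, to which the freshly orthonormalized $v_{j+1}$ is orthogonal.

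The one identity that needs genuine care is property~4, and I expect the index bookkeeping induced by the truncation window $[\,\cdot\,]$ to be the main obstacle. The plan is to substitute $r_j = \tilde r_j - z_j$, i.e.\ the definition \nref{eq:ResDev} read at step $j$, into $y_j = V_j^T r_j$, giving $y_j = V_j^T \tilde r_j - V_j^T z_j$, and then to show $V_j^T \tilde r_j = \langle v_j, \tilde r_j\rangle\, e_{m_j}$. For this I would invoke property~1 with $j$ replaced by $j-1$, which gives $V_{j-1}^T\tilde r_j = 0$, that is $(\tilde r_j, v_i) = 0$ for every column $v_i$ of $V_{j-1}$, namely for $[j-m] \le i \le j-1$. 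The columns of $V_j$ other than the last are indexed by $[j-m+1] \le i \le j-1$, a subrange of $[j-m]\le i\le j-1$, so every component of $V_j^T \tilde r_j$ except the last vanishes, while the last component, corresponding to the final column $v_j$, is precisely $\langle v_j, \tilde r_j\rangle$. Hence $V_j^T\tilde r_j = \langle v_j, \tilde r_j\rangle\, e_{m_j}$ and the claimed formula follows.

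The delicate point to verify explicitly is that the window indices line up so that the $(j{-}m{+}1)$-th through $(j{-}1)$-th columns genuinely fall inside the range annihilated at the previous step. Checking this against the definition $m_j = \min\{m, j+1\}$ and the meaning of $[k] = \max\{k,0\}$ is where an off-by-one error could creep in, so I would treat the early steps $j < m$ (where $[j-m+1] = 0$ and no column is yet dropped) and the steady-state steps $j \ge m$ separately to be sure the subrange inclusion holds in both regimes.
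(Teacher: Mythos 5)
Your proof is correct; note that the paper itself does not prove Proposition~\ref{prop:nltgcr} (it defers to the nlTGCR reference), and your argument — orthonormality of $V_j$, the definition $y_j = V_j^T r_j$, and Property~1 applied at step $j-1$ to get Property~4 — is the natural and evidently intended one. The only point worth making fully explicit is that $V_j^T V_j = I$ survives truncation: for window indices $i<k\le j$ one has $[k-m]\le[j-m]\le[j-m+1]\le i$, so $v_i$ was still in the window when $v_k$ was orthogonalized against it; this is the same subrange inclusion you correctly checked for Property~4, and it holds in both the $j<m$ and $j\ge m$ regimes.
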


Property (4) and equation \nref{eq:ResDiffN} suggest that  when $z_j$ is
small,
then $y_{j} $ will have small components everywhere except for the 
last component.
This happens  when the model is \emph{close to being linear} or when it is nearing
convergence,

 \subsection{Connections with  multisecant  methods} 
 The update at step $j$ of nlTGCR  can be written as follows:
\[x_{j+1} = x_j + P_j V_j^T r_j = x_j + P_j V_j^T (-f(x_j)) , \]
showing that   nlTGCR is  a multisecant-type method in which
the inverse Jacobian at step $j$, is approximated by
\eq{eq:Gj}
G_{j+1} \equiv P_j V_j^T .
\en
This approximation satisfies  the \emph{multisecant} equation
\eq{eq:msecant1} 
  G_{j+1} v_i = p_i \quad \mbox{for} \quad [j-m+1] \le i \le j.
  \en
Indeed, 
  $
  G_{j+1} v_i = P_j V_j^T v_i =  p_i =  J(x_i) \inv v_i \quad \mbox{for} \quad [j-m+1] \le i \le j.
$ 
In other words $G_{j+1}$ inverts $J(x_i)$ exactly when applied to $v_i$.

If  we substitute 
$p_{i}$ with  of $\Delta x_j$ and $v_i$ with  $\Delta f_j$ we see that 
equation~\nref{eq:msecant1}  is just  the constraint \nref{eq:secant2} we
encountered in  Broyden's second update method.
In addition, the update $G_{j+1}$ also satisfies the `no-change' condition:
\eq{eq:msecant2} 
  G_{j+1} q = 0 \quad \forall q \perp v_i  \quad \mbox{for} \quad [j-m+1] \le i \le j.
  \en
  The usual no-change condition for secant methods is of the form
  $(G_{j+1}-G_{[j-m+1]}) q = 0 $ for $q \perp \Delta f_i$ which in our case
  would become
  $(G_{j+1}-G_{[j-m+1]}) q = 0 $ for $q \perp \ v_i  \quad \mbox{for} \quad [j-m+1] \le i \le j$.
  This means that  we are in effect updating $G_{[j-m+1]} \equiv 0$.
  Interestingly, $G_{j+1}$ satisfies an optimality result that is  similar to
  that of other secant-type and multisecant-type methods.
  This is stated in the following proposition which is easy to prove,
  see \cite{nltgcr}.
  
  \begin{proposition}
  \label{prop:Boptim}
  The unique minimizer of the following optimization problem:
  \eq{eq:optB}
  \min \{  \| G \|_F , G \in \ \RR^{n \times n} \ \text{subject to: }\quad G V_j = P_j \}
  \en
  is the matrix $G_{j+1} = P_j V_j^T$.
\end{proposition}

The condition $ G_{j+1} V_j = P_j$ is the same as the multisecant condition
$ G_j \mathcal{F}_j ={\cal X}_j $ of Equation \nref{eq:mscond} discussed
when we characterized AA as a multisecant method.
%% Here    $  {\cal F}_j $and $  {\cal X}_j $ are defined in \nref{eq:dfdx}.
In addition, recall that the multisecant version of the no-change condition, as represented by
Equation~\nref{eq:NoCh2} is satisfied. This is the same as the no-change condition seen above for
nlTGCR. 
    
Therefore  we see that the two methods are quite similar in that they are both
multisecant methods defined from a pairing of two sets of vectors, $V_j, P_j$ for nlTGCR on the one hand and $\FF_j$, $\XX_j$ for AA on the other.
From this viewpoint, the two methods
differ mainly in the way in which the sets
$\FF_j / V_j$ ,  and $\XX_j / P_j $ are defined.
Let us use the more general notation
$V_j, P_j$ for both pairs of subspaces. 

In both cases, a vector $v_j$ is related to the corresponding $p_j$ by the fact
that
\eq{eq:ApproxNLT}
v_j \approx J(x_j) p_j .
\en
Looking at Line~9 of Algorithm~\ref{alg:nltgcr} indicates that,
before the orthogonalization step in nlTGCR (Lines 10-12), this 
relation becomes an equality, or aims to be close to an equality, by
employing a Frechet differentiation. Thus, the process introduces a pair
$p_j, v_j$ to the current paired  subspaces where $p_j$ is accurately mapped to
$v_j$ by $J(x_j)$, see \nref{eq:ApproxNLT}.
%% (Line 9 of Algorithm~\ref{alg:nltgcr}) forevaluating.
%%%%the right-hand side of \nref{eq:ApproxNLT}.
In the case of AA, we have  $v_j = \Delta f_{j-1} = f_j - f_{j-1} $ and write
\eq{eq:ApproxAA}
f_j \approx f_{j-1} + J(x_{j-1}) (x_j - x_{j-1}) \to
\Delta f_{j-1} \approx J(x_{j-1}) \Delta  x_{j-1} ,
\en
which is an expression of the form  \nref{eq:ApproxNLT} for index $j-1$.

An advantage of nlTGCR is that relation \nref{eq:ApproxNLT} is a \emph{more
  accurate} representation of the Jacobian than relation \nref{eq:ApproxAA},
which can be a rough approximation when $x_j$ and $x_{j-1}$ are not close to
each other.  This advantage comes at the extra cost of an additional function
evaluation, but this can be mitigated by an adaptive scheme
as was seen at the end of Section~\ref{sec:LinUpd}.

\subsection{Numerical illustration: nlTGCR and Anderson}\label{sec:numer2}
We now return to the numerical examples seen in Section~\ref{sec:numer1} to test
nlTGCR along with Anderson acceleration. As mentioned earlier we can run nlTGCR
in different modes. We can adopt a `linear' mode which is nothing but an inexact
Newton approach where the Jacobian systems are solved with a truncated GCR
method.  It is also possible to run an `adaptive' algorithm, as described
earlier - where we switch between the linear and nonlinear residual modes with
the help of a simple criterion, see the end of Section~\ref{sec:linUp}.
Figure~\ref{fig:Exp2}
reproduces the curves of \texttt{RRE(5)} and \texttt{AA(5,10)}, and \texttt{AA-TGS(5,.)}
shown in Section
\ref{sec:numer1} and add results with \texttt{nlTGCR\_Ln(5,.)}, \texttt{nlTGCR\_Ad(5,.)}, the
linear and adaptive versions of nlTGCR respectively. As before the parameter 5
for these 2 runs represents the window size. What is shown is similar to
what we saw in Section~\ref{sec:numer1} and the parameters, such as residual tolerance
maximum number of iterations, etc., are identical. 
One difference is that, because the methods perform very similarly at the beginning,
we do not plot the initial part of the curves, i.e., we omit 
points for which the number of function evaluations is less than 100.

  \begin{figure}[hbt]
  \includegraphics[width=0.505\textwidth]{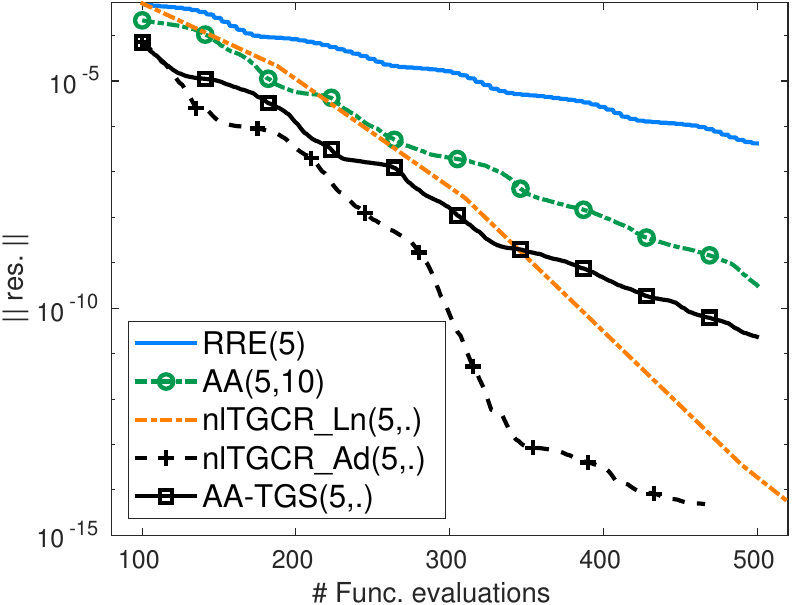}
  \includegraphics[width=0.495\textwidth]{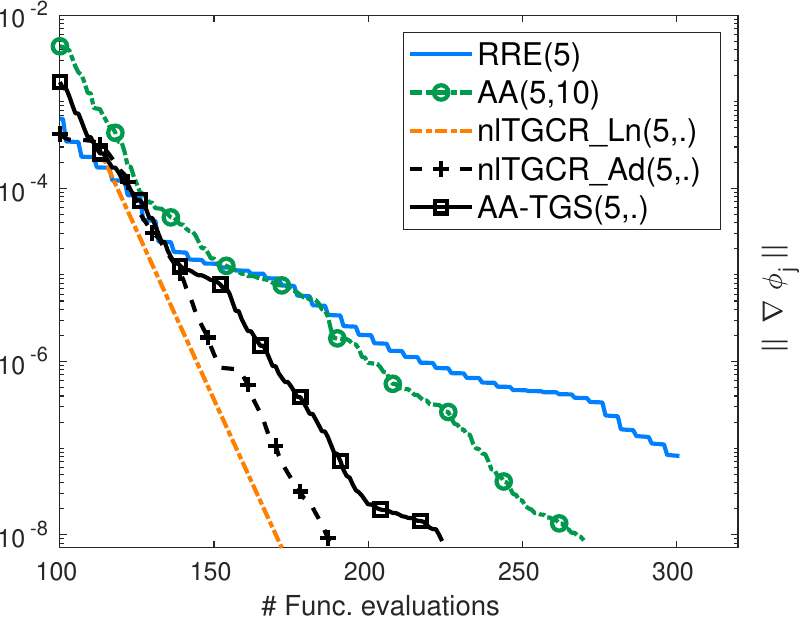}
  \caption{Comparison of 5 different methods
    for the Bratu problem (left) and the Lennard-Jones
    optimization problem (right).\label{fig:Exp2}}
  \end{figure}

  Because \texttt{nlTGCR\_Ln} is in effect an inexact Newton method, one can
  expect a superlinear convergence if a proper strategy is adopted when solving
  the Jacobian systems. These systems are solved inexactly by requiring that we
  reduce the (linear) residual norm by a certain tolerance $\tau$ where $\tau$
  is adapted.  Table~\ref{tab:tabL2} illustrates the superlinear convergence of
  the Linear, i.e., inexact Newton version of nlTGCR, as observed for the
  Lennard-Jones problem. The algorithm takes 10 outer (Newton) iterations to
  converge but we only show the last four iterations (as indicated by the column
  'its'). The second column shows the progress of the norm of the gradient,
  which is nearly quadratic as can be seen.  The 3rd column shows the number of
  inner steps needed to reduce the residual norm by $\eta$ at the given Newton
  step, where $\eta$ is shown in the 4th column.  This tolerance parameter
  $\eta$ is determined according to the Eisenstat-Walker update, see
  \cite{Kelley-book} for details. This update scheme works by trying to produce
  a quadratically decreasing residual based on gains made in the previous step.

  \begin{table}[hbt]
  \begin{center}
  \begin{tabular}{r|r|c|r}
    \multicolumn{1}{c}{its}&   \multicolumn{1}{c}{$\| \grad E \|_2 $}&
   \multicolumn{1}{c}{inner}&\multicolumn{1}{c}{$\eta$ }  \\ \hline 
%%     its   val of pot.    || ||          ninner next eta    
       7  &   1.894e+00  &   10 & 4.547e-02 \\
       8  &   6.376e-02  &   22 & 1.337e-02 \\
       9  &   3.282e-04  &   47 & 1.020e-03 \\
      10  &   7.052e-09  &   59 & 2.386e-05 \\ 
    \end{tabular} 
\end{center}
\caption{The superlinear convergence of nlTGCR\_Ln for the Lennard-Jones example.}\label{tab:tabL2}
\end{table}

%%!!!!! ===>> proof-read from here.

\section{Acceleration methods for machine learning}\label{sec:stoch}
We conclude this article with a few preliminary thoughts on how acceleration
methods might be put to work in a world that is increasingly driven by Machine
Learning (ML) and Artificial Intelligence (AI).  Training deep neural networks
models is accomplished with one of a number of known iterative procedures. What
sets these procedures apart from their counterparts in physical simulations is
their reliance on stochastic approaches that exploit large datasets. This
presents a completely new landscape for acceleration methods, one for which they
were not originally designed. It is too early to definitively say whether or not
acceleration methods will be broadly adopted for ML/AI optimization, but it is
certainly time to start investigating what modifications to traditional
acceleration approaches might be required to  deploy them successfully in this context.

\iffalse

== outline: SGD, batching,

== momentum, adagrad, RMSprop, adam [very briefly]

== what are the impediments -- some tests

DUAL VIEW: PARAMS vs. SAMPLES

\cite{Sun2020}  \cite{you2019large}
%% Main points: in deep learning we are in a different world.

%% Not meant to provide a survey - rather some preliminary thoughts on the
%%subject.

\fi

Training an AI model is highly demanding, both in terms of memory and
computational power.  The idea of resorting to acceleration in deep learning is
a rather natural one when considering that standard approaches may require tens
of thousands of iterations to converge.  Anderson acceleration for deep learning
tasks was discussed in a number of recent articles, see, e.g.
\cite{pasini2021stable,PasiniAA4DNN,sun2021damped,Shi2019RegularizedAA}, among
others.  Most of these papers advocate some form of regularization to cope with
the varying/ stochastic nature of the optimization problem.

Before reviewing the challenges posed by stochastic techniques to acceleration
methods we briefly describe the Stochastic Gradient Descent (SGD)
algorithm, one of the simplest methods employed to train DNN models.  In spite
of its simplicity, SGD is a good representative of iterative optimization
algorithms in deep learning, because its use is rather widespread and 
because it shares the same features as those of the more advanced algorithms.

\subsection{Stochastic Gradient Descent}

%%\subsection{Stochastic Gradient-Descent  techniques}\label{sec:SGD}
The classical (deterministic) gradient descent (GD) method
for minimizing a convex function $\phi(w)$ with
respect to $w$, was mentioned in Section~\ref{sec:simpleMix}, see
Equation~\nref{eq:GD}. If  $\phi$
is differentiable the method consists of
taking the iterates: \eq{eq:gradMeth} w_{j+1} = w_j - \eta_j \nabla
\phi(w_j) ,  \en
where $\eta_j$ is a scalar termed the \emph{step-size}
or \emph{learning rate} in  machine learning. The above algorithm is well understood for
functions $\phi$ that are convex. In this situation, the step-size $\eta_j$ is 
usually determined by performing a 
line search, i.e., by selecting the scalar $\eta$ so as to  minimize, or reduce
in specific ways, the cost function  $\phi(w_j -\eta \nabla \phi(w_j))$ with respect to $\eta$.
%%This is a basic method designed for finding mimima of convex functions.
It was \cite{Cauchy47} who invented the method for solving systems of equations;
see also \cite{PetrovaOnCauchy} for additional details on the origin of the
method.

In data-related applications $w$ is a vector of weights needed to optimize a
process. This often amounts to finding the best parameters to use, say in a
classification method, so that the value of $\phi $ at sample points matches
some given result across items of a dataset according to a certain measure.
Here, we
%%We need to caution the reader that we
simplified notation by writing $\phi(w)$ instead of the more accurate
$\phi(x | w)$, which is to be read as ``the value of $\phi$ for $x$ given the
parameter set $w$''.  

In the specific context of Deep Learning, $\phi(w)$ is often the sum of a large
number of other cost functions, i.e., we often have
\eq{eq:fw} \phi(w) =
\sum_{i=1}^N \phi_i (w) \qquad \rightarrow \qquad \nabla \phi(w) = \sum_{i=1}^N
\nabla \phi_i (w) .
\en
The index $i$ here refers to the samples in the
training data.  In the simplest case where a `Mean Square Error' (MSE) cost
function is employed, we would have
\eq{eq:mse} \phi(w) = \sum_{i=1}^n \| \hat
y_i - \phi_w(x_i)\|_2^2
\en
where $\hat y_i$ is the target value for item $x_i$
and $ y_i = \phi_w(x_i) $ is the result of the model for $x_i$ given the weights
represented by $w$.
  
Stochastic Gradient Descent (SGD) methods are designed specifically for the
common case where $\phi(w)$ is of the form \nref{eq:fw}. It is usually expensive
to compute $\nabla \phi$ but inexpensive to compute a component
$\nabla \phi_i (w)$. As a result, the idea is to replace the gradient
$\nabla \phi(w) $ in the gradient descent method by $\nabla \phi_{i_j} (w_j)$
where $i_j$ is drawn at random at step $j$. The result is an iteration of the type:
\eq{eq:SGD} w_{j+1} = w_j -\eta_j \nabla \phi_{i_j} (w_j) \ .
\en
The step-size $\eta_j$, called the `learning rate' in this context,
is rarely selected  by a linesearch but it is determined adaptively
or  set to a constant.   Convergence results for SGD have been established 
in the convex case
\cite{Bubeck14,RobbinsMunro51,hardt16,gower2019sgd}.
The main point, going back to the seminal work by 
\cite{RobbinsMunro51}, is that
when gradients are inaccurately computed, then if they are selected from a process 
whose noise has a mean of zero then the process will converge in
probability to the root.
%%then as long as they are correct
%%on average, a gradient-type approach will eventually lead

A straightforward SGD approach that uses a single function $\phi_{i_j}$ at a time
is seldom used in practice because this typically results in a convergence that
is too slow. A common middle ground solution between the one-subfunction SGD and
the full Gradient Descent algorithm is to resort to \emph{mini-batching}.  In
short the idea consists of replacing the single function $\phi_{i_j} $ by an
average of few such functions, again drawn at random from the full set.

\iffalse 
========= batches

Gradient descent: 
   {$ w_{j+1} = w_j - \eta \nabla f(w_j)$} 
     $\quad \eta$ = 'learning rate' 

  *  Often in ML: $f(w) = \frac{1}{N} \sum_{i=1}^N \phi_i (w) \qquad \rightarrow \qquad
\nabla f(w) = \frac{1}{N} \sum_{i=1}^N  \nabla \phi_i (w) .  $

  *  Expensive + impractical to compute and use

  \fi

  Thus, mini-batching begins with  partitioning
  the set $\{1,2,\cdots,N\}$ into $n_B$ `mini-batches'
  $\calB_j, j=1, \cdots, n_B$ where
  \[
    \bigcup_{j=1}^{n_B} \calB_j =  \{1, 2, \cdots, N\} \quad \mbox{with} \quad
    \calB_j \bigcap \calB_k  = \emptyset \quad \mbox{for} \quad j\ne k .      
  \]
  Here, each $\calB_j \subseteq \{1, 2, \cdots, n\}$ is a small set of indices.
  Then instead of considering a single function $\phi_i$ we will consider  
 \eq{eq:OneBatch}  \phi_{\calB_j} (w) \equiv \frac{1}{ | \calB_j | } 
 \sum_{k \in \calB_j} \phi_k(w) .
 \en
We will cycle through all mini-batches $\calB_j$ of functions at each time
performing a group-gradient step of the form: 
\eq{eq:OneEpoch}
w_{j+1} = w_j - \eta  \nabla \phi_{\calB_j} (w_j)  \quad j=1,2,\cdots, n_B .
\en
If each set $\calB_j $ is small enough computing the gradient will be manageable and
computationally efficient. 
One sweep through the whole set of functions
as in \nref{eq:OneEpoch} is termed an \emph{`epoch'}. The number of iterations of SGD and
other optimization learning in Deep Learning is often measured in terms of epochs.
It is common  to select the partition at random at each new epoch by reshuffling the set
$\{1, 2, \cdots, N\}$ and redrawing  the batches consecutively from  the resulting shuffled set.
Models can be expensive to train: the more complex models often  require
thousands or tens of thousands of epochs to converge. 

Mini-batch processing in the random fashion described above is advantageous from
a computational point of view since it typically leads to fewer sweeps through
each function to achieve convergence. It is also mandatory if we wish to avoid
reaching local minima and overfitting.  Stochastic approaches of the type just
described are at the heart of optimization techniques in deep learning.

%%===================

\subsection{Acceleration methods for deep learning: The challenges}\label{sec:chall} 
%% As was just seen, stochastic algorithms are crucial for training deep learning models,
%%benefiting both computational performance and generalization accuracy.
Suppose we want to apply some form of acceleration to the sequence generated by
the batched gradient descent iteration \nref{eq:OneEpoch}.  There is clearly an
issue in that the \emph{function changes at each step} by the nature of the
stochastic approach. Indeed, by the definition \nref{eq:OneBatch}, it is as if
we are trying to find the minimum of a new function at each new step, namely the
function \nref{eq:OneBatch} which depends on the batch $\calB_j$.
%% Acceleration
%%  methods are not meant for this situation  because they rely on smoothness
%% to extract extrapolations of the given sequences.
We could use the full
gradient which amounts to using a full batch, i.e., the whole data set, at each step.
However, it is often  argued that in deep learning an
exact minimization of the objective function using the full data-set at once is
not only difficult but also counter productive.  Indeed, mini-batching serves
other purposes than just better scalability. For example, it
helps prevent `overfitting': Using all the data samples at once is similar
to interpolating a function in the presence of noise at  all the data  points. 
Randomization also helps the process escape from bad local minima.

This brings us to the second problem namely the lack of convexity of the
objective functions invoked in deep learning. This means that all methods that
feature a second order character, such as a Quasi-Newton approach, will have
both theoretical and practical difficulties.  As was seen earlier, Anderson
Acceleration can be viewed as a secant method similar to a Quasi-Newton
approach. These methods will potentially utilize many additional vectors but
result in no or little acceleration, if not in a breakdown caused by the non SPD
nature of the Hessian.  The lack of convexity and the fact that the
problem is heavily over-parameterized means that there are many solutions to which the
algorithms can converge.  Will acceleration lead to a better solution than that of the
baseline algorithm  being accelerated? 
If we consider only the objective
function as the sole criterion, one may think that the answer is clear: 
the lower the better. However, practitioners in this field are more interested in
`generalization' or the property to obtain good classification results on data that is
not among the training data set.
The problem of generalization has been the object of  numerous studies, see, e.g.,
\cite{zhang21_understanding,lossLandscape18,WeinanELandscape,WeinanE20} among many others.
The paper \cite{zhang21_understanding} shows  by means of
experiments that looking at DL from the angle of minimizing the loss function fails
to explain generalization properties. The authors show that they can achieve a 
perfect loss of zero in training models on well-known datasets (MNIST, CIFAR10) that
have been modified by randomly changing all labels. In other words 
one can obtain parameters whose  loss function is minimum but with the worst
possible generalization since the resulting classification would be akin to assigning a random
label to each item. 
A number of other papers explore this issue further
\cite{WeinanE20,InductiveBias14,lossLandscape18,WeinanELandscape} by attempting
to explain generalization with the help of the `loss landscape', the geometry of
the loss function in high dimensional space. What can be understood from these
works is that the problem is far more complex than just minimizing a function.
There are many minima and some are better than others. A local minimum that has a smaller
loss function will not necessarily lead to better inference accuracy and
the random character  of the learning algorithms plays a central role in
achieving a good generalization.
This suggests that we should study mechanisms that incorporate or encourage randomness.
An illustration will be provided in the next section.

The third  challenge is that acceleration methods tend to be memory intensive,
requiring to store possibly tens of additional vectors to be effective.  In deep learning
this is not an affordable option. For example, a model like Chat-GPT3 has 175B
parameters while the more recent Llama3 involves 450B parameters.  This is the
main reason why simple methods like SGD or Adam \cite{Adam-paper} are favored in
this context.

\subsection{Adapting Acceleration methods for ML}
Given the discussion of the previous section one may ask what benefits can be
obtained by incorporating second-order information in stochastic methods?
Indeed, the article \cite{BottouCurtisNocedal18} discusses the applicability of
second order methods for machine learning and, citing previous work, points out
that the convergence rate of a Quasi-Newton-type ``... stochastic iteration (..)
cannot be faster than sublinear''.  However, \cite{bottou2005line} state that if
the Hessian approximation converges to the exact Hessian at the limit then the
rate of convergence of SGD is independent of the conditioning of the Hessian. In
other words, second order information makes the method ``better equipped to cope
with ill-conditioning than SGD''.  Thus, careful successive rescaling based on
(approximate) second-order derivatives has been successfully exploited to
improve convergence of stochastic approaches.

The above discussion may lead the potential researcher to dismiss all
acceleration methods for deep learning tasks. There are a few simple variations
to acceleration schemes to cope with some of 
issues raised in Section~\ref{sec:chall}.  For example, if  our goal
is to accelerate the iteration \nref{eq:OneEpoch} with a constant learning rate
$\eta$, then we could introduce inner iterations  to ``iterate within
the same batch''.  What this means is that we force the acceleration method to
act on the same batch for a given number of inner iterations. For example, if we use
RRE (see Section~\ref{sec:RRE}) we could decide to restart
every $k$ iterations, each of which is with the
same batch $\calB_j$.  When the next batch is selected, we replace the
latest iterate with the result of the accelerated sequence.
We found with simple experiments that a method like RRE will work very  poorly 
without such a scheme.

On the other hand, if we are to embrace a more randomized viewpoint, we could
adopt a mixing mechanism whereby the subspaces used in the secant equation
evolve across different batches. In other words we no longer force the accelerator to
work only on the same batch. Thus, the columns of the $\XX_j, \FF_j$ in \nref{eq:dfdx} 
of Anderson acceleration are now allowed to be associated with different batches.
In contrast with RRE, our preliminary experiments show that for AA and AA-TGS this in fact works better 
than adding an inner loop.

Here is a very simple experiment carried out with the help of the  PyTorch library
\cite{PyTorch}. We train a small Multilayer Perceptron (MLP) model,
see, e.g.,  \cite{Murphy-book}, with two hidden layers to
recognize handwritten images from the dataset MNIST (60,000 samples of images of
handwritten digits for training, 10,000 samples for testing).  We tested 4
baseline standard methods available in PyTorch: SGD, Adam, RMSprop, and
Adagrad. Each of these is then accelerated with RRE (\texttt{RRE}),
Anderson Acceleration (\texttt{AAc}),
or Anderson-TGS (\texttt{TGS}).  Here we show the results with SGD only. AA and AA-TGS
use a window size of 3 and a restart dimension of 10.  Both implement the
batch-overlapping subspaces discussed above (no inner loop).  In contrast RRE
incorporates an inner loop of 5 steps (without which it performs rather
poorly). The restart dimension for RRE is also set to 5, to match
the number of inner steps. 
We train the model 5 times using a different randomly drawn subset of
2000 items (out of the 60,000 MNIST training set) each time.  With each of the 5
runs we draw a test sample of 200 to (out of the 10,000 MNIST test set) to test
the accuracy of the trained model.  The accuracy is then averaged across the 5
runs. For each run the accuracy  is measured as just 100 times
the ratio of the  number of correctly recognized digits over the total in the test
set (200). We show the loss function for each of the training algorithms
on the left side of Figure~\ref{fig:dnn_expl}.
Two versions of the baseline algorithm (SGD) are tested  both of which use
the same learning rate $\eta = 0.001$.  The first one labeled
\texttt{SGD\_bas} is just the standard SGD while
the second  \texttt{SGD\_inn} incorporates the same inner iteration scheme as RRE
(5 inner steps). This performs slightly better than the original scheme sometimes
markedly better with the other optimizers.   

\begin{figure}[h]
  \includegraphics[width=0.49\textwidth]{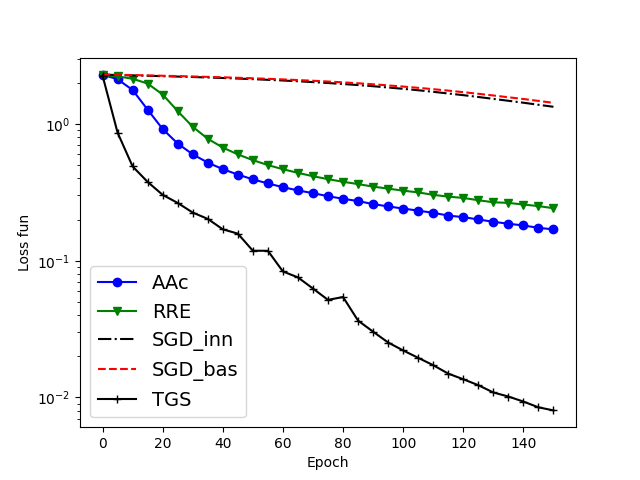}
  \hfill
  \includegraphics[width=0.49\textwidth]{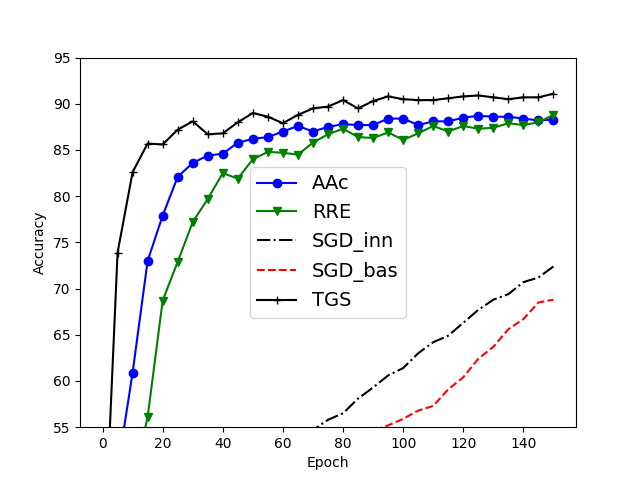}
  \caption{Comparison of various techniques for a simple MLP model on the set
    MNIST.}\label{fig:dnn_expl}
  \end{figure} 
  One can see a big improvement in the convergence of all the accelerated algorithms.
  Other tests indicated that adding an inner loop to avoid mixing  the subspaces
  from different batches is detrimental to both AA and AA-TGS, especially when
  comparing  the accuracy.
  While the improvement in accuracy is significant, we should point out that
  SGD has not yet fully converged as we limited the number of epochs to 150.
  In other tests, e.g., with \texttt{Adam}, we often saw a small
  improvement in accuracy but not as pronounced. However in all cases, the accelerated
  algorithms reach a higher precision much earlier than the baseline methods.

  The second remedy addresses the issue of memory. When training AI models, we
  are limited to a very small number of vectors that we can practically
  store. So methods like a restarted RRE, or Anderson, must utilize a very small
  window size. Note that a window size of $m$ will require a total of
  $\approx 2m $ vectors for most methods we have seen. This is the reason why a
  method like AA-TGS or nlTGCR can be beneficial in deep learning. For both
  methods, there is a simplification in the linear symmetric case as was seen
  earlier. This means that in this special situation a very small window size
  ($m=3$ for AA-TGS, $m=2$ for nlTGCR)
  \typeout{==================================================check window sizes}
  will essentially be equivalent to a window size of $m=\infty$, potentially
  leading to a big advantage.  In a general nonlinear optimization situation the
  Hessian is symmetric so when the iterates are near the optimum and a nearly
  linear regime sets in, then the process should benefit from the short-term
  recurrences of AA-TGS and nlTGCR.  In fact this may explain why AA-TGS does so
  much better at reducing the loss than AA with the same parameters in the
  previous experiment, see Figure~\ref{fig:dnn_expl}.  From this perspective,
  any iteration involving a short-term recurrence is worth exploring for Machine
  Learning.

What is clear is that acceleration methods of the type discussed in this paper
have the potential to emerge as powerful tools for training AI models.  While
adapting them to the stochastic framework poses challenges, it also offers a
promising opportunity for future research.

\bigskip
\subsection*{Acknowledgments}
This work would not have been possible without the invaluable collaborations I
have had with several colleagues and students, both past and ongoing. I would
like to extend special thanks  to the following individuals: Claude
Brezinski, Michela Redivo Zaglia, Yuanzhe Xi, Abdelkader Baggag, Ziyuan Tang,
and Tianshi Xu. I am also grateful for the support provided by the 
National Science Foundation (grant DMS-2208456).

\addcontentsline{toc}{section}{References}

\bibliography{refs}

\label{lastpage}

\end{document}